 \newtheorem{theorem}{Theorem}[section]
 \newtheorem{lemma}[theorem]{Lemma}
\newtheorem{proposition}[theorem]{Proposition}
\newtheorem{corollary}[theorem]{Corollary}
\theoremstyle{definition}
 \newtheorem{definition}[theorem]{Definition}
 \theoremstyle{remark}
 \newtheorem{remark}[theorem]{Remark}
 \numberwithin{equation}{section}
 \newenvironment{proof-sketch}{\textit{Sketch of proof.~}}{}
  \renewenvironment{proof}{\textit{Proof.~}}{}
 \newcommand{\smartqed}{}
\keywords{Nonlinear equation, Dirac operator, Hadron bag model, Soliton bag model, Friedberg-Lee model, M.I.T. bag model, Supersymmetry, Ground and excited states, Foldy-Wouthuysen transformation, Variational method, Gamma-convergence, Gradient theory of phase transitions, Concentration compactness method, Free boundary problem}
\subjclass[2010]{35J60, 35Q75, 49J45, 49Q10, 49S05, 81Q10, 81Q60, 81V05, 82B26}
\begin{document}

\title{A variational study of some hadron bag models
}


\author{Lo\"{i}c Le Treust}



\maketitle

\begin{abstract}
We study, in this paper, some relativistic hadron bag models. We prove the existence of excited state solutions in the symmetric case and of a ground state solution in the non-symmetric case for the soliton bag and the bag approximation models by concentration compactness. We show that the energy functionals of the bag approximation model are $\Gamma$-limits of sequences of soliton bag energy functionals for the ground and excited state problems. The pre-compactness, up to translation, of the sequence of ground state solutions associated with the soliton bag energy functionals in the non-symmetric case is obtained combining the $\Gamma$-convergence theory and the concentration-compactness principle. Finally, we give a rigorous proof of the original derivation of the M.I.T. bag equations via a limit of bag approximation ground state solutions in the spherical case. The supersymmetry property of the Dirac operator is a key point in many of our arguments.
\end{abstract}

\section{Introduction and main results}
Quantum chromodynamics (QCD) is the theory of strong interaction and accounts for the internal structure of hadrons. At low-energy, the quarks are bound together to form baryons (protons, neutrons) and mesons. Nevertheless, the confinement mechanism has not yet been derived from the QCD equations. In order to study the hadronic properties, physicists introduced phenomenological models approximating the QCD equations in which the quarks are confined. Among them, the M.I.T. bag \cite{MIT061974,MIT101974,MIT101975,johnson} and the bag approximation models \cite{MIT061974,MIT101975} have been set in $1974$ and the soliton bag model \cite{Friedberg1977-1,Friedberg1977-2,Lee1992} in $1977.$  

The solutions of the equations of the soliton bag and the bag approximation models are critical points of non-linear functionals involving the Dirac operator. The mathematical techniques used to solved most equations of this type are different from the ones used in a non-relativistic framework (see the review paper of Esteban, Lewin and S\'{e}r\'{e} \cite{esteban2008}). Nevertheless, in our case, the supersymmetric properties of the Dirac operator with scalar potential \cite{Thaller1992} allow us to transform a strongly indefinite variational problem into a minimization one and then to use the direct method in the calculus of variation \cite{Struwe2008}. Since the functionals associated with the ground state problems of the soliton bag and the bag approximation models without symmetries are invariant under translations, we show the existence of solutions thanks to the concentration compactness method under some restrictions on the parameters of the models. The originality of the proofs relies on the fact that the usual concentration compactness inequalities are not satisfied anymore and we have to introduce different inequalities to overcome it. We also show the existence of ground and excited state solutions under some conditions on the parameters, when the wave functions are supposed to have some symmetries. These are the first rigorous proofs of existence for these two models. Actually, solving the bag approximation model is a shape optimization problem on finite perimeter sets of $\mathbb{R}^3$ which is related to the soliton bag model thanks to the gradient theory of phase transitions \cite{modicamortola1,modicamortola2,Modica1987,Sternberg,Braides1998,braides2006}. Indeed, we show that the energy functionals of the bag approximation model are $\Gamma$-limits of sequences of soliton bag model energy functionals for the ground and excited state problems. We combine the $\Gamma-$convergence theory and the concentration compactness method to get the pre-compactness, up to translation, of the sequence of ground state solutions associated with the soliton bag energy functionals in the non-symmetric case. As in the existence results, we have to introduce concentration compactness inequalities different from the classical ones. Bucur \cite{Bucur2000},  Bucur and Giacomini \cite{Bucur2010} have already studied shape optimization problems thanks to the concentration compactness method. But, to our knowledge, this is the first result using both the gradient theory of phase transition and the concentration compactness method. We also prove the pre-compactness of the sequences of excited state solutions associated with the soliton bag energy functionals in the symmetric case. Finally, we give a rigorous proof of the original derivation of the M.I.T. bag equations done by Chodos, Jaffe, Johnson, Thorn and Weisskopf \cite{MIT061974} via a limit of bag approximation ground state solutions in the spherical case.  These are the first proofs which rigorously establish the link between the soliton bag, the bag approximation and the M.I.T. bag models.

Let us now introduce the different models we study.
\subsection{Some bag models}

\subsubsection{The soliton bag model}
This model has been introduced by Friedberg and Lee \cite{Friedberg1977-1,Friedberg1977-2} and is sometimes called the Friedberg-Lee model.

Here, we look for a solution $(\psi_1,\dots,\psi_N,\phi)$ of the following system of equations:
\begin{equation}\label{equation}
\left\{
\begin{array}{lr}
H_{0}\psi_i+g\beta\phi\psi_i =\lambda_i\psi_i\;&\forall i=1,\dots,N,\\
\|\psi_i\|_{L^2}=1\;&\forall i=1,\dots,N,\\
-\Delta\phi+U'(\phi) + \sum_{\substack{i=1}}^Ng\psi_i^*\beta\psi_i=0,\\
\end{array}\right.
\end{equation}
where $N\in\mathbb{N}\backslash\{0\},~g>0,$ $\phi:\mathbb{R}^3\rightarrow\mathbb{R}$ and for all $i\in\{1,\dots,N\},$ $\psi_i:\mathbb{R}^3\rightarrow\mathbb{C}^4$.  $H_{0}= -i\alpha .\nabla+\beta m=-i\alpha_k\partial_k+\beta m$ is the Dirac operator in the Pauli-Dirac representation (see \cite{Thaller1992}) : $ \alpha = (\alpha_1,\alpha_2,\alpha_3),$
\begin{equation*}
\beta = \left(\begin{array}{cc}I_2&0\\0&-I_2\end{array}\right), \alpha_k = \left(\begin{array}{cc}0&\boldsymbol{\sigma}_k\\\boldsymbol{\sigma}_k&0\end{array}\right), \text{~for~} k=1,2,3,
\end{equation*}
with 
\begin{equation*}
\boldsymbol{\sigma}_1 = \left(\begin{array}{cc}0&1\\1&0\end{array}\right),\boldsymbol{\sigma}_2 = \left(\begin{array}{cc}0&-i\\i&0\\\end{array}\right),\boldsymbol{\sigma}_3 = \left(\begin{array}{cc}1&0\\0&-1\end{array}\right),
\end{equation*}
and $m>0$; $X^*$ denotes the complex conjugate of $X\in\mathbb{C}^4$. We have used here Einstein's convention for the summation.

Solutions of equations \eqref{equation} are called quasi-classical \cite{Friedberg1977-1,Friedberg1977-2}.

The potentials of the Dirac operator $H_0$ of the form $\beta\phi$ are called scalar potentials.  The fact that in our problems the potential of the Dirac operator is scalar is a key point in our study. Indeed, we will see below that the scalar potentials preserve the symmetry of the spectrum with respect to $0$ that the Dirac operator has. Let us remark moreover that $g\beta\phi$ acts like a mass term \cite{Lee1992} since
\[
H_0+g\beta\phi = -i\alpha.\nabla+\beta(m+g\phi).
\]   
This kind of potential is often considered in QCD \cite{Friedberg1977-1,Friedberg1977-2,Goldflam1982,MIT061974} to model the strong interactions between quarks. From the  physical point of view,  $\phi$ is a phenomenological scalar field that models the QCD vacuum and can be viewed as a representation of the quantum excitations of the self-interacting gluon field \cite{Goldflam1982}. 

$\psi_1,\dots,$ $\psi_N$ are the wave functions of the $N$ valence quarks. $N$ is fixed at $2$ for mesons and $3$ for baryons. $g$ is the positive coupling constant between the quark and the scalar fields. $\lambda_i$ is an eigenvalue of the Dirac operator with scalar potential $H_{0}+g\beta\phi$ and represents the energy of the $i^{th}$-relativistic particle in the scalar field $\phi$, so it has to be non negative for $\psi_i$ to be a physically admissible state (see chapter 1 of \cite{Thaller1992} for a physical interpretation of the negative part of the spectrum of a Dirac operator). Some of the $N$ particles can have the same wave function and this does not necessarily contradict Pauli's exclusion principle because quarks possess others quantum numbers such as color. Let us denote by $N_0$ the maximal number of particles possible with the same wave function $\psi$. We will always assume that the number of these particles is less than $N_0.$

Physicists \cite{Saly1983,Horn1986} have already studied numerically this problem for scalar potential with radial symmetry {\it i.e.} when $\phi$ is radial. In that case, the spin-orbit operator, the $z$-component of the angular momentum operator and the Dirac operator commute altogether. Hence, we will look for eigenfunctions of the Dirac operator with spherically symmetric potential that are also eigenfunctions of the spin-orbit operator and the $z$-component of the angular momentum operator. A particular ansatz is often chosen for the four-vector wave function \cite{Saly1983,Horn1986,mathieu1984,MIT061974}
\begin{equation}\label{soler}
\psi(x) = \left(\begin{array}{c}v(r)\left(\begin{array}{c}1\\0\end{array}\right)\\iu(r)\left(\begin{array}{c}\cos\theta\\\sin\theta e^{i\varphi}\end{array}\right)\end{array}\right)
\end{equation}
that is separable in the spherical coordinates $(r,\theta,\varphi)$ of $x$. This corresponds to an eigenfunction of the spin-orbit operator of eigenvalue $-1$ and of the $z$-component of the angular momentum operator of eigenvalue $1/2$ (see \cite[Section 4.6]{Thaller1992} for complete study of the Dirac operator with spherically symmetric potential). Actually, it is commonly admitted by physicists that the ground state of many problem involving the Dirac operator has to be searched among those functions, but to our knowledge, no rigorous proof ensures it. Once this choice is made, 
\[
	\psi^*\beta \psi = v^2-u^2
\]
becomes a radial function which in turn generates a radial potential $\phi$ in equations \eqref{equation}. The ansatz \eqref{soler} is well-known in physics and has been used in particular by Soler \cite{soler1970classical} to describe elementary fermions.

We denote by $H^{1/2}_{sym}(\mathbb{R}^3,\mathbb{C}^4)$ the set of the functions $\psi$ of this type which belong to $H^{1/2}(\mathbb{R}^3,\mathbb{C}^4)$ and $H^1_{rad}(\mathbb{R}^3,\mathbb{R})$ the radial functions of $H^1(\mathbb{R}^3,\mathbb{R})$. The problem of finding a solution of \eqref{equation} when we sought the scalar potentials among spherically symmetric functions and the quarks wave functions among functions of the form \eqref{soler} will be called the symmetric problem. Whereas, when no assumption is done on the form of the solution, we will say that this is the non-symmetric problem.

Saly, Horn, Goldflam and Wilets have already  found numerical ground \cite{Saly1983,Horn1986} and excited state \cite{Saly1984} solutions for the symmetric problem.

Throughout this paper, we will assume that $U:\mathbb{R}\rightarrow\mathbb{R}$ is a non-negative $\mathcal{C}^1$ function such that $U$ and its derivative $U'$ vanish at zero and
\[\tag{H1}\label{condition:U0} |U'(x)|\leq C(|x|+|x|^p) \text{~for $x\in\mathbb{R}$ with $1<p<5$},\]
\[\tag{H2}\label{condition:U2}  U(x)\geq cx^2 \text{~for all~} x\in\mathbb{R},\]
for some positive constants $c$ and $C$.

\begin{remark}
Physically, the presence of the constant $c$ in hypothesis \eqref{condition:U2} means that the scalar field $\phi$ has a mass whose value is at least $2c$.
\end{remark}

Our problem has indeed a variational structure: we look for a critical point of the $\mathcal{C}^1$ energy functional:
\begin{eqnarray*}
	\lefteqn{\mathcal{E}(\psi_1,\dots,\psi_N,\phi)}\\
	&& = \int_{\mathbb{R}^3} \left[\left(\sum_{\substack{i=1}}^{\substack{N}}(\psi_i,H_{0}\psi_i)+g\phi(\psi_i,\beta\psi_i)\right)+ \frac{|\nabla\phi|^2}{2}+U(\phi)\right]dx
\end{eqnarray*}
on the set $\{(\psi_1,\dots,\psi_N,\phi)\in H^{1/2}(\mathbb{R}^3,\mathbb{C}^4)^N\times H^1(\mathbb{R}^3,\mathbb{R}): \|\psi_i\|_{L^2}=1\}.$ 
$\lambda_i$ are the Lagrange multipliers associated with the $\|.\|_{L^2}$-constraints and $(~.~,~.~)$ is the complex scalar product.

\begin{remark}
Condition \eqref{condition:U0} is just a mathematical constraint for $\mathcal{E}$ to be well-defined and differentiable. This does not restrict the set of admissible potentials $U$ considered by the physicists \cite{Friedberg1977-1}.
\end{remark}

\subsubsection{The bag approximation}
This model has been introduced by Chodos, Jaffe, Johnson, Thorn and Weisskopf \cite{MIT061974,MIT101974} to derive the M.I.T. bag model as a limit case.

Here, the scalar field $\phi$ of the previous model is replaced by a characteristic function $\chi_\Omega$ but it still models the cavity where the quarks are encouraged to live. 

The Lagrangian of the bag approximation is:
\[
\mathcal{F}(\psi_1,\dots,\psi_N,\chi_\Omega)=\int_{\mathbb{R}^3} \left[\sum_{\substack{i=1}}^{\substack{N}}(\psi_i,H_{0}\psi_i)-g\chi_{\Omega}(\psi_i,\beta\psi_i)\right]dx+ aP(\Omega)+b|\Omega|,
\]
for $\psi_1,\dots,\psi_N$ in $H^{1/2}(\mathbb{R}^3,\mathbb{C}^4).$  $\psi_1,\dots,\psi_N$ still represent the quark wave functions. The characteristic function $\chi_\Omega$ of $\Omega$  belongs to $\{\chi_\omega \in BV(\mathbb{R}^3,\mathbb{R})\}$.  $|\Omega|$ denotes  the area of $\Omega$ and $P(\Omega)$ its perimeter. We will write, in this paper, the variation of a function  $\phi\in BV(\mathbb{R}^3,\mathbb{R})$ on a Borel set $\mathcal{A}$ by $|\nabla \phi|(\mathcal{A}),$ so that:
\[
	|\nabla\chi_\Omega|(\mathbb{R}^3)=P(\Omega).
\]
The constants $a,b,m,g$ are positive.

We look for  critical points of $\mathcal{F}$ on the set 
\[\{(\psi_1,\dots,\psi_N,\chi_\Omega)\in H^{1/2}(\mathbb{R}^3,\mathbb{C}^4)^N\times BV(\mathbb{R}^3,\mathbb{R}): \|\psi_i\|_{L^2}=1\}\]
that is to say, points that satisfy (see \cite{Giusti1984,Pierre2005}):

\begin{equation}\label{equation:cavity}\left\{
\begin{array}{llll}
\left(H_0-g\chi_\Omega\right)\psi_i&=\lambda_i\psi_i,\;&\forall i=1,\dots,N &\text{on}~\mathbb{R}^3\\
\|\psi_i\|_{L^2}&=1,\;&\forall i=1,\dots,N\\
a\mathcal{H}_\Omega+b-\sum_{i=1}^Ng\psi_i^*\beta\psi_i&=0\;&\mathcal{H}^{2}\text{-a.e. in}~\partial^*\Omega\\
\end{array}\right.\end{equation}
where $\mathcal{H}_\Omega$ is the mean curvature of $\partial^*\Omega,$  and $\mathcal{H}^{2}$ is the two-dimensional Hausdorff measure.

\subsubsection{The M.I.T. bag model}
The M.I.T. bag model is another model where the quark wave functions are perfectly confined in a bag \cite{MIT061974,MIT101974,MIT101975,johnson}. It has been widely studied and has lead to results fitting the experiments \cite{MIT101975}.

Let us, for the moment, introduce the equations in a fixed non-empty bounded regular open set $\Omega$ of $\mathbb{R}^3.$ In this paper, we will just consider the ground state problem, so that, we look for a single function $\psi$, solution of the following problem:

\begin{equation}\label{MITbagequation}\left\{
	\begin{array}{lll}	
		H_0\psi 			&= \lambda\psi		&\text{on}~\Omega\\
		-i\beta(\alpha.n)\psi 	&= \psi			&\text{on}~\partial\Omega\\
		\|\psi\|_{L^2(\Omega)}&=1,	
	\end{array}	
	\right.	
\end{equation}
where $\psi\in H^1(\Omega,\mathbb{C}^4),$  $\lambda>m$ and $n$ is the exterior normal to $\partial\Omega.$ 

When $\Omega=B(0,R)$, we look for an eigenfunction $\psi$ in $H^1_{sym}(\Omega,\mathbb{C}^4)$ i.e. of the form \eqref{soler}. In that case, the boundary condition becomes:

\[
	\begin{array}{ll}
		u=v	&\text{on}~\partial\Omega.
	\end{array}
\]

The problem of finding a good Lagrangian formulation for these equations has been widely studied \cite{johnson,Johnson1978}. This has been a motivation for the physicists to introduce other phenomenological models like the soliton bag model of Friedberg and Lee \cite{Friedberg1977-1,Friedberg1977-2} and the fractional bag model of Mathieu and Saly \cite{mathieu1984,mathieu1985}. Balabane, Cazenave and Vazquez \cite{balabane1990} already proved the existence of compactly supported ground state solutions for this latter model thanks to a shooting method.

\subsection{Variational formulations}
The main difficulty we have to face in the soliton bag and bag approximation models, is that the functionals considered are strongly indefinite: they are neither bounded from below nor from above and their critical points have an infinite Morse index. So, for now, we do not have any satisfactory formulations of the ground and excited state problems. 

The key point to overcome this in all the models, relies on a fine study of the Dirac operator with scalar potential.
\subsubsection{The soliton bag and bag approximation case}

\begin{lemma}\label{lemma:selfadjoint}
Let $\phi$ be in $L^p(\mathbb{R}^3,\mathbb{R})$, then $H_{\phi}=H_{0}+g\beta\phi$ is a self-adjoint operator on $L^2(\mathbb{R}^3,\mathbb{C}^4)$, with domain $H^{1}(\mathbb{R}^3,\mathbb{C}^4)$ and form-domain $H^{1/2}(\mathbb{R}^3,\mathbb{C}^4)$ whenever $3\leq p<+\infty$. It satisfies:
\[\sigma_{ess}(H_{\phi}) = \sigma_{ess}(H_{0}) = (-\infty, -m]\cup[m,+\infty).\]
\end{lemma}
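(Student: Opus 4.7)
The plan is to treat $g\beta\phi$ as a symmetric perturbation of the known self-adjoint operator $H_0$, apply the Kato--Rellich theorem to get self-adjointness on $H^1(\mathbb R^3,\mathbb C^4)$, the KLMN theorem to identify the form domain $H^{1/2}(\mathbb R^3,\mathbb C^4)$, and Weyl's theorem on relatively compact perturbations for the invariance of the essential spectrum.

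First I would recall the classical fact that $H_0$ is self-adjoint on $L^2(\mathbb R^3,\mathbb C^4)$ with domain $H^1$ and form domain $H^{1/2}$, that $\sigma_{ess}(H_0)=(-\infty,-m]\cup[m,+\infty)$, and that $H_0^2=(-\Delta+m^2)I_4$, so the graph norm of $H_0$ is equivalent to $\|\cdot\|_{H^1}$ and $|H_0|=(-\Delta+m^2)^{1/2}$ has form domain $H^{1/2}$.

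The core analytic step is to prove that multiplication by $\beta\phi$ is $H_0$-bounded with relative bound $0$. Given $\varepsilon>0$, since $p<+\infty$ the density of $C_c^\infty$ in $L^p$ yields a decomposition $\phi=\phi_1+\phi_2$ with $\|\phi_1\|_{L^p}<\varepsilon$ and $\phi_2\in L^\infty$. For $\psi\in H^1$, Hölder combined with the Sobolev embedding $H^1\hookrightarrow L^q$ with $q=\frac{2p}{p-2}\in[2,6]$ — precisely where the hypothesis $p\geq 3$ enters — gives
\[
\|\beta\phi_1\psi\|_{L^2}\leq\|\phi_1\|_{L^p}\|\psi\|_{L^q}\leq C\varepsilon\,\|\psi\|_{H^1}\leq C'\varepsilon\bigl(\|H_0\psi\|_{L^2}+\|\psi\|_{L^2}\bigr),
\]
while $\|\beta\phi_2\psi\|_{L^2}\leq\|\phi_2\|_{L^\infty}\|\psi\|_{L^2}$. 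Kato--Rellich then yields self-adjointness of $H_\phi$ on $H^1$. The same Hölder--Sobolev estimate applied to the quadratic form $\langle\phi\psi,\psi\rangle$ shows it is $H_0$-form bounded with relative bound zero, and the KLMN theorem identifies the form domain of $H_\phi$ with $H^{1/2}$.

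For the essential spectrum I would invoke Weyl's theorem: it is enough that $(H_\phi-z)^{-1}-(H_0-z)^{-1}$ be compact for some $z\in\rho(H_\phi)\cap\rho(H_0)$, which by the second resolvent identity reduces to compactness of $\beta\phi(H_0-z)^{-1}$ on $L^2$. Approximating $\phi$ in $L^p$ by $\phi_n\in C_c^\infty$, each $\beta\phi_n(H_0-z)^{-1}$ maps $L^2$ continuously into a bounded subset of $H^1$ supported in a fixed compact set and is therefore compact by Rellich--Kondrachov; the Hölder--Sobolev bound of the previous step shows that $\beta(\phi-\phi_n)(H_0-z)^{-1}$ has operator norm controlled by $\|\phi-\phi_n\|_{L^p}\to 0$, so the limit is compact, and Weyl's theorem gives $\sigma_{ess}(H_\phi)=\sigma_{ess}(H_0)$. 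The main obstacle is the endpoint case $p=3$, where the Sobolev exponent saturates at $q=6$ and no interpolation margin is left; the $L^p$-density splitting is precisely what allows the relative bound to be taken arbitrarily small down to $p=3$.
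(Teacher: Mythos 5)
Your proposal is correct and follows exactly the approach the paper itself indicates: the paper gives no independent argument for this lemma but simply appeals to the Kato--Rellich theorem and Weyl's criterion with a reference to Thaller's book, and your write-up fills in precisely those standard details (the $L^p$-density splitting to get relative bound zero, KLMN for the form domain, and compactness of $\beta\phi(H_0-z)^{-1}$ by approximation for Weyl). The only tiny slip is cosmetic: since $p<+\infty$, the Sobolev exponent $q=\tfrac{2p}{p-2}$ lies in $(2,6]$ rather than $[2,6]$, which changes nothing in the argument.
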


The proof of this lemma is based on Kato-Rellich theorem and Weyl's criteria for essential spectrum and can be found in  \cite[Chapter 4]{Thaller1992}.

We denote by $E$ the Hilbert space where we search for the quark functions and $F$ the associated space for $\phi$ or $\chi_\Omega$ when no confusion is possible. $E\times F$ can be:
\[
	\begin{array}{l}
		H^{1/2}(\mathbb{R}^3,\mathbb{C}^4)\times H^1(\mathbb{R}^3,\mathbb{R}),\\
		H^{1/2}_{sym}(\mathbb{R}^3,\mathbb{C}^4)\times H^1_{rad}(\mathbb{R}^3,\mathbb{R}),\\
		H^{1/2}(\mathbb{R}^3,\mathbb{C}^4)\times \{\chi_\omega \in BV(\mathbb{R}^3,\mathbb{R})\},\\
		H^{1/2}_{sym}(\mathbb{R}^3,\mathbb{C}^4)\times \{\chi_\omega \in BV_{rad}(\mathbb{R}^3,\mathbb{R})\}.
	\end{array}
\] 
We define $E^+_{\phi} = \chi_{(0,+\infty)}(H_\phi)E$ where $\chi_{(0,+\infty)}$ is the characteristic function of $(0,+\infty)$, $d:=\dim( \ker(H_\phi))$ and for $k\in\mathbb{N}\backslash\{0\}:$
\[
	\lambda^k_+(H_\phi) := \left\{\begin{array}{ll}
		0&\text{if~} k\leq \frac{d}{2}\\
		\\
		\underset{\substack{V\subset E^+_{\phi}\\\dim V=k-\frac{d}{2}}}{\inf}~\underset{\substack{\|\psi\|_{L^2}=1\\\psi\in V}}{\sup}(\psi,H_\phi\psi)&\text{if}~k>\frac{d}{2}.
		\end{array}\right.
\]

\begin{remark}
The symmetry of the spectrum with respect to $0$ is actually true not only for the essential spectrum of Dirac operators with scalar potentials as in Lemma \ref{lemma:selfadjoint} but also for the whole spectrum. Moreover, we will see in Section \ref{secsupersym} that $d$ is pair so that the definition of $\lambda^k_+(H_\phi)$ make sense for all $k$. Then, we will get that
\[
	\{\pm\lambda^k_+(H_\phi) \}\cap(-m,m)
\]
are the eigenvalues of $H_\phi$ in $(-m,m)$ counted with multiplicity.

In this paper, the fact that the potentials are scalar is important. Indeed, the symmetry of the spectrum is not true anymore for the Dirac operator with an electric potential such as the Coulomb one \cite[Section 7.4]{Thaller1992}. This property is related to the supersymmetric operator theory which will be an essential tool in our study, for instance,  to give a simpler expression for $\lambda^k_+(H_\phi).$ 
\end{remark}

The well-defined minimization problems are then, for $1\leq k_1\leq\dots\leq k_N$ in the soliton bag model:

\begin{equation}\label{variationalformulation}
\inf\left\{\sum_{\substack{i=1}}^{\substack{N}}\lambda^{k_i}_+(H_\phi)+\int_{\mathbb{R}^3} \left[\frac{|\nabla\phi|^2}{2}+U(\phi)\right]dx
:\phi\in F\right\}
\end{equation} 
and in the bag approximation:
\begin{equation}\label{variationalformulationcavity}
\inf\left\{\sum_{\substack{i=1}}^{\substack{N}}\lambda^{k_i}_+(H_{-\chi_{\Omega}})+aP(\Omega)+b|\Omega|:\chi_{\Omega}\in F\right\}.
\end{equation} 
We get here a good formulation for the ground state problems when $k_1=\dots=k_N=1.$ The other cases are related to the exited states.

\subsubsection{The M.I.T. bag case}

Let $\Omega$ be the euclidean ball $B(0,R)$ of $\mathbb{R}^3$ with $R>0$.
Define
	\[
		\mathcal{D}(H_0) = \{\psi\in H^1_{sym}(\Omega,\mathbb{C}^4)~:~-i\beta(\alpha.n)\psi = \psi~\text{on}~\partial\Omega\}.
	\]
We call $(H_0,\mathcal{D}(H_0))$ the M.I.T. bag Dirac operator.
	
\begin{proposition}\label{prop:supersymmetryMIT1}
	The operator $(H_0,\mathcal{D}(H_0))$ is self-adjoint and there is a nondecreasing sequence of eigenvalues $(\lambda_n)_{n\geq1}\subset (m,+\infty)$ which tends to infinity such that:
	\[
		\sigma(H_0) = \{\dots,-\lambda_2,-\lambda_1\}\cup\{\lambda_1,\lambda_2,\dots\}.
	\]
	We denote for each $n,$ $ \lambda^n_{MIT}(\Omega) := \lambda_n.$
\end{proposition}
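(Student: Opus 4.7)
My plan is to reduce the eigenvalue problem on the ball to a one-dimensional radial system via the ansatz \eqref{soler} and then exploit a supersymmetric decomposition of the resulting operator. A direct computation using the Pauli-matrix identity $(\sigma\cdot\nabla)(u(r)(\sigma\cdot\hat x))=u'(r)+2u(r)/r$ converts $H_0\psi=\lambda\psi$ into the first-order system
\[
	v'=-(m+\lambda)u,\qquad u'+\tfrac{2}{r}u=(\lambda-m)v\quad\text{on }(0,R),
\]
with the MIT bag condition reducing, as the text notes, to $u(R)=v(R)$ and with $\psi\in H^1$ forcing regularity at the origin.

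Self-adjointness proceeds in two steps. For symmetry, Green's formula gives
\[
	\langle H_0\psi_1,\psi_2\rangle-\langle\psi_1,H_0\psi_2\rangle=-i\int_{\partial\Omega}\psi_1^\ast(\alpha\cdot n)\psi_2\,d\sigma,
\]
and the MIT relation $(\alpha\cdot n)\psi_j=i\beta\psi_j$ combined with $(\alpha\cdot n)\beta+\beta(\alpha\cdot n)=0$ makes the integrand vanish pointwise on $\partial\Omega$. To upgrade symmetry to self-adjointness I would show $\mathrm{Ran}(H_0\pm i)$ is dense, which in the radial formulation reduces to solving the inhomogeneous first-order system under the coupled endpoint conditions by a standard Wronskian/shooting argument. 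Discreteness of $\sigma(H_0)$ then follows from compactness of $(H_0-i)^{-1}$ via $\mathcal{D}(H_0)\hookrightarrow H^1_{sym}(\Omega,\mathbb{C}^4)\hookrightarrow L^2(\Omega,\mathbb{C}^4)$ (Rellich), and the spectral gap $|\lambda|>m$ comes from squaring: the pointwise identity $H_0^2\psi=(-\Delta+m^2)\psi$ integrated against $\psi$ gives $\lambda^2\|\psi\|^2=\|\nabla\psi\|^2+m^2\|\psi\|^2+B(\psi)$, with $B(\psi)$ the non-negative MIT boundary contribution classical in the physics literature, and $\nabla\psi\not\equiv 0$ for non-trivial eigenfunctions.

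The spectral symmetry $\sigma(H_0)=-\sigma(H_0)$ is the main obstacle and is where the supersymmetry flagged in the proposition's name enters. The idea is to exhibit an involution $J$ with $\{J,H_0\}=0$ that preserves $\mathcal{D}(H_0)$. The classical candidate is charge conjugation $C=i\beta\alpha_2 K$, which anticommutes with the free Dirac operator and, by a short computation using $\alpha_2(\alpha\cdot n)+(\alpha\cdot n)\alpha_2=2n_2$, preserves the MIT condition $(\alpha\cdot n)\psi=i\beta\psi$. The delicate point is that $C$ flips the angular quantum numbers $(\kappa,j_3)$ and thus need not preserve the ansatz \eqref{soler} pointwise; I therefore expect the proof to package $C$ with an angular reflection adapted to \eqref{soler}—or, equivalently, to write $H_0=\beta m+Q$ on $\mathcal{D}(H_0)$ with $Q$ a self-adjoint radial operator anticommuting with $\beta$ and to diagonalize via a Foldy--Wouthuysen-type transform, so that the spectrum appears as $\pm\sqrt{m^2+\sigma(Q^2)}$. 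Once this pairing is established on $\mathcal{D}(H_0)$, the symmetric enumeration $\{\pm\lambda_n\}$ with $\lambda_n\in(m,+\infty)$ and $\lambda_n\to+\infty$ follows directly from the previous two steps.
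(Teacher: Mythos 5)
Your outline correctly identifies the four ingredients (symmetry via Green's formula, essential self-adjointness, compact resolvent, spectral gap from squaring) and these first steps are workable, though they diverge from the paper's route. Where the argument breaks down is the spectral symmetry step, which you flag as the main obstacle and then leave open. You propose two resolutions, and both run into the same domain obstruction that you partially identified for the first.

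The key point you are missing is that \emph{neither} $\beta$ nor its untransformed analogue preserves $\mathcal{D}(H_0)$. Concretely: if $-i\beta(\alpha\cdot n)\psi=\psi$, then, using $\{\beta,\alpha\cdot n\}=0$, one finds $-i\beta(\alpha\cdot n)(\beta\psi)=-\beta\psi$, so $\beta\psi$ satisfies the \emph{opposite} MIT boundary condition and $\beta\mathcal{D}(H_0)\neq\mathcal{D}(H_0)$. Thus your second option—``write $H_0=\beta m+Q$ on $\mathcal{D}(H_0)$ with $Q$ anticommuting with $\beta$''—is not well posed at the operator level: $Q\beta\psi$ is undefined for $\psi\in\mathcal{D}(H_0)$ precisely because $\beta$ kicks $\psi$ out of the domain, so the Foldy--Wouthuysen diagonalization you want to invoke has no starting point. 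Your first option (charge conjugation) has the angular obstruction you already noted. In both cases the putative anticommuting involution fails to preserve either the MIT domain or the $H^1_{sym}$ subspace, and you do not supply the missing change of variables.

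What the paper actually does is conjugate $H_0$ by the unitary $T=\tfrac{1}{\sqrt{2}}\left(\begin{smallmatrix}I_2&iI_2\\iI_2&I_2\end{smallmatrix}\right)$ used throughout Section~\ref{secsupersym}. After conjugation, $Q=TH_0T^{-1}$ becomes the off-diagonal matrix $\left(\begin{smallmatrix}0&D^*\\D&0\end{smallmatrix}\right)$ with $\mathcal{D}(D)=\{\omega\in H^1_{sym}(\Omega,\mathbb{C}^2):-\sigma\cdot n\,\omega=\omega\}$ and $\mathcal{D}(D^*)$ the corresponding space with the opposite sign. The transformed domain $T\mathcal{D}(H_0)=\mathcal{D}(D)\oplus\mathcal{D}(D^*)$ has both components lying in the \emph{same} symmetric space $H^1_{sym}(\Omega,\mathbb{C}^2)$, and is therefore invariant under the block-diagonal $\beta$; hence $Q$ is genuinely a supercharge with respect to $\beta$. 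This conjugation simultaneously resolves the spectral symmetry (via $\beta Q=-Q\beta$) and delivers self-adjointness almost for free: $D$ and $D^*$ are closed, densely defined, mutually adjoint, and any off-diagonal matrix of mutually adjoint operators is automatically self-adjoint. Your Wronskian/shooting argument for range density and your squaring argument for the gap are replaced in the paper by this single structural observation together with the quadratic-form identity $\|D\omega\|_{L^2}^2=\|\sigma\cdot\nabla\omega\|_{L^2}^2+m^2+m\|\omega\|_{L^2(\partial\Omega)}^2$, which gives both the discrete min--max characterisation $\lambda_n=\lambda^n(D^*D)^{1/2}$ and the lower bound $\lambda_n>m$. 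So the gap in your proposal is concrete: you need the $T$-conjugation to make the supersymmetry act on the correct domain, and without it the anticommutation argument cannot be carried out.
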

\begin{remark}
	The main ideas of the proof of Proposition \ref{prop:supersymmetryMIT1} will be given in the second section.
\end{remark}

The variational formulation for the ground state is:
\[
	\inf\left\{N\lambda^1_{MIT}(B(0,R))+aP(B(0,R))+b|B(0,R)|:R>0\right\},
\]
where $a,b>0$ and $N\leq N_0.$

\subsection{Existence results}
\subsubsection{The soliton case}
We get the following results:
\begin{theorem}\label{maintheorem}
Let $K\in\mathbb{N}\backslash\{0\}$ and $m>0$ be fixed. Assume that $U$ satisfies hypothesis \eqref{condition:U0} and \eqref{condition:U2}. There is $g_0>0$ such that if the coupling constant $g$ satisfies $g>g_0$ then, for any $1\leq k_1\leq\dots\leq k_N\leq K$, there exists a solution 
\[(\psi_1,\dots,\psi_N,\phi)\in H^{1/2}_{sym}(\mathbb{R}^3,\mathbb{C}^4)^N\times H^1_{rad}(\mathbb{R}^3,\mathbb{R})\]
 of equations \eqref{equation} with $\lambda_i = \lambda^{k_i}_+(H_\phi)\in (0,m)$ where $\phi$ is a minimum of problem \eqref{variationalformulation}. $g_0$ depends on $N,m,K$ and $U$.
\end{theorem}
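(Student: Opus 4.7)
The plan is to reduce the coupled system \eqref{equation} to a pure minimization problem for the scalar field $\phi$ via the supersymmetric structure of $H_\phi$, then to apply the direct method of the calculus of variations in the radial setting where Sobolev embeddings are compact, and finally to use a large-coupling argument to place each eigenvalue strictly inside the spectral gap $(0,m)$. The supersymmetric rewriting is crucial: because $H_\phi$ has scalar potential $g\beta\phi$, its spectrum is symmetric about $0$, and the squares $\lambda^k_+(H_\phi)^2$ can be identified with min--max values of a nonnegative Schrödinger-type operator on two-spinors (the content of Section \ref{secsupersym}). Under the ansatz \eqref{soler} this essentially reduces to a scalar radial Schrödinger problem, from which I would extract continuity and upper semicontinuity of $\phi\mapsto \lambda^k_+(H_\phi)$ under $H^1_{rad}$-weak / $L^p$-strong convergence, valid as long as the value stays in $(0,m)$.

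Next, I would set up the reduced functional
\[
    \mathcal{I}(\phi)=\sum_{i=1}^N \lambda^{k_i}_+(H_\phi)+\int_{\mathbb{R}^3}\left[\frac{|\nabla\phi|^2}{2}+U(\phi)\right]dx
\]
on $H^1_{rad}(\mathbb{R}^3,\mathbb{R})$ and consider a minimizing sequence $(\phi_n)$. Hypothesis \eqref{condition:U2} together with the gradient term gives a uniform $H^1_{rad}$ bound (the eigenvalue sum being nonnegative). The compact radial embedding $H^1_{rad}\hookrightarrow L^p$ for $2<p<6$, combined with the growth condition \eqref{condition:U0}, yields, up to extraction, $\phi_n\rightharpoonup \phi^*$ in $H^1$, strongly in $L^p$, with lower semicontinuity of $\int U(\phi_n)$ via Fatou and of the gradient term by weak convergence. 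Combined with the semicontinuity of the eigenvalue sum this produces $\mathcal{I}(\phi^*)\leq\liminf\mathcal{I}(\phi_n)$, so $\phi^*$ is a minimizer.

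Once $\phi^*$ is in hand, I would recover \eqref{equation} as the Euler--Lagrange system: the first two lines follow because each $\lambda^{k_i}_+(H_{\phi^*})$ is then a genuine eigenvalue of $H_{\phi^*}$ realized by a normalized $\psi_i\in H^{1/2}_{sym}$; the third line comes from a Hellmann--Feynman computation for the derivative of $\lambda^{k_i}_+(H_\phi)$ with respect to $\phi$, producing the feedback term $\sum_i g\psi_i^*\beta\psi_i$ balanced against the variation of the scalar-field part.

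The main obstacle, and the reason for the hypothesis $g>g_0$, is ensuring that at the minimizer $0<\lambda^{k_i}_+(H_{\phi^*})<m$ for every $i$, so that these values are true discrete eigenvalues rather than boundary values of the essential spectrum at $\pm m$. Positivity follows from the supersymmetric symmetry of the spectrum about $0$ together with \eqref{condition:U2}, which prevents a zero eigenvalue for the nontrivial $\phi^*$. The strict upper bound is more delicate: since $\lambda^K_+$ dominates all $\lambda^{k_i}_+$ with $k_i\leq K$, it is enough to arrange $\lambda^K_+(H_{\phi^*})<m$. For this I would construct, for $g$ large, an explicit radial test function $\phi_0$ (a sufficiently deep compactly supported well) for which the supersymmetric partner $-\Delta+(m+g\phi_0)^2-m^2$ has at least $K$ negative eigenvalues; a min--max comparison then gives $\lambda^K_+(H_{\phi_0})<m-\delta$ for some $\delta>0$, and the upper semicontinuity of $\phi\mapsto\lambda^K_+(H_\phi)$ in the gap transfers this bound along the minimizing sequence and hence to $\phi^*$. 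The threshold $g_0$ is exactly what is needed to make this spectral counting on the supersymmetric partner work, which is why it depends on $N,K,m,U$.
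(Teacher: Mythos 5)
Your outline reproduces the paper's architecture: supersymmetric rewriting of $\lambda^k_+(H_\phi)$ as a min--max value of $D_\phi^* D_\phi$ (Theorem \ref{supersymmetry_foldy}), direct minimization in $H^1_{\mathrm{rad}}$ via the Strauss--Lions compact embeddings combined with convergence of the eigenvalue map along subsequences (Proposition \ref{auxiliary_convergence}), a Hellmann--Feynman computation to recover the scalar-field equation (Lemma \ref{eulerlagrange}), and a test-function construction with $m+g\phi_0\equiv 0$ on a ball to obtain the threshold estimate $0<ls_K<Nm$ for $g$ large (Lemma \ref{supersymmetry_estimationsoliton}). Two remarks on phrasing: what transfers the bound $\lambda^K_+<m$ from the minimizing sequence to $\phi^*$ is \emph{lower} semicontinuity (or the actual convergence furnished by Proposition \ref{auxiliary_convergence}), not upper semicontinuity; and the mechanism making the test-function energy drop below $Nm$ as $g\to\infty$ is that $\phi_0\equiv -m/g$ on the ball has magnitude $m/g\to 0$, so that $U(\phi_0)$ and $|\nabla\phi_0|^2$ both shrink, which is why $g_0$ must depend on $U$.

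The genuine gap is your justification that $\lambda^{k_i}_+(H_{\phi^*})>0$, i.e.\ that $0\notin\sigma(H_{\phi^*})$. You attribute this to the symmetry of $\sigma(H_\phi)$ about $0$ together with \eqref{condition:U2}. Neither fact does the job: spectral symmetry is perfectly compatible with $0$ being an eigenvalue, and \eqref{condition:U2} is a growth condition on $U$ with no bearing on $\ker H_{\phi^*}$. Indeed, in the non-symmetric Theorem \ref{maintheorem2} the paper only obtains $\lambda^1_+\in[0,m)$ and must invoke a separate condition such as $m+g\phi\geq 0$ (Lemma \ref{supersymmetry_kernel}) or the hypotheses of Corollary \ref{cor:solitongroundgeneral} to exclude $0\in\sigma(H_\phi)$. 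What makes the symmetric case stronger is the ODE structure under ansatz \eqref{soler}: the paper's Lemma \ref{simplicity} shows by a contraction/uniqueness argument for the reduced radial system in $(u,v)$ that every eigenvalue of $H_\phi$ (with $\phi\in L^{3+\epsilon}_{rad}$, which the $H^1_{\mathrm{rad}}$ minimizer satisfies) is simple, and then deduces $0\notin\sigma(H_\phi)$ by noting that a nonzero $\psi\in\ker H_\phi$ would produce two linearly independent kernel elements of the supercharge $Q_\phi$, contradicting simplicity. This same simplicity is also what makes the Kato--Rellich/Hellmann--Feynman step in Lemma \ref{eulerlagrange} well posed, since $t\mapsto\lambda^{k_i}_+(H_{\phi+t\phi'})$ need not be differentiable across a degenerate level. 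You need to supply this radial ODE simplicity argument; supersymmetry and \eqref{condition:U2} alone do not close the positivity step.
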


Let us make some comments:
\begin{remark}
Friedberg and Lee \cite{Friedberg1977-1} derived some conditions on $m,~g$ and $U$ comparable to ours for the model to have soliton solutions. 
\end{remark}
\begin{remark}
They also assumed $U$ to be a non-negative polynomial of degree $4$ with two minima at $0$ and $-\phi_0<0$ such that $0=U(0)\leq U(-\phi_0).$ In many of their proofs, they considered condition \eqref{condition:U2} true. Nevertheless, most of the numerical works were performed by Saly, Horn, Goldflam and Wilets \cite{Saly1983,Saly1984,Horn1986} with $U(-\phi_0)=0$. Actually, the symmetry $U(0)=U(-\phi_0)$ seems not to prevent the scalar field to tend to $-\phi_0$ at infinity and this leads to some mathematical complications in the minimization. However, $c$ can be chosen as small as we want.
\end{remark}
This is the first rigorous proof of the existence of ground and excited states for wave functions of form \eqref{soler}. The symmetry of the functions leads to the compactness properties established by Strauss \cite{Strauss} and Lions \cite{Lions1982}. As we remark before, no result ensures that the ground state has to possess such a symmetry. So, in Theorem \ref{maintheorem2}, we prove the existence of a ground state with no assumption made on the form of the quark wave function with the help of the concentration compactness method.
\begin{theorem}\label{maintheorem2}
Let $m>0$ be fixed. Assume that $U$ satisfies hypothesis \eqref{condition:U0} and \eqref{condition:U2}. There is $g_0>0$ such that if the coupling constant $g$ satisfies $g>g_0$ then, there exist  
\[
(\psi,\phi)\in H^{1/2}(\mathbb{R}^3,\mathbb{C}^4)\times H^1(\mathbb{R}^3,\mathbb{R})
\]
such that $\phi$ is a minimum of problem \eqref{variationalformulation} for $k_1=\dots=k_N=1$ and
\begin{equation*}
\left\{
\begin{array}{lr}
H_{0}\psi+g\beta\phi\psi =\lambda\psi\\
\|\psi\|_{L^2}=1\\
\end{array}\right.
\end{equation*}
where $\lambda= \lambda^{1}_+(H_\phi)\in [0,m)$.  $g_0$ depends on $N,m$ and $U$.

If $0\notin\sigma(H_\phi)$, then $\phi$ satisfies
\[
-\Delta\phi+U'(\phi) + Ng\psi^*\beta\psi=0.
\]

\end{theorem}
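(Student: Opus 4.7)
The plan is to obtain $\phi$ as a minimizer of
\[
\mathcal{J}(\phi) := N\lambda^1_+(H_\phi) + \int_{\mathbb{R}^3}\left[\frac{|\nabla\phi|^2}{2}+U(\phi)\right]dx
\]
over $\phi\in H^1(\mathbb{R}^3,\mathbb{R})$, and then to extract $\psi$ as a normalized eigenfunction of $H_\phi$ realizing $\lambda^1_+(H_\phi)$. Hypothesis \eqref{condition:U2} together with the nonnegativity of $\lambda^1_+$ (from the supersymmetric reformulation announced in the introduction) immediately yields coercivity, $\mathcal{J}(\phi)\geq c'\|\phi\|_{H^1}^2$, so any minimizing sequence is bounded in $H^1$. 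As the functional is translation invariant, Lions' concentration-compactness principle is the natural tool.

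Before running concentration-compactness I would establish the strict inequality $I:=\inf\mathcal{J}<Nm$, which is where the threshold $g_0$ enters: since $\mathcal{J}(0)=Nm$, it suffices to exhibit a single competitor $\phi$ with $\mathcal{J}(\phi)<Nm$. A smooth approximation of $-\phi_0\chi_{B(0,R)}$ works, because the lowest eigenvalue $\lambda^1_+(H_\phi)$ can be driven arbitrarily close to $0$ by enlarging $g$, so the gain $N(m-\lambda^1_+(H_\phi))$ eventually dominates the kinetic and potential cost of $\phi$. The threshold $g_0$ is then any value beyond which this construction succeeds, and it depends on $N, m$ and $U$ exactly as stated.

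Now run concentration-compactness on the density $|\nabla\phi_n|^2+U(\phi_n)$ of a minimizing sequence. Vanishing is ruled out because $\phi_n\to 0$ in $L^p$ for $2<p<6$ forces, via \eqref{condition:U0} and standard resolvent estimates for $H_{\phi_n}=H_0+g\beta\phi_n$, the convergence $\lambda^1_+(H_{\phi_n})\to m$, hence $\liminf \mathcal{J}(\phi_n)\geq Nm>I$, a contradiction. The main obstacle is dichotomy, where the usual strict subadditivity of mass-constrained problems is unavailable and where $\lambda^1_+$ does not split additively under a decomposition of $\phi$. I would instead rely on an IMS-type localization for Dirac operators with scalar potentials: if $\phi_n=\phi_n^1+\phi_n^2$ with $\mathrm{dist}(\mathrm{supp}\,\phi_n^1,\mathrm{supp}\,\phi_n^2)\to\infty$, then
\[
\lambda^1_+(H_{\phi_n^1+\phi_n^2})=\min\bigl(\lambda^1_+(H_{\phi_n^1}),\lambda^1_+(H_{\phi_n^2})\bigr)+o(1).
\]
Assuming the minimum is realized by $\phi_n^1$, this gives
\[
\mathcal{J}(\phi_n)=\mathcal{J}(\phi_n^1)+\int_{\mathbb{R}^3}\left[\frac{|\nabla\phi_n^2|^2}{2}+U(\phi_n^2)\right]dx+o(1)\geq I+c'\|\phi_n^2\|_{H^1}^2+o(1),
\]
which forces $\phi_n^2\to 0$ in $H^1$ and contradicts genuine dichotomy. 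This \emph{min}-rather-than-\emph{sum} behaviour is exactly the nonstandard concentration-compactness inequality announced in the introduction.

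With compactness up to translation secured, a subsequence of $\phi_n(\cdot-y_n)$ converges weakly in $H^1$ and strongly in $L^p$ for $p\in[2,6)$ to some $\phi$. Weak lower semicontinuity of the Dirichlet energy, $L^p$-continuity of the potential term via \eqref{condition:U0}, and continuity of $\phi\mapsto\lambda^1_+(H_\phi)$ in the same topology (again resting on resolvent perturbation) combine to give $\mathcal{J}(\phi)\leq I$, so $\phi$ is a minimizer. Since $I<Nm$, the eigenvalue $\lambda^1_+(H_\phi)$ lies strictly inside the gap $[0,m)$ and is attained by an eigenfunction $\psi$, which is the required quark wave function. Under the additional hypothesis $0\notin\sigma(H_\phi)$, the supersymmetric picture forces $\lambda^1_+(H_\phi)$ to be a simple isolated eigenvalue; first-order perturbation theory then makes $\phi\mapsto\lambda^1_+(H_\phi)$ differentiable with derivative $g\psi^*\beta\psi$, and setting the first variation of $\mathcal{J}$ to zero produces $-\Delta\phi+U'(\phi)+Ng\psi^*\beta\psi=0$, completing the proof.
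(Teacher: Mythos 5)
Your proof follows the same skeleton as the paper's: reduce to minimizing $\mathcal{J}(\phi)=N\lambda^1_+(H_\phi)+\int\bigl[\tfrac{|\nabla\phi|^2}{2}+U(\phi)\bigr]dx$ using the supersymmetric formula $\lambda^1_+(H_\phi)=\inf_{\|\omega\|_{L^2}=1}\|D_\phi\omega\|_{L^2}$, exhibit a competitor with energy below $Nm$ for $g$ large (the paper's Lemma \ref{supersymmetry_estimationsoliton}), rule out vanishing via the convergence of $\lambda^1_+(H_{\phi_n})$ to $m$, and read off the Euler--Lagrange equation through the Foldy--Wouthuysen transform when $0\notin\sigma(H_\phi)$ (the paper's Lemma \ref{eulerlagrangesoliton}). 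The genuine divergence is in the dichotomy step. The paper (Lemmas \ref{binding} and \ref{lemma:existencesolitonnonsymmetric}) introduces $I(t)=\inf\{N(t\lambda^1_+(H_{\phi_1})^2+(1-t)\lambda^1_+(H_{\phi_2})^2)^{1/2}+E(\phi_1)+E(\phi_2)\}$, observes it is concave with $I(0)=I(1)\leq Nm$, hence $I(t)\geq I(1)$, and after the split compares the energy to $I(\|\omega_0\|_{L^2}^2)$, with the weight $t$ tracking the localized mass of the quark function; this is the nonstandard binding inequality the introduction alludes to. Your min formula $\lambda^1_+(H_{\phi^1+\phi^2})\geq\min(\lambda^1_+(H_{\phi^1}),\lambda^1_+(H_{\phi^2}))-o(1)$ is the coarser consequence of that same IMS decomposition of the quadratic form $\|D_\phi\cdot\|_{L^2}^2$ (not of the indefinite form of $H_\phi$), obtained by minimizing the convex combination over $t$, bearing in mind that the buffer piece of $\omega$ is controlled by $m^2\geq\lambda^1_+(\cdot)^2$. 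Using it directly, you avoid introducing the family $I(t)$ and the concavity lemma, at the cost of separately handling the sub-case where the minimum is realized by the escaping piece $\phi_n^2$: there you must invoke that the concentrating piece converges to a nonzero $\phi_0$ to reach a contradiction, a step your write-up slides past with a ``WLOG.'' Both routes are correct; the paper's $I(t)$ formulation mirrors classical strict subadditivity and plugs cleanly into Lewin's presentation of concentration-compactness, while yours is more hands-on and somewhat more elementary.
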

\begin{remark}
The main problem that occurs when $0\in\sigma(H_\phi)$ is that $\phi\mapsto \lambda^1_+(H_\phi)$ is not necessarily G\^{a}teaux differentiable.
We will get in Corollary \ref{cor:solitongroundgeneral}  that $0\notin \sigma(H_\phi)$ under some restrictions on the parameters of the model. From the physical point of view, the most relevant parameters satisfy the requirements of this corollary \cite{Friedberg1977-1,Goldflam1982}.
\end{remark}

\subsubsection{The bag approximation}
The same method adapted to the $BV$ setting  gives us similar results for the bag approximation model. 
\begin{theorem}\label{maintheorem3}
Let $K\in\mathbb{N}\backslash\{0\}.$ Assume $g\in(0,m)$. There is a constant $\delta>0$ such that if:
\[\tag{H3}a,~b<\delta,\]
then, for any $1\leq k_1\leq\dots\leq k_N\leq K$, there exists a solution 
\[(\psi_1,\dots,\psi_N,\chi_\Omega)\in H^{1/2}_{sym}(\mathbb{R}^3,\mathbb{C}^4)^N\times BV_{rad}(\mathbb{R}^3,\mathbb{R})\]
 of equations \eqref{equation:cavity} with 
\[\lambda_i = \lambda^{k_i}_+(H_{-\chi_\Omega})\in (0,m)\]
where $\chi_\Omega$ is a minimum of problem \eqref{variationalformulationcavity}. 
\end{theorem}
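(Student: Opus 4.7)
The plan is to apply the direct method of the calculus of variations to \eqref{variationalformulationcavity} restricted to the radially symmetric class, adapting to the $BV$ framework the argument that produces Theorem \ref{maintheorem}. First I would establish the strict upper bound $\inf<Nm$: when $\Omega=\varnothing$ the functional equals $Nm$ since $\lambda^k_+(H_0)=m$ for every $k$, while testing on a sufficiently large ball $B(0,R)$ drives the first $K$ values $\lambda^{k_i}_+(H_{-\chi_{B(0,R)}})$ strictly inside the gap $(0,m)$, either by diagonalising the reduced radial Dirac system or by a trial-function computation using elements of $H^{1/2}_{sym}$ supported in $B(0,R)$. Provided $a,b<\delta$ with $\delta$ depending on $K$, $g$ and $m$, the perimeter and volume contributions are absorbed, hence the infimum in \eqref{variationalformulationcavity} is strictly smaller than $Nm$.

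Next, I would fix a minimizing sequence $(\chi_{\Omega_n})$. The min-max definition forces $\lambda^{k_i}_+(H_{-\chi_{\Omega_n}})\geq 0$, so $aP(\Omega_n)+b|\Omega_n|$ stays uniformly bounded. Combined with radial symmetry, the Strauss-type compactness in $BV_{rad}(\mathbb{R}^3)$ yields, up to extraction, $\chi_{\Omega_n}\to\chi_\Omega$ in $L^1(\mathbb{R}^3)$ with $\Omega$ radial and of finite perimeter.

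The analytic heart of the argument is then the passage to the limit in the spectral quantities. Since $\chi_{\Omega_n}\to\chi_\Omega$ in $L^1$ and is uniformly bounded by $1$ in $L^\infty$, the convergence holds in every $L^p$ for $1\leq p<\infty$, in particular in $L^3$. Using the resolvent identity together with the Sobolev embedding $H^1\hookrightarrow L^6$—and the fact that $g<m$ ensures the spectral gap of $H_{-\chi_{\Omega_n}}$ does not close—one obtains norm-resolvent convergence of $H_{-\chi_{\Omega_n}}$ to $H_{-\chi_\Omega}$ inside the gap. Through the min-max characterisation of $\lambda^{k_i}_+$ this gives $\lambda^{k_i}_+(H_{-\chi_{\Omega_n}})\to \lambda^{k_i}_+(H_{-\chi_\Omega})$ provided the limits stay in the open gap, and both endpoints are excluded: the upper bound of Step 1 keeps each limit below $m$, while $\lambda^k_+\geq 0$ by definition. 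Combined with the classical lower semicontinuity of the perimeter and the continuity of the Lebesgue measure under $L^1$ convergence, this implies that $\chi_\Omega$ realises the minimum in \eqref{variationalformulationcavity}. The eigenfunctions $\psi_i$ and the first two lines of \eqref{equation:cavity} are then recovered from the spectral theorem, while the mean-curvature condition on $\partial^*\Omega$ follows from a first-order shape variation in the framework of \cite{Giusti1984,Pierre2005}.

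The main obstacle I anticipate is precisely this spectral-continuity step: one must prevent the eigenvalues $\lambda^{k_i}_+(H_{-\chi_{\Omega_n}})$ from being absorbed into the essential spectrum at the threshold $m$, in which case the corresponding eigenfunction would dissolve into the continuous spectrum and fail to exist in $L^2$. The strict upper bound of Step 1 together with the spectral symmetry enjoyed by a Dirac operator with scalar potential is what blocks this degeneracy, but making the norm-resolvent convergence rigorous for an only $L^\infty$ perturbation demands careful manipulation of the resolvent identity coupled with the Dirac Sobolev embeddings, which is the genuine technical core of the proof.
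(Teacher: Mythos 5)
Your proposal follows the same high-level architecture as the paper's proof (strict upper bound $< Nm$, direct method on a radial minimizing sequence, radial $BV$ compactness, passage to the limit in the spectral term, Euler--Lagrange via shape derivative), and it correctly isolates the critical obstacle (eigenvalues escaping to the essential spectrum at $m$). However, in the core spectral-continuity step you take a genuinely different and considerably heavier route than the paper, and you gloss over precisely the point that the paper's supersymmetric reduction is designed to handle.

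The paper does not work with norm-resolvent convergence of the strongly indefinite operator $H_{-\chi_{\Omega_n}}$ at all. Instead, it uses the identity $\lambda^k_+(H_\phi)=\sqrt{\lambda^k(D_\phi^* D_\phi)}$ from Theorem~\ref{supersymmetry_foldy}, reducing the $k$-th gap eigenvalue of the indefinite Dirac operator to the $k$-th eigenvalue of the nonnegative operator $D_\phi^* D_\phi$, for which the min--max formula is the classical one (infimum over $k$-dimensional subspaces, no spectral projection). The continuity needed for the direct method is then Proposition~\ref{auxiliary_convergence}, established by elementary and quantitative estimates on the quadratic form $\|D_\phi\omega\|_{L^2}^2$ (Lemmas~\ref{auxiliary_scalarfieldlimit} and~\ref{auxiliary_uniformcoecive}), and these estimates simultaneously produce the eigenfunctions as $H^1$-limits of the minimizing sequences. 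This matters for two reasons you underweight. First, the naive min--max of $\lambda^k_+(H_\phi)$ involves the $\phi$-dependent projection $E^+_\phi=\chi_{(0,+\infty)}(H_\phi)E$; the inf-of-sup formula only gives \emph{upper} semicontinuity in $\phi$, which is the wrong direction for the direct method, and making norm-resolvent convergence yield the needed \emph{lower} semicontinuity requires tracking the convergence of spectral projections onto portions of the gap whose endpoints avoid the spectrum -- workable, but far less elementary than the paper's argument, and it does not directly hand you the minimizing eigenfunctions $\omega^i_n$. Second, your Step~1 upper bound reads $\sum_i \lambda^{k_i}_+(H_{-\chi_\Omega})+aP(\Omega)+b|\Omega|<Nm$; this does \emph{not} imply $\lambda^{k_N}_+(H_{-\chi_{\Omega_n}})$ stays uniformly strictly below $m$ along a minimizing sequence (e.g., the smaller $k_i$-eigenvalues could drop while the $k_N$-th one drifts to $m$). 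You state that ``the strict upper bound of Step 1 together with the spectral symmetry\ldots blocks this degeneracy'', but do not say how. The paper is explicit about this: the assumption is that the \emph{$K$-th} threshold problem satisfies $lc_K=\inf\{N\lambda^K_+(H_{-\chi_\Omega})+aP+b|\Omega|\}<Nm$ (Lemma~\ref{supersymmetry_estimationcavity}), and the minimizing sequence is taken subject to that bound, which forces $\lambda^K_+(H_{-\chi_{\Omega_n}})<m$ uniformly and hence $\lambda^{k_i}_+\leq\lambda^K_+<m$ for all $i$. Finally, note that the trial-function computation in your Step~1 is essentially trivialized by supersymmetry: for $\omega$ supported in $B(0,R)$, $\|D_{-\chi_{B(0,R)}}\omega\|_{L^2}^2=\|\nabla\omega\|^2+(m-g)^2\|\omega\|^2$, so one simply plugs in a Dirichlet eigenfunction of the Laplacian; attempting the same directly with $H_0-g\beta\chi_\Omega$ and the $E^+_\phi$-constrained min--max would be much more awkward. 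In short, your route may well be completable, but the paper's supersymmetric reduction is the genuine mechanism that makes both the upper bound and the spectral continuity elementary, and the uniform bound away from $m$ needs to be pinned down concretely rather than invoked.
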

\begin{theorem}\label{maintheorem4}
Assume $g\in(0,m)$. There is a constant $\delta>0$ such that if:
\[\tag{H3}a,~b<\delta,\]
then, there exists a solution $(\psi,\dots,\psi,\chi_\Omega)\in H^{1/2}(\mathbb{R}^3,\mathbb{C}^4)^N\times BV(\mathbb{R}^3,\mathbb{R})$ of equations \eqref{equation:cavity} with $\lambda =\lambda_i= \lambda^{1}_+(H_{-\chi_\Omega})\in (0,m)$ where $\chi_\Omega$ is a minimum of problem \eqref{variationalformulationcavity} for $k_1=\dots=k_N=1.$
\end{theorem}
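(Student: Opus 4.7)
The plan is to adapt the concentration--compactness proof of Theorem~\ref{maintheorem2} to the BV setting of the bag approximation. I would take a minimizing sequence $(\chi_{\Omega_n})$ for
\[
\mathcal{I}(\chi_\Omega):=N\lambda^1_+(H_{-\chi_\Omega})+aP(\Omega)+b|\Omega|.
\]
A first step is to show that the infimum $I_\infty$ is strictly below $Nm$. Since $g\in(0,m)$, testing $\mathcal{I}$ on a fixed large ball $B_R$ yields $\lambda^1_+(H_{-\chi_{B_R}})<m$; choosing $\delta$ small enough so that $a,b<\delta$ makes $aP(B_R)+b|B_R|$ small, hence $\mathcal{I}(\chi_{B_R})<Nm$. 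Combined with $\lambda^1_+\ge 0$, this gives uniform bounds on $P(\Omega_n)$ and $|\Omega_n|$, so $(\chi_{\Omega_n})$ is bounded in $BV(\mathbb{R}^3)$.

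The core step is a concentration--compactness analysis of the sequence. Vanishing (no ball of fixed radius carries BV mass bounded below) can be excluded via a Weyl/resolvent argument: $H_{-\chi_{\Omega_n}}$ converges in the strong resolvent sense to $H_0$ along any translated subsequence, so the first positive eigenvalue escapes to the bottom $m$ of the essential spectrum (Lemma~\ref{lemma:selfadjoint}), giving $\liminf\mathcal{I}(\chi_{\Omega_n})\ge Nm>I_\infty$. Dichotomy---asymptotic splitting of $\chi_{\Omega_n}$ into far-separated pieces $\chi^1_n,\chi^2_n$ both of non-negligible BV mass---would then have to be ruled out by combining the additivity of perimeter and volume in the limit of infinite separation with a spectral subadditivity estimate
\[
\lambda^1_+\bigl(H_{-(\chi^1_n+\chi^2_n)}\bigr)=\min\bigl(\lambda^1_+(H_{-\chi^1_n}),\,\lambda^1_+(H_{-\chi^2_n})\bigr)+o(1),
\]
which yields $I_\infty\ge\mathcal{I}(\chi^1_\infty)+aP(\Omega^2_\infty)+b|\Omega^2_\infty|>I_\infty$, a contradiction. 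What remains is tightness, hence $L^1$-convergence up to translation, $\chi_{\Omega_n}\to\chi_\Omega$.

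To conclude, pass to the limit by lower semi-continuity: $P$ is $L^1$-lower semi-continuous and $|\cdot|$ is $L^1$-continuous on $BV$; for the spectral term, the min--max characterization of $\lambda^1_+$ together with strong resolvent convergence of $H_{-\chi_{\Omega_n}}$ to $H_{-\chi_\Omega}$ gives $\lambda^1_+(H_{-\chi_\Omega})\le\liminf\lambda^1_+(H_{-\chi_{\Omega_n}})$---the supersymmetric reformulation of Section~\ref{secsupersym} should make this transparent by turning $\lambda^1_+$ into an ordinary minimum of a positive quadratic form. Hence $\chi_\Omega$ is a minimizer. Taking $\psi$ to be a normalized eigenfunction of $H_{-\chi_\Omega}$ for $\lambda^1_+$ and setting $\psi_i=\psi$ for every $i$ (admissible since $N\le N_0$), the Euler--Lagrange system \eqref{equation:cavity} follows in the standard way: the eigenvalue equation from variations in $\psi$, and the mean-curvature boundary condition from inner variations $\Omega_t=(\mathrm{id}+tX)(\Omega)$ of the free boundary.

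The main obstacle is the spectral subadditivity estimate used in the dichotomy step. The classical Lions concentration--compactness inequalities bound $L^p$-type functionals, whereas $\chi_\Omega\mapsto\lambda^1_+(H_{-\chi_\Omega})$ is a strongly non-local spectral quantity. The required inequality must be proved by hand, the natural route being to build trial eigenfunctions localized on each piece of the splitting via smooth cut-offs, exploiting the exponential off-bag decay of true eigenfunctions for eigenvalues in the gap $(0,m)$. A related secondary difficulty is that the lower semi-continuity of $\lambda^1_+$ under $L^1$-convergence of $\chi_\Omega$ has to be established without recourse to the compact Sobolev embeddings available in the soliton case.
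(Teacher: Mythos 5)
Your architecture (exclude vanishing, exclude dichotomy, pass to the limit by lower semi-continuity, then Euler--Lagrange) matches the paper's, but the way you propose to rule out dichotomy is genuinely different from what the paper does, and that difference is exactly where your proposal is weakest.

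You want to prove a ``min'' subadditivity estimate
\[
\lambda^1_+\bigl(H_{-(\chi^1_n+\chi^2_n)}\bigr)=\min\bigl(\lambda^1_+(H_{-\chi^1_n}),\,\lambda^1_+(H_{-\chi^2_n})\bigr)+o(1),
\]
and you correctly flag at the end that this is the main obstacle. The paper avoids precisely this issue. It introduces, following Lemma~\ref{binding2}, the two-field functional
\[
\mathcal{F}(\chi_{\Omega_1},\chi_{\Omega_2},t) =N\bigl(t\,\lambda^1_+(H_{-\chi_{\Omega_1}})^2+(1-t)\lambda^1_+(H_{-\chi_{\Omega_2}})^2\bigr)^{1/2}
+aP(\Omega_1)+b|\Omega_1|+aP(\Omega_2)+b|\Omega_2|
\]
and $J(t)=\inf_{\chi_{\Omega_1},\chi_{\Omega_2}}\mathcal{F}(\chi_{\Omega_1},\chi_{\Omega_2},t)$. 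The square-root-of-weighted-squares form is exactly what drops out of the supersymmetric rewriting $\lambda^1_+(H_{\phi})=\inf_{\|\omega\|_{L^2}=1}\|D_{\phi}\omega\|_{L^2}$ once you cut off the trial function $\omega$ into two far-apart pieces with $L^2$ masses $t$ and $1-t$: the quadratic form $\|D_\phi\omega\|_{L^2}^2$ splits additively (up to $o(1)$) into the two local pieces. With that structure, $J$ is concave in $t$ (as an infimum of concave functions), $J(0)=J(1)$ by symmetry, and $J(t)\le Nm$ by the trivial competitor $\chi_{\Omega_1}=\chi_{\Omega_2}=0$; concavity immediately gives the binding inequality $J(t)\ge J(1)$ for all $t$, with no need to compare eigenvalues of the two pieces or to construct localized approximate eigenfunctions. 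The dichotomy contradiction in Lemma~\ref{lemma:bagapproxcc} then follows from the splitting estimate together with this soft binding inequality, plus a short argument showing that if $t\in(0,1)$ the limiting eigenvalues on the two pieces must coincide, forcing the escaping piece's BV cost to vanish, which contradicts $\lambda^1_+(H_{-\chi_\Omega})<m$ via Proposition~\ref{auxiliary_convergence}.

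So the paper's route buys you two things over yours: (i) it replaces the hard ``$\min+o(1)$'' spectral estimate by the elementary quadratic-form splitting plus an abstract concavity argument, and (ii) it handles all degenerate cases (escaping mass going to zero, escaping piece having lower eigenvalue, etc.) uniformly through the parameter $t$ rather than by case analysis. Your approach can probably be pushed through, but you would have to supply the subadditivity estimate by hand via localized trial functions and exponential off-bag decay, and treat the labeling/degeneracy issues carefully; the paper simply never needs this. Two minor additional points you leave implicit: the strict positivity $\lambda^1_+(H_{-\chi_\Omega})>0$ uses $g\in(0,m)$ via Lemma~\ref{supersymmetry_kernel}, and the absence of vanishing is obtained not through resolvent convergence to $H_0$ but through the $L^p$ convergence $\chi_{\Omega_n}\to0$ (for characteristic functions, convergence in one $L^p$ with $p>1$ forces convergence in every $L^q$) combined with the continuity result of Proposition~\ref{auxiliary_convergence}.
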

\begin{remark}
In this case, the assumption $g\in(0,m)$ ensures that $0\notin \sigma(H_{-\chi_\Omega})$.
\end{remark}
\subsection{The bag approximation model as a $\Gamma$-limit of soliton bag models}

The following results show the link between the soliton bag and the bag approximation models and are based on the $\Gamma$-convergence theory. 

Let us consider first for $\epsilon>0$ and $b>0$ the following functionals:
\[
	{E}_\epsilon(\phi) = \left\{
		\begin{array}{ll}
			\int_{\mathbb{R}^3}\left(\epsilon|\nabla \phi|^2+W(\phi)/\epsilon+b|\phi|^2\right)dx &\text{if }~\phi\in H^1(\mathbb{R}^3,\mathbb{R})\\
			+\infty	&\text{otherwise}
		\end{array} 
	\right.
\]
and 
\[
	E_0(\phi) = \left\{
		\begin{array}{ll}
			aP(\Omega)+b|\Omega|	&\text{if}~\phi=-\chi_\Omega\in BV(\mathbb{R}^3,\mathbb{R})\\
			+\infty	&\text{otherwise,}
		\end{array}
	\right.
\]
where $W:\mathbb{R}\rightarrow \mathbb{R}^+$ is a $\mathcal{C}^1$ function  which satisfies $W^{-1}(\{0\})=\{-1,0\},$ \eqref{condition:U0} and $a=2\int_{-1}^0\sqrt{W(s)}ds$.

\begin{proposition}\label{gammalimit}
Assume that there are positive constants $c$ and $2<q$  such that:
\[
		\begin{array}{ll}
			W(t)\leq c(|t|^2+|t|^q) &\forall t.
		\end{array}
\]
Then, $E_\epsilon$ $\Gamma$-converges to $E_0$ in $L^2\cap L^{\frac{3(q+2)}{4}}.$

\end{proposition}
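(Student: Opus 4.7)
The plan is to split $E_\epsilon$ into a Modica--Mortola part and a continuous $L^2$-perturbation, and then invoke the classical Modica $\Gamma$-convergence theorem together with the fact that $\Gamma$-convergence is stable under the addition of a continuous functional.

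Concretely, I would set
\[
M_\epsilon(\phi) = \int_{\mathbb{R}^3}\!\left(\epsilon|\nabla\phi|^2 + \tfrac{1}{\epsilon}W(\phi)\right)dx, \qquad B(\phi) = b\int_{\mathbb{R}^3}|\phi|^2\,dx,
\]
both extended by $+\infty$ off $H^1(\mathbb{R}^3,\mathbb{R})$ (resp.\ $L^2(\mathbb{R}^3,\mathbb{R})$), so that $E_\epsilon = M_\epsilon + B$ and the candidate limit splits as $E_0 = M_0 + B$, where $M_0(\phi) = aP(\Omega)$ for $\phi = -\chi_\Omega\in BV(\mathbb{R}^3,\mathbb{R})$ and $+\infty$ otherwise. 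Since $W\ge0$ vanishes exactly at $\{-1,0\}$, the classical Modica--Mortola theorem (as developed by Modica, Sternberg, Baldo) applies with $a=2\int_{-1}^0\sqrt{W(s)}\,ds$ and gives $M_\epsilon\xrightarrow{\Gamma}M_0$ in $L^1_{loc}(\mathbb{R}^3)$.

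For the liminf inequality, any $\phi_\epsilon\to\phi$ in $L^2\cap L^{3(q+2)/4}$ also converges in $L^1_{loc}$, so Modica's theorem yields $\liminf_\epsilon M_\epsilon(\phi_\epsilon)\ge M_0(\phi)$, while the continuity of $B$ on $L^2$ gives $B(\phi_\epsilon)\to B(\phi)$; when $M_0(\phi)<\infty$ one necessarily has $\phi=-\chi_\Omega$ with $|\Omega|<\infty$ and $B(\phi)=b|\Omega|$, so adding the two bounds yields $\liminf_\epsilon E_\epsilon(\phi_\epsilon)\ge E_0(\phi)$. For the recovery sequence, given $\phi=-\chi_\Omega$ with $P(\Omega)+|\Omega|<\infty$, I would first approximate $\Omega$ by smooth sets via a diagonal argument, then use Modica's standard construction $\phi_\epsilon(x)=q(d_\Omega(x)/\epsilon)$, where $q$ is the optimal one-dimensional profile joining $-1$ to $0$ and $d_\Omega$ is the signed distance to $\partial\Omega$, truncated so that $\phi_\epsilon$ takes values in $[-1,0]$ and is supported in a bounded neighborhood of $\Omega$. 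Dominated convergence then gives $\phi_\epsilon\to\phi$ in every $L^p$, $p<\infty$, in particular in $L^2\cap L^{3(q+2)/4}$, and Modica's $\limsup$ bound together with $B(\phi_\epsilon)\to b|\Omega|$ yields $\limsup_\epsilon E_\epsilon(\phi_\epsilon)\le E_0(\phi)$.

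The main obstacle I expect is the mismatch between the $L^1_{loc}$ formulation of the classical Modica theorem and the stronger topology $L^2\cap L^{3(q+2)/4}$ prescribed here. A finer topology only strengthens the liminf, so the real work lies in checking that Modica's recovery sequence converges in this refined topology; this follows from its uniform $L^\infty$ bound and the shrinking width of the transition layer. The growth assumption $W(t)\le c(|t|^2+|t|^q)$ enters at two places: it ensures that $W(\phi)$ is integrable on $L^2\cap L^{3(q+2)/4}$, so that $E_\epsilon$ is well-defined on the ambient space, and it provides the control on the optimal profile needed to quantify the $\limsup$ in Modica's construction.
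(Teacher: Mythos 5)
Your decomposition $E_\epsilon = M_\epsilon + B$ with $B(\phi) = b\|\phi\|_{L^2}^2$ continuous on the ambient space is clean and matches what the paper does implicitly; the overall strategy is sound. The gap is in the appeal to ``the classical Modica--Mortola theorem'' on $\mathbb{R}^3$. The results of Modica and Sternberg are proved on \emph{bounded} domains, and the passage to the whole space is exactly the technical difficulty the paper singles out (``the main difficulty here is that the problems are set in an unbounded domain''). On the liminf side this is a minor issue -- one can localize to balls and take a supremum, or do as the paper does and apply the Cauchy--Schwarz/Young trick $\epsilon|\nabla\phi|^2 + W(\phi)/\epsilon \ge 2|\nabla\phi|\sqrt{W(\phi)} = |\nabla(\mathcal{W}\circ\phi)|$ directly on $\mathbb{R}^3$, which also explains why the growth hypothesis on $W$ (hence on $\mathcal{W}$) and the topology $L^2\cap L^{3(q+2)/4}$ appear: they guarantee $\mathcal{W}\circ\phi_n \to \mathcal{W}\circ\phi$ in $L^1(\mathbb{R}^3)$ and hence that $\mathcal{W}\circ\phi\in BV(\mathbb{R}^3)$, not just $BV_{\mathrm{loc}}$.

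The real problem is the recovery sequence. You say you would construct $\phi_\epsilon = q(d_\Omega/\epsilon)$ ``supported in a bounded neighborhood of $\Omega$'', but a set $\Omega$ of finite measure and finite perimeter need not be bounded, in which case no such bounded neighborhood exists, the truncation of the profile is not uniform, and dominated convergence in $L^p(\mathbb{R}^3)$ does not follow from the $L^\infty$ bound alone. The ``diagonal argument'' you mention does not address this: smooth approximation of $\Omega$ in perimeter does not automatically produce bounded sets. What is actually needed (and what the paper does) is a two-step argument: first establish the $\limsup$ inequality for sets $\tilde\Omega \subset\subset B(0,R)$ via Sternberg's bounded-domain construction in $H^1_0(B(0,R))$, then observe that $\chi_{\Omega\cap B(0,R)}\to\chi_\Omega$ with $P(\Omega\cap B(0,R))\to P(\Omega)$ and $|\Omega\cap B(0,R)|\to|\Omega|$ as $R\to\infty$, and use the lower semicontinuity of the $\Gamma$-$\limsup$ functional to pass to the limit. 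Without this truncation-plus-$\Gamma$-limsup-semicontinuity step, the $\limsup$ inequality is only proved for bounded $\Omega$, which is a genuine gap.
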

This proposition is an adaptation of the result of Modica and Mortola \cite{modicamortola1,modicamortola2} generalized by Modica \cite{Modica1987} (see also Sternberg \cite{Sternberg} or Braides \cite{Braides1998})  for the gradient theory of phase transitions in an unbounded setting. Its proof strongly uses the one of \cite{Sternberg}.

Let us introduce for $\epsilon>0$: 
\[
	\mathcal{E}_\epsilon(\phi)=\left\{
	\begin{array}{ll}
		N\lambda^1_+(H_\phi)+E_\epsilon(\phi) 	&\text{if }\phi \in H^1(\mathbb{R}^3,\mathbb{R})\\
		+\infty																					&\text{otherwise}
	\end{array}
	\right.
\]
and 
\[
	\mathcal{E}_0(\phi)=\left\{
	\begin{array}{ll}
		N\lambda^1_+(H_{-\chi_\Omega})+E_0(-\chi_\Omega) 	&\text{if }\phi=-\chi_\Omega \in BV(\mathbb{R}^3,\mathbb{R})\\
		+\infty																					&\text{otherwise.}
	\end{array}
	\right.
\]

\begin{theorem}\label{gammalimit2}
	Assume that the condition of Proposition \ref{gammalimit} is true and that $g\in(0,m)$. Then, $\mathcal{E}_{\epsilon}$ $\Gamma-$ converges to $\mathcal{E}_0$ in $L^2\cap L^{\frac{3(q+2)}{4}}.$

	Let us assume besides that there are $c>0,$ $t_1<-1<t_2<0$ such that $W$ satisfies:
	\[
			W(t)\geq c|t|^q
	\]
	for all $t\notin(t_1,t_2)$ and 
	\begin{equation}\label{lc}
		l_c=\inf\{\mathcal{E}_0(\phi):~\phi=-\chi_\Omega \in BV\}<Nm.
	\end{equation}
	Then, there is $\epsilon_0>0$ such that for all $0<\epsilon<\epsilon_0,$ the problem 
	\begin{equation}\label{lsepsilon}
	l_s^\epsilon=\inf\{\mathcal{E}_\epsilon(\phi):~\phi \in H^1\}<Nm
	\end{equation}
	has a minimum $\phi_\epsilon.$ There is a subsequence such that, up to translation,  we have:
	\[
		\left\{\begin{array}{l}
			\mathcal{W}\circ\phi_{\epsilon_n}\rightarrow \mathcal{W}\circ(-\chi_\Omega)~\text{strictly in}~BV\\
			\phi_{\epsilon_n}\rightarrow (-\chi_\Omega)~\text{strongly in}~L^p~\text{for}~p\in[2,\frac{3(q+2)}{4}]\\
			l_s^{\epsilon_n} \rightarrow l_c
		\end{array}\right.
	\]
	where $-\chi_\Omega$ is a minimum  of the problem \eqref{lc} and $\mathcal{W}:t\mapsto  2\int_0^t\sqrt{W(s)}ds.$
\end{theorem}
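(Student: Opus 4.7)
\begin{proof-sketch}
The plan is to write $\mathcal{E}_\epsilon = G + E_\epsilon$, where $G(\phi) := N\lambda^1_+(H_\phi)$ (extended by $+\infty$ outside the admissible class), and to combine Proposition \ref{gammalimit} with the stability of $\Gamma$-convergence under continuous perturbations. For this, I first verify that $\phi\mapsto \lambda^1_+(H_\phi)$ is continuous on $L^2\cap L^{\frac{3(q+2)}{4}}$. Since $g\in(0,m)$, the perturbation $g\beta\phi$ is $H_0$-relatively bounded with small bound on this space, and Lemma \ref{lemma:selfadjoint} yields $\sigma_{\mathrm{ess}}(H_\phi)=(-\infty,-m]\cup[m,+\infty)$, so $\lambda^1_+(H_\phi)\in[0,m]$ is an isolated eigenvalue. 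Norm-resolvent convergence combined with the min--max characterization gives continuity of $G$, and this, coupled with Proposition \ref{gammalimit}, proves the first $\Gamma$-convergence statement.

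For the existence half, applied to a near-optimal $-\chi_{\Omega^\ast}$ for $l_c$, the $\Gamma$-limsup yields $l_s^\epsilon\leq l_c+o(1)<Nm$ for $\epsilon$ small. A minimizing sequence $(\phi_n)$ for $l_s^\epsilon$ is bounded in $L^2$ (by $b\|\phi\|_{L^2}^2$) and in $L^q$ (by $W(t)\geq c|t|^q$ on $(t_1,t_2)^c$ and the bounded contribution on $(t_1,t_2)$), hence in $L^2\cap L^{\frac{3(q+2)}{4}}$ via Sobolev embedding. I then run a concentration-compactness argument in the spirit of Theorem \ref{maintheorem2} on the density $\epsilon|\nabla\phi_n|^2+W(\phi_n)/\epsilon+b|\phi_n|^2$. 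Vanishing is ruled out because $\phi_n\to 0$ locally would force $\lambda^1_+(H_{\phi_n})\to m$ and therefore $\liminf\mathcal{E}_\epsilon(\phi_n)\geq Nm>l_s^\epsilon$. Dichotomy is ruled out by strict subadditivity: if $\phi_n\approx \phi_n^{(1)}+\phi_n^{(2)}$ with far-apart supports, then $E_\epsilon$ splits additively but $\lambda^1_+(H_{\phi_n^{(1)}+\phi_n^{(2)}})\to \min_j \lambda^1_+(H_{\phi_n^{(j)}})$ (the ground-state spinor localizes in one well), so the energy exceeds $l_s^\epsilon$ unless one piece is trivial. The surviving translated subsequence yields a minimizer $\phi_\epsilon$.

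For the $\epsilon\to 0$ limit, $(\phi_\epsilon)$ is a sequence of minimizers with $\limsup l_s^\epsilon\leq l_c$ by $\Gamma$-limsup, so $E_\epsilon(\phi_\epsilon)$ stays bounded. The Modica--Mortola chain-rule estimate $|\nabla(\mathcal{W}\circ\phi_\epsilon)|=2\sqrt{W(\phi_\epsilon)}\,|\nabla\phi_\epsilon|\leq \epsilon|\nabla\phi_\epsilon|^2+W(\phi_\epsilon)/\epsilon$, together with the $L^2$-control from $b|\phi_\epsilon|^2$ and the growth bound on $W$, shows that $\mathcal{W}\circ\phi_\epsilon$ is bounded in $BV$. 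I then reapply the concentration-compactness dichotomy, now to the $BV$ sequence $\mathcal{W}\circ\phi_\epsilon$: vanishing and dichotomy are excluded exactly as above, using $l_c<Nm$ and the same subadditivity-type estimate for $\lambda^1_+$. Up to translation and extraction $\mathcal{W}\circ\phi_{\epsilon_n}\to v$ strictly in $BV$; since $W^{-1}(\{0\})=\{-1,0\}$ the Modica--Mortola structure forces $v=\mathcal{W}\circ(-\chi_\Omega)$ for some set $\Omega$ of finite perimeter, and the Lipschitz behaviour of $\mathcal{W}^{-1}$ on the range of interest transfers the strict $BV$-convergence to strong $L^p$-convergence of $\phi_{\epsilon_n}$ to $-\chi_\Omega$. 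Chaining $\Gamma$-liminf and $\Gamma$-limsup, $l_c\leq \mathcal{E}_0(-\chi_\Omega)\leq \liminf l_s^{\epsilon_n}\leq \limsup l_s^{\epsilon_n}\leq l_c$, which both identifies $-\chi_\Omega$ as a minimizer of $l_c$ and proves $l_s^{\epsilon_n}\to l_c$.

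The hard part will be the two concentration-compactness steps: because $\lambda^1_+(H_\phi)$ takes the \emph{minimum} across far-separated pieces instead of adding, the classical Lions subadditivity inequality $I(t)<I(a)+I(t-a)$ is not available. The argument must instead exploit the strict inequality $l_c<Nm$ together with model-specific inequalities developed earlier in the paper for Theorems \ref{maintheorem2} and \ref{maintheorem4}, ensuring that any non-trivial splitting is strictly penalized by the phase-transition/perimeter term.
\end{proof-sketch}
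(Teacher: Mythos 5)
Your roadmap is aligned with the paper's at the level of strategy (stability of $\Gamma$-convergence under continuous perturbations for the first claim, then $\Gamma$-limsup $+$ concentration-compactness for existence at small $\epsilon$, then a second concentration-compactness pass to take $\epsilon\to 0$), but the two steps you yourself flag as ``the hard part'' are exactly where your sketch is incomplete, and they are the heart of the theorem.

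First, your mechanism for excluding dichotomy is not the one the paper uses, and as stated it is only a heuristic. You assert $\lambda^1_+(H_{\phi^{(1)}+\phi^{(2)}})\to\min_j\lambda^1_+(H_{\phi^{(j)}})$ for far-separated pieces (``the spinor localizes in one well''); proving this would require an IMS/Agmon-type double-well analysis that the paper deliberately avoids. What the paper actually does is construct a concave concentration-compactness function: it replaces the would-be subadditivity by a \emph{Pythagorean} splitting of the ground-state quadratic form, $\|D_\phi\omega\|^2_{L^2}=\|D_{\phi_1}\omega_1\|^2_{L^2}+\|D_{\phi_2}\omega_2\|^2_{L^2}\geq t\,\lambda^1_+(H_{\phi_1})^2+(1-t)\,\lambda^1_+(H_{\phi_2})^2$ with $t=\|\omega_1\|_{L^2}^2$, and defines the auxiliary functions $I(t)$, $J_\epsilon(t)$ on $[0,1]$. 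Concavity of these in $t$, together with $I(0)=I(1)\leq Nm$, yields the reversed inequality $I(t)\geq I(1)$ (Lemma \ref{binding} and Lemma \ref{gammabinding}), which is what forces $\|\omega_0\|_{L^2}\in\{0,1\}$ and then a contradiction with $I(1)<Nm$ via Proposition \ref{auxiliary_convergence}. Your sketch skips this entirely; the supersymmetric representation $\lambda^1_+(H_\phi)=\sqrt{\lambda^1(D_\phi^*D_\phi)}$ is not just a device for well-posedness but the source of the convexity/concavity structure you need.

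Second, for the $\epsilon\to0$ limit you propose to ``reapply the concentration-compactness dichotomy to the $BV$ sequence $\mathcal{W}\circ\phi_\epsilon$ exactly as above.'' But in the paper's Lemma \ref{lemma:gamma2} the localized piece converges to a characteristic function $-\chi_\Omega$ (a $BV$ object entering $E_0$) while the escaping piece $\phi_{2,n}\in\mathcal{Z}_{\epsilon_n}$ remains a regularized soliton profile and must be measured by $G_{\epsilon_n}$, not by $E_0$. This is precisely why the paper introduces the hybrid functionals $\mathcal{F}_\epsilon(\phi_1,\phi_2,t)$ and $J_\epsilon(t)$ mixing the two regimes, together with the $BV$ trace estimate (Theorem $3.86$ of Ambrosio--Fusco--Pallara) to split $\mathcal{W}\circ\phi_n$ across an annulus. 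None of this appears in your sketch. A smaller point: you invoke ``Lipschitz behaviour of $\mathcal{W}^{-1}$'' to transfer the $BV$-convergence of $\mathcal{W}\circ\phi_{\epsilon_n}$ to $L^p$-convergence of $\phi_{\epsilon_n}$, but $\mathcal{W}'(t)=2\sqrt{W(t)}$ vanishes at $t\in\{-1,0\}$, so $\mathcal{W}^{-1}$ is not Lipschitz near the relevant values; the paper instead uses the membership $\phi_n\in\mathcal{Z}_{\epsilon_n}$ (so that $W(\phi_n)\to0$ pointwise a.e.\ up to extraction, forcing the a.e.\ limit into $\{-1,0\}$) together with the uniform $L^p$ bounds from the growth $W(t)\geq c|t|^q$.
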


The constant $\epsilon$ in the functionals can be obtained by scale change in some soliton bag functional.

\begin{remark}
	The physicists \cite{Friedberg1977-1} actually considered potentials $U$ in the soliton bag model of the form
	\[
		U:\phi\mapsto W(\phi)+b|\phi|^2
	\] 
satisfying the conditions of Theorem \ref{gammalimit2}.
\end{remark}
\begin{remark}
	Goldflam and Wilets \cite{Goldflam1982} studying the dependence of the numerical solutions on the parameters exhibit behaviors of the $\phi$ field similar to the ones of the Modica-Mortolla problem \cite{modicamortola1,modicamortola2,Modica1987,Sternberg,Braides1998}. Nevertheless, this is the first result which shows clearly the link between the two models we studied.
\end{remark}
\begin{remark}
	The main difficulty here is that the problems are set in an unbounded domain. We overcome this combining the $\Gamma$-convergence theory and the concentration compactness method.
\end{remark}
In the next corollary, we give conditions on the parameters of the soliton bag model that ensure that $0$ does not belong to $\sigma(H_\phi)$ where $\phi$ is a minimum of the ground state problem. Hence, $\phi$ satisfies the last equation of system \eqref{equation}.

\begin{corollary}\label{cor:solitongroundgeneral}
Assume the hypothesis of Theorem \ref{gammalimit2}  true. Then, there are $\epsilon_0>0$ and for all $\epsilon\in (0,\epsilon_0)$  a minimizer $\phi$ of problem \eqref{lsepsilon}, a function $\psi\in H^1(\mathbb{R}^3,\mathbb{C}^4)$ which satisfy
\begin{equation*}
\left\{
\begin{array}{lr}
H_{0}\psi+g\beta\phi\psi =\lambda\psi&a.e.~\text{in}~\mathbb{R}^3\\
\|\psi\|_{L^2}=1\\
-\epsilon\Delta\phi+\frac{W'(\phi)}{\epsilon}+2b\phi + Ng\psi^*\beta\psi=0,&a.e.~\text{in}~\mathbb{R}^3\\
\end{array}\right.
\end{equation*}
where $\lambda=\lambda^1_+(H_{\phi})>0$.
\end{corollary}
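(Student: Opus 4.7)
The plan is to combine the $\Gamma$-convergence statement of Theorem~\ref{gammalimit2} with a spectral stability argument that transfers the absence of $0$ from the spectrum of $H_{-\chi_\Omega}$ to that of $H_{\phi_\epsilon}$, and then to read off the Euler--Lagrange system by first-order perturbation theory.

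First, I would apply Theorem~\ref{gammalimit2} to fix $\epsilon_0>0$ and, for each $\epsilon\in(0,\epsilon_0)$, select a minimizer $\phi_\epsilon\in H^1(\mathbb{R}^3,\mathbb{R})$ of problem~\eqref{lsepsilon}. Along any sequence $\epsilon_n\downarrow 0$ one can, up to translation, extract a subsequence such that $\phi_{\epsilon_n}$ converges strongly in $L^p(\mathbb{R}^3)$ for every $p\in[2,3(q+2)/4]$---in particular for $p=3$---to some $-\chi_\Omega$ which minimizes the bag approximation problem~\eqref{lc}.

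Second, I would establish $0\notin\sigma(H_{\phi_\epsilon})$ for all sufficiently small $\epsilon$. Under the assumption $g\in(0,m)$, the effective scalar mass $m-g\chi_\Omega$ is pointwise bounded below by $m-g>0$, and the supersymmetric structure of $H_{-\chi_\Omega}$ (already exploited for Theorem~\ref{maintheorem4}, as noted in the remark that follows it) yields $0\notin\sigma(H_{-\chi_\Omega})$, producing an open spectral gap around the origin. Because $\phi\mapsto g\beta\phi$ is $H_0$-relatively compact for $\phi\in L^p$ with $p\geq 3$ (Lemma~\ref{lemma:selfadjoint} combined with Sobolev embeddings of the form domain $H^{1/2}$), the $L^3$-convergence of the first step upgrades to norm-resolvent convergence $H_{\phi_{\epsilon_n}}\to H_{-\chi_\Omega}$. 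Stability of spectral gaps under norm-resolvent convergence then gives $0\notin\sigma(H_{\phi_{\epsilon_n}})$ and $\lambda^1_+(H_{\phi_{\epsilon_n}})\to\lambda^1_+(H_{-\chi_\Omega})>0$ along the subsequence, and a standard contradiction argument applied to an arbitrary candidate sequence $\epsilon_n\to 0$ promotes this to every $\epsilon$ below a (possibly smaller) $\epsilon_0$.

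Third, once $0$ lies in the resolvent set, $\lambda^1_+(H_{\phi_\epsilon})\in(0,m)$ is an isolated eigenvalue of finite-dimensional eigenspace (supersymmetry). The Hellmann--Feynman formula then shows that $\phi\mapsto N\lambda^1_+(H_\phi)$ is G\^{a}teaux differentiable at $\phi_\epsilon$ with derivative $h\mapsto Ng\int_{\mathbb{R}^3}h\,\psi^*\beta\psi\,dx$ for any $L^2$-normalized eigenfunction $\psi$. Writing the optimality condition $d\mathcal{E}_\epsilon(\phi_\epsilon)[h]=0$ on test functions $h\in\mathcal{C}^\infty_c$ and integrating by parts in the $E_\epsilon$-part yields the scalar-field equation displayed in the statement, while $\psi$ satisfies the Dirac equation $H_0\psi+g\beta\phi_\epsilon\psi=\lambda\psi$ with $\lambda=\lambda^1_+(H_{\phi_\epsilon})$ by definition; elliptic regularity on the $\phi$-equation promotes the distributional identity to an almost-everywhere one, and gives $\psi\in H^1$.

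The main obstacle is step two: transferring the spectral gap from $H_{-\chi_\Omega}$ to $H_{\phi_\epsilon}$ despite the fact that the convergence $\phi_\epsilon\to-\chi_\Omega$ is only in $L^p$, never in $L^\infty$ (the limit being a characteristic function). The argument relies crucially on form-compactness of $g\beta\phi$ relative to $H_0$ for $\phi\in L^p$ with $p\geq 3$, which is precisely what is needed to obtain norm-resolvent convergence, and thereby persistence of the spectral gap, under such low-regularity perturbations.
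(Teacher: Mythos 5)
Your overall outline matches the paper's: use Theorem~\ref{gammalimit2} to produce minimizers $\phi_\epsilon$ and their $L^p$-convergence to a bag-approximation minimizer $-\chi_\Omega$, deduce $0\notin\sigma(H_{\phi_\epsilon})$ for small $\epsilon$ from $0\notin\sigma(H_{-\chi_\Omega})$, and then derive the Euler--Lagrange system. In step two, however, the paper reaches $\lambda^1_+(H_{\phi_{\epsilon_n}})\to\lambda^1_+(H_{-\chi_\Omega})>0$ directly through Proposition~\ref{auxiliary_convergence} (min-max characterization plus the coercivity Lemma~\ref{auxiliary_uniformcoecive}), whereas you route through norm-resolvent convergence. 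That is a legitimate alternative, but your justification is slightly off: what produces norm-resolvent convergence is the quantitative bound $\|g\beta(\phi_n-\phi_\infty)(H_0+i)^{-1}\|\leq Cg\|\phi_n-\phi_\infty\|_{L^3}$ (relative boundedness with an $L^3$-controlled constant), not relative compactness; relative compactness of $g\beta\phi_n$ for fixed $n$ does not by itself give convergence of resolvents.

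The genuine gap is in step three. You invoke Hellmann--Feynman to assert that $\phi\mapsto N\lambda^1_+(H_\phi)$ is G\^ateaux differentiable at $\phi_\epsilon$ with derivative $h\mapsto Ng\int h\,\psi^*\beta\psi$ for \emph{any} normalized eigenfunction $\psi$. This presupposes simplicity of the first positive eigenvalue. Lemma~\ref{simplicity} guarantees simplicity only in the symmetric case; in the present non-symmetric setting $\lambda^1_+(H_{\phi_\epsilon})$ may be degenerate, and for a degenerate eigenvalue the directional derivative of $\lambda^1_+$ is a minimum over the eigenspace of quadratic forms, not a single linear functional, so the map need not be G\^ateaux differentiable and the displayed derivative need not be well-defined. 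The paper avoids this entirely via Lemma~\ref{eulerlagrangesoliton}: fix the optimal $\omega$ with $\lambda=\|D_{\phi_\epsilon}\omega\|_{L^2}>0$, note that $t\mapsto N\|D_{\phi_\epsilon+th}\omega\|_{L^2}+\int\bigl(\tfrac{|\nabla(\phi_\epsilon+th)|^2}{2}+U(\phi_\epsilon+th)\bigr)dx$ is an upper bound for the minimized energy with equality at $t=0$, and differentiate this smooth one-parameter function. This sidesteps multiplicity and explains why $\lambda>0$ (the whole point of the corollary) is the only spectral input needed. Replacing your Hellmann--Feynman step by this one-sided argument closes the gap.
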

\begin{remark}
From the physical point of view, the most relevant parameters for the soliton bag model satisfy these requirements \cite{Friedberg1977-1}. Indeed,  Friedberg and Lee considered a potential $U$ composed of a two well potential $W$ and a mass term. The two well potential and the restriction $\epsilon\in(0,\epsilon_0)$ are introduced so as to force the scalar field $\phi$ to be almost a characteristic function\cite{Friedberg1977-1,Goldflam1982}.  
\end{remark}
We get a result to the one of Theorem \ref{gammalimit2} in the symmetric case. Let $1\leq k_1\leq\dots\leq k_N\leq K$ be integers. We define for $\epsilon>0$: 
\[
	\mathcal{E}_{\epsilon,k_1,\dots,k_N}(\phi)=\left\{
	\begin{array}{ll}
		\sum_{i=1}^N\lambda^{k_i}_+(H_\phi)+E_\epsilon(\phi) 	&\text{if }\phi \in H^1_{rad}(\mathbb{R}^3,\mathbb{R})\\
		+\infty																					&\text{otherwise}
	\end{array}
	\right.
\]
and 
\[
	\mathcal{E}_{0,k_1,\dots,k_N}(\phi)=\left\{
	\begin{array}{ll}
		\sum_{i=1}^N\lambda^{k_i}_+(H_{-\chi_\Omega})+E_0(-\chi_\Omega) 	&\text{if }\phi=-\chi_\Omega \in BV_{rad}(\mathbb{R}^3,\mathbb{R})\\
		+\infty																					&\text{otherwise.}
	\end{array}
	\right.
\]

\begin{theorem}\label{gammalimit3}
	 Assume the condition of Proposition \ref{gammalimit} true and that $g\in(0,m)$. Then, $\mathcal{E}_{\epsilon,k_1,\dots,k_N}$ $\Gamma-$ converges to $\mathcal{E}_{0,k_1,\dots,k_N}$ in $L^2\cap L^{\frac{3(q+2)}{4}}.$

	Let us assume besides that there are $c>0,$ $t_1<-1<t_2<0$ such that $W$ satisfies:
	\[
			W(t)\geq c|t|^q
	\]
	for all $t\notin(t_1,t_2)$ and 
	\[
		\inf\{\mathcal{E}_{0,K,\dots,K}(\phi):~\phi=-\chi_\Omega \in BV_{rad}\}<Nm.
	\]
	Then, there is $\epsilon_0>0$ such that for all $0<\epsilon<\epsilon_0,$ the problem 
	\begin{equation}\label{lsepsilonex}
	l_s^\epsilon(k_1,\dots,k_N)=\inf\{\mathcal{E}_{\epsilon,k_1,\dots,k_N}(\phi):~\phi \in H^1_{rad}\}<Nm
	\end{equation}
	has a minimum $\phi_\epsilon.$ There is a subsequence such that:
	\[
		\left\{\begin{array}{l}
			\mathcal{W}\circ\phi_{\epsilon_n}\rightarrow \mathcal{W}\circ(-\chi_\Omega)~\text{strictly in}~BV\\
			\phi_{\epsilon_n}\rightarrow (-\chi_\Omega)~\text{strongly in}~L^p~\text{for}~p\in[2,\frac{3(q+2)}{4}]\\
			l_s^{\epsilon_n}(k_1,\dots,k_N)\rightarrow l_c(k_1,\dots,k_N)
		\end{array}\right.
	\]
	where $-\chi_\Omega$ is a minimum  of problem:
	\begin{equation}\label{lcex}
		l_c(k_1,\dots,k_N) = \inf\{\mathcal{E}_{0,k_1,\dots,k_N}(\phi):~\phi=-\chi_\Omega \in BV_{rad}\}.
	\end{equation}
\end{theorem}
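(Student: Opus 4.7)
The plan is to adapt the strategy of Theorem \ref{gammalimit2} to the radial setting, where the main simplification is that compact embeddings of radial Sobolev and $BV$ spaces remove the need for the concentration-compactness arguments on translations used in the non-symmetric case. The proof divides naturally into the $\Gamma$-convergence statement and the convergence of minimizers.

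For the $\Gamma$-convergence, I would write $\mathcal{E}_{\epsilon,k_1,\dots,k_N}(\phi)=\sum_{i=1}^{N}\lambda^{k_i}_+(H_\phi)+E_\epsilon(\phi)$ and apply Proposition \ref{gammalimit} to the $E_\epsilon$-piece, observing that the recovery sequences built there can be taken radial whenever the target $-\chi_\Omega$ is radial, so that $E_\epsilon\to E_0$ in the $\Gamma$-sense on the radial subspace. It then remains to prove that $\phi\mapsto\sum_i\lambda^{k_i}_+(H_\phi)$ is continuous for the $L^2\cap L^{3(q+2)/4}$ topology. Since $L^2\cap L^{3(q+2)/4}\hookrightarrow L^3$ by interpolation under the assumption $q>2$, Sobolev embedding yields $\|g\beta(\phi_n-\phi)u\|_{L^2}\leq C\|\phi_n-\phi\|_{L^3}\|u\|_{H^1}$, so that strong convergence $\phi_n\to\phi$ implies norm-resolvent convergence of the self-adjoint operators $H_{\phi_n}$ to $H_\phi$; combined with Lemma \ref{lemma:selfadjoint} (the essential spectrum remains outside $(-m,m)$), this forces continuous dependence of the finitely many eigenvalues of $H_{\phi}$ inside $(-m,m)$, and hence of each $\lambda^{k_i}_+\in[0,m]$. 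The $\Gamma$-liminf and $\Gamma$-limsup statements then follow by combining this continuity with the Modica--Mortola conclusions of Proposition \ref{gammalimit}.

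For the second part, applying the $\Gamma$-limsup to a minimizer $-\chi_\Omega$ of $\mathcal{E}_{0,K,\dots,K}$ furnishes a radial sequence $\phi_\epsilon\in H^1_{rad}$ with $\mathcal{E}_{\epsilon,K,\dots,K}(\phi_\epsilon)\to\mathcal{E}_{0,K,\dots,K}(-\chi_\Omega)<Nm$; since $\lambda^{k_i}_+\leq\lambda^{K}_+$, the same strict upper bound transfers to $l_s^\epsilon(k_1,\dots,k_N)$. Existence of a minimizer of \eqref{lsepsilonex} for each small $\epsilon$ proceeds by the direct method: the $b|\phi|^2$ and $|\nabla\phi|^2$ terms give $H^1$-boundedness of a minimizing sequence, the Strauss--Lions compact embedding $H^1_{rad}(\mathbb{R}^3)\hookrightarrow L^p(\mathbb{R}^3)$ for $2<p<6$ supplies the strong convergence required by the eigenvalue continuity step, and lower semicontinuity closes the gradient contribution. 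To pass to the limit $\epsilon\to 0$, one extracts a subsequence using that the $\Gamma$-liminf half of Proposition \ref{gammalimit} produces uniform $BV$-bounds on $\mathcal{W}\circ\phi_\epsilon$ while the $b|\phi|^2$ term produces a uniform $L^2$-bound; radial $BV$-compactness combined with interpolation against the uniform energy bound yields the strong $L^p$ convergence on $[2,3(q+2)/4]$, strict $BV$-convergence of $\mathcal{W}\circ\phi_{\epsilon_n}$ follows from the matching of limit energies $l_s^{\epsilon_n}\to l_c(k_1,\dots,k_N)$, and the eigenvalue part converges by the continuity proved above.

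The main obstacle, just as in Theorem \ref{gammalimit2}, is to prevent the eigenvalues $\lambda^{k_i}_+(H_{\phi_\epsilon})$ from escaping into the essential spectrum at the threshold $m$ in the limit: this is precisely where the hypothesis on $l_c(K,\dots,K)<Nm$ is used, to confine the minimizing configurations to an energy region where each $\lambda^{k_i}_+$ stays safely inside the spectral gap $(0,m)$. Radial symmetry removes the translation-invariance headaches of the non-symmetric case, but one must still verify that the $BV_{rad}$ limit of $\chi_{\Omega_n}$ is a genuine radial characteristic function of finite perimeter; this is supplied by the strict $BV$-convergence of $\mathcal{W}\circ\phi_{\epsilon_n}$ together with the structure theorem underlying Proposition \ref{gammalimit}.
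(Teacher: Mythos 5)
Your proposal is correct and follows essentially the same route as the paper: establish the $\Gamma$-convergence by adding the eigenvalue sum (whose continuity under $L^3$-convergence the paper obtains via Proposition \ref{auxiliary_convergence}, and you via the equivalent norm-resolvent argument) to the Modica--Mortola $\Gamma$-limit of Proposition \ref{gammalimit}, deduce $\limsup_{\epsilon\to 0}l_s^\epsilon(k_1,\dots,k_N)\leq l_c(K,\dots,K)<Nm$, obtain minimizers $\phi_\epsilon\in H^1_{rad}$ via Strauss--Lions compactness, and pass to the limit with the radial $BV$-compactness of Proposition \ref{prop:BVcompactness}. The only minor imprecision is that the endpoint $p=2$ of the strong $L^p$-convergence is not obtained from the radial decay and interpolation alone (those only control the tail for $p>3(q+2)/4$); it comes from the energy-matching that forces $\|\phi_{\epsilon_n}\|_{L^2}^2\to|\Omega|$, which your appeal to strict $BV$-convergence implicitly supplies.
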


\subsection{The M.I.T. bag limit}

We study in this paper the M.I.T. bag ground state problem in the spherical case, i.e. when the open set $\Omega$ is a ball and the wave function belongs to $H^1_{sym}(\Omega,\mathbb{C}^4)$. Indeed, our main goal in this section is to give a rigorous proof of the original derivation of the M.I.T. bag equations done by Chodos, Jaffe, Johnson, Thorn and Weisskopf \cite{MIT061974} via a limit of bag approximation ground state solutions in the spherical case.

We assume in this section that $a,b\in\mathbb{R}^+$ and $\max\{a,b\}>0$.

\begin{proposition}\label{prop:MITgroundstate}
	There is a minimizer $R>0$ of 	
	\[
	\inf\left\{N\lambda^1_{MIT}(B(0,R))+aP(B(0,R))+b|B(0,R)|:R>0\right\}.
	\]
\end{proposition}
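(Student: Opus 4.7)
The plan is to reduce to the one-variable minimization problem
\[
 f(R) := N\lambda^1_{MIT}(B(0,R)) + 4\pi a R^2 + \tfrac{4\pi}{3} b R^3,\qquad R\in(0,\infty),
\]
and prove that $f$ is continuous on $(0,\infty)$ and coercive at both endpoints, so that a minimizer exists by the direct method on the locally compact half-line.

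\textbf{Step 1: scaling.} The first step is to reduce $\lambda^1_{MIT}(B(0,R))$ to a problem on the unit ball. For $\psi\in H^1_{sym}(B(0,R),\mathbb{C}^4)$ set $\tilde\psi(y) := R^{3/2}\psi(Ry)$; the change of variables preserves the $L^2$-norm, the symmetric ansatz \eqref{soler}, and the MIT boundary condition (which is pointwise and homogeneous of degree $0$ under dilations of $n$). A direct computation gives
\[
 \lambda^1_{MIT}(B(0,R)) = \frac{1}{R}\,\mu(Rm),
\]
where $\mu(\tau)$ denotes the first positive eigenvalue of the MIT bag Dirac operator with mass $\tau$ on $B(0,1)$, given by Proposition \ref{prop:supersymmetryMIT1} applied with $m$ replaced by $\tau$.

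\textbf{Step 2: continuity in $R$.} I would show that $\tau\mapsto \mu(\tau)$ is continuous on $[0,\infty)$. Since the family of self-adjoint operators $-i\alpha\cdot\nabla+\tau\beta$ on the fixed domain $\mathcal{D}(H_0)\subset H^1_{sym}(B(0,1),\mathbb{C}^4)$ differs only by the bounded, continuously varying perturbation $\tau\beta$, it is a norm-resolvent-continuous (in fact analytic) family. By Proposition \ref{prop:supersymmetryMIT1} the spectrum is discrete and symmetric about $0$, hence the lowest positive eigenvalue depends continuously on $\tau$. Composing with $R\mapsto Rm$ and dividing by $R$ yields continuity of $R\mapsto\lambda^1_{MIT}(B(0,R))$ on $(0,\infty)$.

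\textbf{Step 3: behavior at the endpoints.} As $R\to 0^+$, we have $\tau=Rm\to 0$, and $\mu(0)>0$ is the first positive eigenvalue of the massless MIT bag operator on $B(0,1)$ (strictly positive because Proposition \ref{prop:supersymmetryMIT1} gives $\lambda^1_{MIT}>m$ for any positive mass and one can pass to the limit, or argue directly from the boundary condition), so $\lambda^1_{MIT}(B(0,R)) = \mu(Rm)/R\to+\infty$ and therefore $f(R)\to+\infty$. For $R\to+\infty$, the estimate $m<\lambda^1_{MIT}(B(0,R))$ from Proposition \ref{prop:supersymmetryMIT1} gives a uniform lower bound, while an upper bound $\lambda^1_{MIT}(B(0,R))\le C$ (independent of $R\ge 1$, say) can be obtained by inserting a fixed test function supported in $B(0,1)\subset B(0,R)$ and satisfying the boundary condition in the min-max for $\mu(\tau)/\tau$ --- or more simply, by noting that the scaled eigenvalue $\mu(Rm)$ grows at most linearly in $Rm$, a fact that follows from the min-max with a concentrated trial function. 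Combined with $\max\{a,b\}>0$, the perimeter and volume terms force $f(R)\to+\infty$ as $R\to+\infty$.

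\textbf{Step 4: conclusion.} The continuous function $f:(0,\infty)\to\mathbb{R}$ tends to $+\infty$ at both ends, hence attains its infimum at some $R^*\in(0,\infty)$, which is the desired minimizer. The main technical point is Step 2, specifically the continuous dependence of the MIT-bag first positive eigenvalue on the mass parameter; once this is granted everything else is elementary analysis. The upper bound on $\lambda^1_{MIT}(B(0,R))$ needed for Step 3 is the second, but minor, point of care, and is handled by a compactly supported trial function argument.
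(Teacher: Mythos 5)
Your proof is correct and rests on the same core observation as the paper's---the scaling relation between $\lambda^1_{MIT}(B(0,R))$ and a fixed-domain problem---but it closes the argument differently. The paper uses the identity $\|D\omega\|^2_{L^2(B(0,R))}=\|\boldsymbol\sigma.\nabla\omega\|^2_{L^2(B(0,R))}+m^2+m\|\omega\|^2_{L^2(\partial B(0,R))}$ and the scaling $\omega_R(x)=R^{-3/2}\omega(x/R)$ to assert that $R\mapsto\lambda^1_{MIT}(B(0,R))$ is a convex decreasing function with the stated limits at $0^+$ and $+\infty$, and then concludes existence \emph{and uniqueness} by strict convexity of the total energy. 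You instead write $\lambda^1_{MIT}(B(0,R))=\mu(Rm)/R$ with $\mu(\tau)$ the first positive eigenvalue on $B(0,1)$ with mass $\tau$, deduce continuity of $\mu$ from the fact that $\tau\mapsto -i\alpha\cdot\nabla+\tau\beta$ is an analytic family on the fixed domain $\mathcal D(H_0)$, and then invoke the direct method on the half-line. Your route gives only existence, not uniqueness, but it is in some respects cleaner: it does not appeal to convexity of $\lambda^1_{MIT}$ in $R$ (a claim that, while plausible from the scaling identity, requires more care, since an infimum over $\omega$ of convex functions of $R$ need not be convex), and continuity via regular perturbation theory is immediate. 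One small remark: your bounded-ness discussion for $\lambda^1_{MIT}(B(0,R))$ as $R\to+\infty$ is not actually needed for coercivity, since the positivity $\lambda^1_{MIT}>m$ together with $\max\{a,b\}>0$ already forces $f(R)\to+\infty$; it only serves to confirm that $f$ is finite somewhere, which follows trivially from finiteness at any fixed $R_0$.
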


\begin{theorem}\label{theo:MITlimit}
	 Let $(M_n)_n\subset (0,+\infty)$ be an increasing sequence such that:
	\[
		\underset{n\rightarrow+\infty}{\lim} M_n=+\infty.
	\]
	There are $C_0,~ n_0>0,$ and for $n\geq n_0,$  a minimizer $R_n>0$ of 
	\[
		l_n:=\inf\left\{N\lambda^{1}_+(H^n_{B(0,R)})+aP(B(0,R))+b|B(0,R)|:R>0\right\}\leq C_0,
	\]
	a function $\psi_n\in H^{1}_{sym}(\mathbb{R}^3,\mathbb{C}^4)$ satisfying:
	\[
		\left\{\begin{array}{ll}
			H^n_{B(0,R_n)}\psi_n 	&= \lambda^{1}_+(H^n_{B(0,R_n)})\psi_n\\
			\|\psi_n\|_{L^2}			&=1,
		\end{array}\right.
	\]
	where $H^n_{\Omega}=-i\alpha.\nabla+\beta(m\chi_\Omega+M_n\chi_{\Omega^c}),$ such that, up to a subsequence:
	\[
		\left\{\begin{array}{l}
			R_{n}\rightarrow R>0,\\
			l_n\rightarrow \inf\left\{N\lambda^1_{MIT}(B(0,r))+aP(B(0,r))+b|B(0,r)|:r>0\right\},\\
			\psi_{n}\rightarrow \psi\chi_{B(0,R)}~\text{in}~L^2(\mathbb{R}^3)~ \text{and in}~L^\infty(B(0,R+\epsilon)^c\cup B(0,R-\epsilon))				\end{array}\right.
	\]
	for all $0<\epsilon<R/2$. $R$ comes from Proposition \ref{prop:MITgroundstate}, the function $\psi\in H^1_{sym}(B(0,R),\mathbb{C}^4)$ satisfies:
	\[
	\left\{
	\begin{array}{lll}	
		H_0\psi 			&= \lambda^1_{MIT}(B(0,R))\psi		&\text{on}~B(0,R)\\
		-i\beta(\alpha.n)\psi 	&= \psi			&\text{on}~\partial B(0,R)\\
		\|\psi\|_{L^2(B(0,R))}&=1.
	\end{array}	
	\right.	
	\]
\end{theorem}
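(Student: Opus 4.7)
The overall strategy is to view the theorem as a large-mass limit of the bag approximation problem: rewriting
\[
H^n_\Omega = H_0 + \beta(M_n-m)\chi_{\Omega^c},
\]
we see that the additional term acts as a penalty outside $\Omega$ that forces wave functions to concentrate inside and, in the limit, produces the M.I.T. boundary condition $-i\beta(\alpha\cdot n)\psi=\psi$ on $\partial\Omega$. The spherical symmetry reduces everything to a first-order radial ODE system for $(u,v)$ from the ansatz \eqref{soler}, and the minimization of $l_n$ becomes a one-dimensional problem in $R>0$.

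First I would prove existence of $R_n$ together with the uniform bound $l_n\leq C_0$. For any fixed radius, the map $n\mapsto \lambda^1_+(H^n_{B(0,R)})$ is nondecreasing and, by a min--max comparison in which M.I.T. eigenfunctions are extended by zero across $\partial B(0,R)$, converges up to $\lambda^1_{MIT}(B(0,R))$. Plugging the M.I.T. bag minimizer $R_\ast$ from Proposition \ref{prop:MITgroundstate} into $l_n$ yields $l_n\leq N\lambda^1_{MIT}(B(0,R_\ast))+aP(B(0,R_\ast))+b|B(0,R_\ast)|=:C_0$. Continuity of $R\mapsto \lambda^1_+(H^n_{B(0,R)})$, its blow-up as $R\to 0$ (by a Hardy-type estimate in the spherical Dirac system), and the volume term $b|B(0,R_n)|$ blowing up as $R\to\infty$, confine any minimizing sequence in $R$ to a compact subset of $(0,\infty)$ and provide $R_n$.

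Next, from the monotone pointwise convergence $\lambda^1_+(H^n_{B(0,R)})\nearrow \lambda^1_{MIT}(B(0,R))$, which is uniform on compact subsets of $(0,\infty)$ by Dini's theorem applied to the continuous increasing sequence, I would extract a subsequence with $R_n\to R>0$ and $l_n\to N\lambda^1_{MIT}(B(0,R))+aP(B(0,R))+b|B(0,R)|$; the limit $R$ is then a minimizer of the M.I.T. problem. For the eigenfunctions, outside $B(0,R_n)$ an elementary ODE-based decay estimate (or a Combes--Thomas argument) gives exponential decay of $\psi_n$ at rate $\sqrt{M_n^2-\lambda_n^2}\to\infty$, hence $\psi_n\to 0$ uniformly on $B(0,R+\epsilon)^c$. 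Inside $B(0,R_n)$, uniform $H^1_{loc}$ estimates from the eigenvalue equation together with Sobolev embedding yield, up to a subsequence, $\psi_n\to\psi$ strongly in $L^2(\mathbb{R}^3)$ and in $L^\infty(B(0,R-\epsilon))$, with $\psi\in H^1(B(0,R),\mathbb{C}^4)$ solving $H_0\psi=\lambda^1_{MIT}(B(0,R))\psi$; the $L^2$ normalisation is preserved because the exterior decay prevents any mass loss at infinity.

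The hard part will be identifying the M.I.T. boundary condition on $\partial B(0,R)$. Writing $\psi_n$ in the ansatz form with radial components $(u_n,v_n)$, the Dirac system matches inside and outside at $r=R_n$ through the continuity of $u_n$ and $v_n$; the two linearly independent exterior solutions decay and grow at rate $\sqrt{M_n^2-\lambda_n^2}$, and only the decaying one is $L^2$, forcing a definite sign relation between $u_n(R_n)$ and $v_n(R_n)$. A careful inspection of this relation, using the explicit form of the spherical Dirac exterior solutions, gives $v_n(R_n)-u_n(R_n)=O(1/M_n)$, and passing to the limit yields $u(R)=v(R)$, which is exactly the spherical form of $-i\beta(\alpha\cdot n)\psi=\psi$. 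Once this trace identity is secured, the full M.I.T. system follows from the interior equation and the $H^1$ regularity of $\psi$ inside $B(0,R)$, completing the proof.
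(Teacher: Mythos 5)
The overall architecture of your argument (a priori bound, compactness of $R_n$, exterior decay, identification of the M.I.T.\ boundary condition via the exterior ODE) parallels the paper, and the last step — reading off $u_n(R_n)/v_n(R_n)\to 1$ from the explicit exterior spherical Dirac solutions — is exactly what the paper does. But there are two genuine gaps in the way you set up the comparison with the M.I.T.\ problem.

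First, your claim that one may bound $\lambda^1_+(H^n_{B(0,R)})$ by \emph{extending an M.I.T.\ eigenfunction by zero across $\partial B(0,R)$} is not valid: the M.I.T.\ boundary condition $-i\beta(\alpha\cdot n)\psi=\psi$ is not a Dirichlet condition, so the eigenfunction has a nonzero trace on $\partial B(0,R)$ and its zero extension lies neither in $H^1(\mathbb{R}^3)$ nor in the form domain of $H^n$. The paper avoids this in two different ways at two different places: for the a priori bound $l_n\le C_0$ it tests with a Dirichlet-Laplacian eigenfunction \emph{compactly supported inside} $B(0,R)$ (so the potential term is just $m^2$, uniformly in $n$), and only at the very end it extends the M.I.T.\ minimizer across $\partial B(0,R)$ by a decaying tail $\propto\exp(-M_n(r-R))$, which is in $H^1(\mathbb{R}^3)$ and whose exterior contribution to $\|D_{\phi_n}\cdot\|^2_{L^2}$ vanishes as $M_n\to\infty$. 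Related to this, you assert monotonicity of $n\mapsto\lambda^1_+(H^n_{B(0,R)})$ and invoke Dini's theorem, but in the supersymmetric form
\[
\|D_{\phi_n}\omega\|^2_{L^2}=\|\boldsymbol{\sigma}.\nabla\omega\|^2+m^2\|\omega\|^2_{L^2(\Omega)}+M_n^2\|\omega\|^2_{L^2(\Omega^c)}+(M_n-m)\int_{\partial\Omega}\omega^*(\boldsymbol{\sigma}.n)\omega\,dz,
\]
the boundary term has no definite sign, so the quadratic form is not pointwise monotone in $M_n$ and the monotonicity (and hence uniform convergence) of the ground eigenvalue would need a separate argument; the paper does not rely on it at all.

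Second, once you have passed to the limit inside $B(0,R)$, obtaining an $H^1$ function $\psi$ solving $H_0\psi=\lambda\psi$ with the M.I.T.\ boundary condition only tells you that $\lambda$ lies in $\sigma(H_0,\mathcal{D}(H_0))$; it does not show $\lambda=\lambda^1_{MIT}(B(0,R))$, nor that $R$ minimizes the M.I.T.\ energy. Closing this requires exactly the competitor construction above: inserting the exponentially extended M.I.T.\ minimizer into $l_n$ gives $\limsup l_n\le\inf_r\{N\lambda^1_{MIT}(B(0,r))+aP(B(0,r))+b|B(0,r)|\}$, while the lower semicontinuity you have established gives $\liminf l_n\ge N\lambda+aP(B(0,R))+b|B(0,R)|\ge N\lambda^1_{MIT}(B(0,R))+\cdots$, and the two squeeze out the optimality. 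Your write-up stops at the trace identity $u(R)=v(R)$ and leaves this final identification unaddressed.
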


\begin{remark}
Chodos, Jaffe, Johnson, Thorn and Weisskopf impose to the ground state cavity to be a ball, just as in Theorem \ref{theo:MITlimit}. Nevertheless, if we want to remove this restriction, some difficulties occur. We will point out in our proof where the problems arise. 
\end{remark}

The key point of all this paper is the use of supersymmetry properties of the Dirac operator studied in the second section. We give in the third section, some auxiliary results related to the continuity of the eigenvalues of $H_\phi$ in $\phi$. We prove the existence theorems for the soliton bag and the bag approximation models in the symmetric case in the fourth section. In the fifth section, the pre-compactness, up to translation, of minimizing sequences for the existence Theorems \ref{maintheorem2} and \ref{maintheorem4} follows from the concentration-compactness method. Supersymmetry allows us to get rid of the problems occurring with the constraints on the sign of the eigenvalues $\lambda$ of the operators $H_\phi$ and gives the binding inequalities necessary in the concentration-compactness argument. The sixth section is related to the proofs of Proposition \ref{gammalimit}, Theorems \ref{gammalimit2} and \ref{gammalimit3}, which are based on $\Gamma$-convergence and concentration compactness method. Finally, we give the first rigorous proof of the derivation of the M.I.T. bag equations in the last section.

\section{Supersymmetry of the Dirac operator and spectral properties} \label{secsupersym}
The variational formulations \eqref{variationalformulation} and \eqref{variationalformulationcavity} are not satisfactory because the definitions of the eigenvalues $\lambda^k_+(H_\phi)$ and $\lambda^k_{MIT}(\Omega)$ are not easy to handle for $k>0,$ $\Omega$ an open set of $\mathbb{R}^3$ and $\phi\in L^p$ for some $p\geq 3.$ Nevertheless, the supersymmetry theory for Dirac operators with scalar potentials will allow us to overcome these problems. We strongly use in this part the introduction to the theory of Thaller \cite[Chapter 5]{Thaller1992}. 

\begin{definition}
Let $\tau$ be a non-trivial unitary involution on a Hilbert space $\mathcal{H}$. A self-adjoint operator $Q$ on $\mathcal{H}$ with domain $\mathcal{D}(Q)$ is a supercharge with respect to $\tau$ if $\tau\mathcal{D}(Q)\subset\mathcal{D}(Q)$ and $\tau Q = -Q\tau$ on $\mathcal{D}(Q)$.
\end{definition}

\subsection{The Dirac operator with scalar potential on $\mathbb{R}^3$}
We begin by a study of Dirac operators on $L^2(\mathbb{R}^3,\mathbb{C}^4)$ with a special type of potentials, the scalar ones. Let $\phi$ be in $L^p(\mathbb{R}^3,\mathbb{R})$ for some $3\leq p<+\infty$. We define: 
\[
T:=\frac{1}{\sqrt{2}}\left(\begin{array}{cc} {I}_2&i{I}_2\\i{I}_2&{I}_2\end{array}\right), ~ D_{\phi} := -i\boldsymbol{\sigma}.\nabla+i(m+g\phi), ~D^*_{\phi} := -i\boldsymbol{\sigma}.\nabla-i(m+g\phi)
\]
and
\[
Q_\phi:= TH_{\phi}T^{-1}= \left(\begin{array}{cc}0&D_{\phi}^*\\D_{\phi}&0\end{array}\right)
\]
where we recall that $H_\phi = H_0+g\beta\phi$.
Then, by Lemma \ref{lemma:selfadjoint}, $Q_\phi$ is a supercharge with respect to the involution $\beta$ whose domain is $\mathcal{D}(Q_\phi) = T\mathcal{D} (H_\phi)$. It can be either $H^1(\mathbb{R}^3,\mathbb{C}^4)$ in the non-symmetric case or $H^1_{sym}(\mathbb{R}^3,\mathbb{C}^2)^2$ in the symmetric case where we denote by $H^1_{sym}(\mathbb{R}^3,\mathbb{C}^2)$ the subset of $H^1(\mathbb{R}^3,\mathbb{C}^2)$ whose functions are of the form:
\[
x\mapsto\left(\begin{array}{c}
-v(r)+u(r)cos\theta\\
u(r)sin\theta e^{i\varphi}
\end{array}\right),
\]
in the spherical coordinates $(r,\theta,\varphi)$ of $x$.

\begin{remark}
The fact that the potential $g\beta\phi$ of $H_\phi$ is scalar is a key point for 
$Q_\phi$ to be a supercharge with respect to $\beta$.
\end{remark}

$D_\phi$ is a closed operator on $L^2(\mathbb{R}^3,\mathbb{C}^2)$ with domain $H^1$ such that $D_{\phi}^*$ is its adjoint and vice versa. Let us remark moreover that:
\[
Q^2_\phi = \left(\begin{array}{cc}{D_{\phi}}^*D_\phi&0\\0&D_{\phi}{D_{\phi}}^*\end{array}\right),
\]
is a self-adjoint operator on $L^2$ with domain
\[
\mathcal{D}(Q^2_\phi)=\{\psi\in H^1: Q_\phi\psi\in H^1\},
\]
which can be different from $H^2$ if $\phi$ is not regular enough.

In the following lemma, we show that under some conditions on $\phi$, $0$ is not in the spectrum of $H_\phi.$
\begin{lemma}\label{supersymmetry_kernel}
Assume that $\phi\in L^p(\mathbb{R}^3,\mathbb{R})$ with $3\leq p<+\infty$ satisfies $m+g\phi\geq0$, then 
\[
	\ker(Q_\phi) = \ker({Q_\phi}^2) = \ker({D_{\phi}}^*D_\phi)\oplus \ker(D_{\phi}{D_{\phi}}^*)=\{0\}.
\]
\end{lemma}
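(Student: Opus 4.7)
The plan is to exploit the block structure of $Q_\phi^2$ together with the sign assumption $m+g\phi\geq 0$ to reduce everything to a Liouville-type argument for harmonic $L^2$ functions.

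First, since $Q_\phi$ is self-adjoint, the identity $\ker(Q_\phi)=\ker(Q_\phi^2)$ is immediate: if $\psi\in\mathcal{D}(Q_\phi^2)$ with $Q_\phi^2\psi=0$, then $\|Q_\phi\psi\|^2=\langle\psi,Q_\phi^2\psi\rangle=0$. Second, the block-diagonal formula for $Q_\phi^2$ displayed just above the lemma gives $\ker(Q_\phi^2)=\ker(D_\phi^*D_\phi)\oplus\ker(D_\phi D_\phi^*)$, and by the same polarization argument $\ker(D_\phi^*D_\phi)=\ker(D_\phi)$ and $\ker(D_\phi D_\phi^*)=\ker(D_\phi^*)$. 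So the task reduces to proving $\ker(D_\phi)=\{0\}=\ker(D_\phi^*)$.

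Suppose $u\in H^1(\mathbb{R}^3,\mathbb{C}^2)$ satisfies $D_\phi u=0$, i.e.\ $\boldsymbol{\sigma}\cdot\nabla u=(m+g\phi)u$. Pair both sides with $u$ in $L^2$. The pairing on the right is finite because $u\in H^1\hookrightarrow L^6$ gives $|u|^2\in L^3$ and $\phi\in L^p$ with $p\geq 3$, so $(m+g\phi)|u|^2\in L^1$ by H\"older. The operator $\boldsymbol{\sigma}\cdot\nabla$ is the sum of $\sigma_k\partial_k$ with the $\sigma_k$ constant self-adjoint matrices and $\partial_k$ skew-adjoint on $L^2$, hence $\boldsymbol{\sigma}\cdot\nabla$ is skew-adjoint and $\langle u,\boldsymbol{\sigma}\cdot\nabla u\rangle$ is purely imaginary. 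The right-hand side $\int(m+g\phi)|u|^2\,dx$ is real, so both vanish. Using the sign hypothesis $m+g\phi\geq0$, this forces $(m+g\phi)|u|^2=0$ a.e., and plugging back into $\boldsymbol{\sigma}\cdot\nabla u=(m+g\phi)u$ yields $\boldsymbol{\sigma}\cdot\nabla u=0$ in the distributional sense.

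Applying $\boldsymbol{\sigma}\cdot\nabla$ once more and using $(\boldsymbol{\sigma}\cdot\nabla)^2=\Delta$ gives $\Delta u=0$ in $\mathcal{D}'(\mathbb{R}^3)$. By elliptic regularity $u$ is smooth and harmonic on $\mathbb{R}^3$, and since $u\in L^2(\mathbb{R}^3)$ the Liouville theorem for $L^2$ harmonic functions (or the mean value property combined with Cauchy--Schwarz) forces $u\equiv0$. The argument for $D_\phi^*u=0$ is identical, replacing $m+g\phi$ by $-(m+g\phi)$ on the right-hand side; the pairing is still real on one side and imaginary on the other, and the same sign hypothesis produces $(m+g\phi)|u|^2=0$ and concludes.

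The only point that requires genuine care is the justification of the integration by parts and the pairing $\langle u,(m+g\phi)u\rangle$ when $\phi$ is merely $L^p$; this is where the form-domain statement of Lemma \ref{lemma:selfadjoint} together with the Sobolev embedding $H^1\hookrightarrow L^6$ in $\mathbb{R}^3$ is used. Everything else is algebraic manipulation of the supercharge structure and a standard Liouville argument, so I expect no serious obstacle beyond bookkeeping of regularity.
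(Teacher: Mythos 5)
Your proof is correct and follows essentially the same line as the paper: reduce to $\ker(D_\phi)=\ker(D_\phi^*)=\{0\}$, pair the equation with $\omega$ to see the scalar-potential term must vanish (you argue via skew-adjointness of $\boldsymbol\sigma\cdot\nabla$, the paper writes the same fact in divergence form), then conclude $\omega=0$ from the vanishing of $\boldsymbol\sigma\cdot\nabla\omega$. The only cosmetic difference is the last step: the paper uses the Fourier identity $\|\boldsymbol\sigma\cdot\nabla\omega\|_{L^2}=\|\nabla\omega\|_{L^2}$ directly, whereas you pass through $\Delta u=0$ and Liouville for $L^2$ harmonic functions; these are the same underlying computation.
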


\begin{proof}\smartqed
Let us assume that there exists $\omega\in H^1(\mathbb{R}^3,\mathbb{C}^2)$ such that 
\[
-i\sigma.\nabla\omega \pm i(m+g\phi)\omega.
\]
We get that
\[
\int_{\mathbb{R}^3}(m+g\phi)|\omega|^2dx = \pm\int_{\mathbb{R}^3}(\omega^*\sigma.\nabla\omega)dx = \pm\int_{\mathbb{R}^3}div(\omega^*\sigma\omega)dx = 0,
\]
hence, we have $(m+g\phi)|\omega|^2=0$ almost everywhere. Moreover, we have
\[
0=\int_{\mathbb{R}^3}(m+g\phi)^2|\omega|^2dx = \int_{\mathbb{R}^3}|\sigma.\nabla\omega|^2dx =\int_{\mathbb{R}^3}|\nabla\omega|^2dx.
\]
Thus, we have the result.
\qed\end{proof}
\begin{remark}
Let us remark that in the bag approximation case, if we assume that $g\in(0,m)$ we get for any $\chi_\Omega\in BV(\mathbb{R}^3,\mathbb{R})$ that $m+g(-\chi_\Omega)\geq0$. So, we have that $\ker(H_\phi)=\{0\}$. 
\end{remark}
The same result is true for the soliton bag model in the symmetric case. 
\begin{lemma}\label{simplicity}
Assume that $\phi\in L^{3+\epsilon}_{rad}$ for $\epsilon>0$. Then, every eigenvalue of $H_\phi$ is simple and $0\notin \sigma(H_\phi)$.
\end{lemma}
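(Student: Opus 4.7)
The plan is to reduce the eigenvalue problem to a radial ODE. Since $\phi$ is radial, $H_\phi$ preserves the Soler subspace, and for $\psi$ of the form \eqref{soler} a direct computation using the identities $\sigma\cdot\nabla(f(r)\sigma\cdot\hat{r}\,\chi_0) = (f'+2f/r)\chi_0$ and $\sigma\cdot\nabla(f(r)\chi_0)=f'\,\sigma\cdot\hat{r}\,\chi_0$ (with $\chi_0=(1,0)^{T}$) shows that $H_\phi\psi=\lambda\psi$ is equivalent to the first-order system
\[
\begin{cases}
u'+\frac{2u}{r}=(\lambda-m-g\phi)v,\\
v'=-(\lambda+m+g\phi)u
\end{cases}
\]
on $(0,+\infty)$, with a regular singular point at $r=0$. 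Since $\phi\in L^{3+\epsilon}_{rad}$ the coefficients lie in $L^1_{loc}((0,\infty))$, so the system is well-posed in the Carath\'{e}odory sense away from the origin.

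For the simplicity statement I would perform a Frobenius-type analysis at $r=0$: the two-dimensional space of germs splits into a singular branch with $u\sim c/r^{2}$, $v\sim c'/r$, which violates $u\in L^{2}(r^{2}\,dr)$ and is therefore excluded by membership in $H^{1/2}_{sym}$, and a one-parameter regular family parametrised by $v_0=v(0)$, with $u(0)=0$ forced. The admissible eigenfunctions thus form a space of dimension at most one, yielding simplicity of every eigenvalue restricted to the symmetric subspace.

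For the claim $0\notin\sigma(H_\phi)$, I would specialise the system to $\lambda=0$ and exploit the cancellation of the mass term: a short computation gives
\[
\frac{d}{dr}(u^{2}-v^{2})=2uu'-2vv'=-\frac{4u^{2}}{r}\leq 0,
\]
so $u^{2}-v^{2}$ is non-increasing on $(0,+\infty)$. For a putative zero mode the Frobenius step forces $u(0)=0$, whence $(u^{2}-v^{2})(0^{+})=-v_0^{2}$, while the $L^{2}$ condition on $\psi$ together with $v'=-(m+g\phi)u$ (locally in $L^2$) forces $u(r),v(r)\to 0$ as $r\to+\infty$, so $(u^{2}-v^{2})(+\infty)=0$. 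Monotonicity then imposes $-v_0^{2}\geq 0$, hence $v_0=0$, and Carath\'{e}odory uniqueness for the linear system on $(0,+\infty)$ yields $u\equiv v\equiv 0$. The main obstacle is to make these asymptotics rigorous for $\phi$ merely in $L^{3+\epsilon}$: one must justify the Frobenius expansion with non-continuous coefficients, derive the pointwise decay of $(u,v)$ at infinity from weighted integrability plus the ODE, and establish uniqueness near the singular point $r=0$. An approximation of $\phi$ by smooth radial functions together with standard integral-equation bounds should take care of these points.
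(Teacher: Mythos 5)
Your simplicity argument is essentially the paper's: the paper likewise reduces to the radial first-order system for $(u,v)$, recasts it as a Volterra integral equation whose only free datum is $v(0)$, and invokes a contraction-mapping uniqueness argument (citing \cite{balabane1990}) to conclude that the admissible eigenspace is one-dimensional; your Frobenius branching captures the same mechanism, and the regularity issues you flag about $\phi\in L^{3+\epsilon}_{rad}$ are handled in the paper by the initial bootstrap putting eigenvectors in $W^{1,q}\cap L^\infty$, after which the integral formulation is well-defined. You genuinely diverge, however, on the proof that $0\notin\sigma(H_\phi)$. The paper's proof is algebraic and supersymmetric: a nonzero zero mode $\psi$ gives $Q_\phi T\psi=0$, and the decomposition $\ker Q_\phi=\ker(D_\phi^*D_\phi)\oplus\ker(D_\phi D_\phi^*)$ with both summands nontrivial forces $\dim\ker H_\phi\ge2$, contradicting the simplicity just established. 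Your route instead specialises the radial system to $\lambda=0$ and exploits the monotone quantity $\frac{d}{dr}(|u|^2-|v|^2)=-4|u|^2/r\le0$ (the general identity reads $\frac{d}{dr}(|u|^2-|v|^2)=4\lambda\,\mathcal{R}e(\bar{u}v)-4|u|^2/r$, so the sign is special to $\lambda=0$; it does survive for complex-valued $(u,v)$ since only $\mathcal{R}e(\bar{u}v)$ enters); combined with $u(0^+)=0$ from the regular branch and $(u,v)(+\infty)=0$, monotonicity forces $v(0)=0$ and hence $(u,v)\equiv0$ by uniqueness. Both proofs are correct. The paper's is shorter because it reuses the supersymmetric machinery and the simplicity statement it just proved; yours is more elementary and entirely ODE-based, but it requires you to justify the pointwise decay at infinity (Strauss' radial lemma gives this once $\psi\in H^1$ by bootstrap) and the boundary behaviour at $r=0$ with coefficients merely in $L^{3+\epsilon}_{rad}$ — which is precisely what the paper's integral-equation formulation and initial bootstrap are there to settle.
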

\begin{proof}\smartqed
Let $\lambda\in\sigma_p(H_\phi),$ by a standard bootstrap argument, every associated eigenvector belongs to $W^{1,q}$ for any  $q\geq2$ and so to $L^\infty.$ As we work with functions of the form:
\begin{equation*}
\psi(x) = \left(\begin{array}{c}v(r)\left(\begin{array}{c}1\\0\end{array}\right)\\iu(r)\left(\begin{array}{c}\cos\theta\\\sin\theta e^{i\varphi}\end{array}\right)\end{array}\right),
\end{equation*}
$(u,v)$ is a solution of the following system of equation:
\[
\left\{\begin{array}{lcr}
v'(r)&=&-(\lambda+m+g\phi(r))u(r),\\
u'(r)+\frac{2u(r)}{r}&=& (\lambda-m-g\phi(r))v(r)
\end{array}\right.
\]
and satisfies:
\[\left\{\begin{array}{l}
	u(r) = \frac{1}{r^2}(\int_0^rs^2v(s)(\lambda-m-g\phi(s))ds),\\
	v(r)=v(0)-\int_0^ru(s)(\lambda+m+g\phi(s))ds.
\end{array}\right.\]
By a contraction mapping argument \cite{balabane1990}, the solution is uniquely determined by $v(0)$. So the set of the eigenvectors of $H_\phi$ of eigenvalue $\lambda$ is of dimension $1$.

It remains to prove that $0$ does not belong to $\sigma(H_\phi)$. Let us assume by contradiction that there is $\psi\in H^1_{sym}(\mathbb{R}^3,\mathbb{C}^4)\backslash\{0\}$ such that $H_\phi\psi=0$. Then, we get that $Q_\phi T\psi=0$ and 
\[
T\psi = \left(\begin{array}{c}\omega_1\\\omega_2\end{array}\right)
\]
where $\omega_1,\omega_2\in H^1_{sym}(\mathbb{R}^3,\mathbb{C}^2)\backslash\{0\}$. Thus, we have
\[
	\left(\begin{array}{c}\omega_1\\0\end{array}\right),\left(\begin{array}{c}0\\\omega_2\end{array}\right)\in\ker Q_\phi
\]
and $0$ is not simple. This is impossible so $0\notin\sigma(H_\phi)$.
\qed\end{proof}

Let us give another lemma  in the non-symmetric case which study the case where $0\in\sigma(H_\phi)$ and ensures that $\lambda^k_+(H_\phi)$ is well-defined for all $k$.
\begin{lemma}\label{0eigen}
Assume that $\phi\in L^p(\mathbb{R}^3,\mathbb{R})$ for $3\leq p <+\infty$. Then, we have
\[
	\dim(\ker({D_\phi}^*{D_\phi})) = \dim(\ker({D_\phi}{D_\phi}^*))=\dim(\ker(H_\phi))/2=:d/2.
\]
\end{lemma}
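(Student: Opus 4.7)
The plan is to combine the supersymmetric structure already installed on $Q_\phi$ with an explicit anti-unitary ``charge-conjugation'' symmetry at the level of the Pauli operator $D_\phi$. Since $H_\phi = T^{-1} Q_\phi T$ with $T$ unitary, one has $\dim \ker H_\phi = \dim \ker Q_\phi$; reading off the off-diagonal block form of $Q_\phi$, a pair $(u,v)^t$ lies in $\ker Q_\phi$ if and only if $D_\phi u = 0$ and $D_\phi^* v = 0$, so
\[
\ker H_\phi \;\simeq\; \ker Q_\phi \;=\; \ker D_\phi \oplus \ker D_\phi^*.
\]
The standard self-adjointness identity $\langle D_\phi^* D_\phi u,u\rangle = \|D_\phi u\|^2$ (and its dual) yields $\ker(D_\phi^* D_\phi) = \ker D_\phi$ and $\ker(D_\phi D_\phi^*) = \ker D_\phi^*$, so the whole lemma reduces to the single equality $\dim \ker D_\phi = \dim \ker D_\phi^*$.

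To obtain this I introduce on $L^2(\mathbb{R}^3,\mathbb{C}^2)$ the antilinear map $J u := \boldsymbol{\sigma}_2 \bar u$. It is an anti-unitary involution-up-to-sign ($J^2 = -I$, since $\boldsymbol{\sigma}_2 \bar{\boldsymbol{\sigma}}_2 = -I_2$) and preserves $H^1$. The key step is the intertwining identity
\[
J D_\phi \;=\; D_\phi^* J,
\]
which I would verify by a direct computation, using $\bar{\boldsymbol{\sigma}}_k = \boldsymbol{\sigma}_k$ for $k=1,3$, $\bar{\boldsymbol{\sigma}}_2 = -\boldsymbol{\sigma}_2$, $\boldsymbol{\sigma}_2 \boldsymbol{\sigma}_k = -\boldsymbol{\sigma}_k \boldsymbol{\sigma}_2$ for $k=1,3$, $\boldsymbol{\sigma}_2^2 = I_2$, and the reality of $m+g\phi$: complex conjugation flips the sign of the mass term $i(m+g\phi)$ while the combination of $\bar{\boldsymbol{\sigma}}_2 = -\boldsymbol{\sigma}_2$ with the anticommutations restores the kinetic term $-i\boldsymbol{\sigma}\cdot\nabla$. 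Once the intertwining is in hand, $J(\ker D_\phi) \subseteq \ker D_\phi^*$, and the symmetric argument with $J^{-1}$ gives the reverse inclusion, producing an antilinear bijection $\ker D_\phi \simeq \ker D_\phi^*$; such bijections preserve the (Hilbert) dimension of a complex space, so the desired equality follows.

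Assembling the pieces yields
\[
\dim \ker(D_\phi^* D_\phi) = \dim \ker(D_\phi D_\phi^*) = \dim \ker D_\phi = \dim \ker D_\phi^* = \tfrac{1}{2}\dim \ker H_\phi = d/2,
\]
as claimed; in particular $d$ must be even, justifying the definition of $\lambda^k_+(H_\phi)$ for all $k$. The only genuine obstacle is the sign-bookkeeping in the verification of $J D_\phi = D_\phi^* J$: it is essentially a standard charge-conjugation symmetry, but it is precisely at this step that the reality of $\phi$ and the scalar (as opposed to vectorial, electric, or complex) nature of the coupling enter in an essential way, in agreement with the Remark following the definition of $Q_\phi$.
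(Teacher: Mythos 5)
Your proof is correct and essentially matches the paper's: from a kernel element $\omega=(u,v)^{t}$ of $D_\phi$ the paper produces $\tilde\omega=(\bar v,-\bar u)^{t}\in\ker D_\phi^{*}$, which is exactly $iJ\omega$ for your conjugation $Ju=\boldsymbol{\sigma}_2\bar u$, and the argument is then symmetrized to get equality of dimensions. Packaging the paper's componentwise verification as the intertwining identity $JD_\phi=D_\phi^{*}J$ of an anti-unitary operator is a cleaner way to say it, but the computational content and the resulting dimension count are the same.
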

\begin{proof}\smartqed
	We have
	\[
		\ker(Q_\phi) = \ker({D_{\phi}}^*D_\phi)\oplus \ker(D_{\phi}{D_{\phi}}^*).
	\]
	We suppose that $0\in\sigma({D_\phi}^*{D_\phi})$ then, there is 
	\[
	\omega=\left(\begin{array}{c}u\\v\end{array}\right)\in H^1(\mathbb{R}^3,\mathbb{C}^2)\backslash\{0\}
	\]
	 such that
	\[
	-\sigma.\nabla\omega+(m+g\phi)\omega=0.
	\]	
	We get
	\begin{eqnarray*}
		-\partial_1v+i\partial_2v-\partial_3u+(m+g\phi)u=0\\
		-\partial_1u-i\partial_2u+\partial_3v+(m+g\phi)v=0
	\end{eqnarray*}
	and
	\begin{eqnarray*}
		-\partial_1\tilde{v}+i\partial_2\tilde{v}-\partial_3\tilde{u}-(m+g\phi)\tilde{u}=0\\
		-\partial_1\tilde{u}-i\partial_2\tilde{u}+\partial_3\tilde{v}-(m+g\phi)\tilde{v}=0
	\end{eqnarray*}
	where
		\[
	\tilde{\omega}=\left(\begin{array}{c}\tilde{u}\\\tilde{v}\end{array}\right) := \left(\begin{array}{c}\overline{v}\\-\overline{u}\end{array}\right).
	\]
	Hence, we get
		\[
	-\sigma.\nabla\tilde\omega-(m+g\phi)\tilde\omega=0
	\]	
	so $0\in\sigma({D_\phi}{D_\phi}^*)$.  This ensures that 
	\[
		\dim(\ker({D_\phi}^*{D_\phi})) \leq \dim(\ker({D_\phi}{D_\phi}^*))
	\]
	A similar argument gives us the inverse inequation and the result follows.
\qed\end{proof}
We are now able to write down the Foldy-Wouthuysen representation of our supercharge operator. This allows us to give simpler expressions for the eigenvalues.
\begin{theorem}\label{supersymmetry_foldy}
Define the unitary transformations : 
\[S = (\sqrt{D_{\phi}{D_{\phi}}^*})^{-1}D_{\phi} = D_\phi\sqrt{{D_{\phi}}^*D_\phi}^{-1}\text{~of~}\ker(D_\phi)^\perp~\text{onto}~\ker({D_\phi}^*)^\perp\]
and  
\[
sgn~ Q_\phi = \left(\begin{array}{cc}0&S^*\\S&0\end{array}\right) \text{~of~}\ker(Q_\phi)^\perp.
\]
We denote the Foldy-Wouthuysen transformation
\[
U_{FW} := \left\{\begin{array}{ll}
	\frac{1}{\sqrt{2}}(1+\beta(sgn~ Q_\phi)) &\text{~on~}\ker(Q_\phi)^\perp\\
	1&\text{~on~}\ker(Q_\phi).
	\end{array}\right.
\]
Then, we have:
\[
U_{FW}Q_\phi U_{FW}^*=\beta|Q_{\phi}| = \left(\begin{array}{cc}\sqrt{{D_{\phi}}^*D_\phi}&0\\0&-\sqrt{D_{\phi}{D_{\phi}}^*}\end{array}\right),
\]
and ${D_{\phi}}^*D_\phi=SD_{\phi}{D_{\phi}}^*S^*$ on $\ker(D_\phi)^\perp$. Moreover, we have:
\[
m^2 = \inf\sigma_{ess}({D_{\phi}}^*D_{\phi})
\]
and
\[
\sigma(H_\phi)= \left((-\infty,-m]\cup\underset{k\geq1}{\bigcup}\left\{\pm\sqrt{\lambda^{k}({D_{\phi}}^*D_{\phi})}\right\}\cup[m+\infty)\right)
\]
where 
\[
\lambda^k_+(H_\phi)= \sqrt{\lambda^{k}({D_{\phi}}^*D_{\phi})}:=\inf_{\substack{V\subset H^1,\\\dim V=k}}\; \sup_{\substack{\omega\in V,\\\|\omega\|_{L^2}=1}}\|D_{\phi}\omega\|_{L^2}.
 \]
\end{theorem}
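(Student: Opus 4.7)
The plan is to follow the supersymmetric machinery of Thaller \cite[Chapter 5]{Thaller1992}, with the polar decomposition of $D_\phi$ doing most of the work. First I would note that $D_\phi$ is closed and densely defined on $L^2(\mathbb{R}^3,\mathbb{C}^2)$ with adjoint $D_\phi^*$, so $|D_\phi| := \sqrt{D_\phi^* D_\phi}$ and $|D_\phi^*| := \sqrt{D_\phi D_\phi^*}$ are non-negative self-adjoint with the same domains as $D_\phi$ and $D_\phi^*$. The polar decomposition $D_\phi = V|D_\phi|$ produces a partial isometry $V=S$ with initial space $\ker(D_\phi)^\perp$ and final space $\ker(D_\phi^*)^\perp$; by Lemma \ref{0eigen} these subspaces have the same codimension in $L^2$, so $S$ extends to a unitary between them. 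The equivalent right polar decomposition $D_\phi = |D_\phi^*| S$ yields the second formula for $S$, and squaring these identities gives the unitary equivalence of $D_\phi^* D_\phi$ and $D_\phi D_\phi^*$ on the complements of their kernels, which is the stated intertwining.

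Next, since $Q_\phi$ is a supercharge with respect to $\beta$, the Borel functional calculus gives $\beta f(Q_\phi)=f(-Q_\phi)\beta$; with $f(x)=|x|$ this shows $[\beta,|Q_\phi|]=0$, and with $f=\mathrm{sgn}$ it shows $\{\beta,\mathrm{sgn}(Q_\phi)\}=0$ on $\ker(Q_\phi)^\perp$. The block form of $\mathrm{sgn}(Q_\phi)$ then follows from $Q_\phi=\mathrm{sgn}(Q_\phi)|Q_\phi|$ and the off-diagonal form of $Q_\phi$, with $S$ supplied by the previous step. The anticommutation immediately gives $U_{FW}^*U_{FW}=\tfrac12(1+\mathrm{sgn}(Q_\phi)\beta)(1+\beta\,\mathrm{sgn}(Q_\phi))=1$, so $U_{FW}$ is unitary on $\ker(Q_\phi)^\perp$, and the identity extension on $\ker(Q_\phi)$ makes it unitary on all of $L^2$. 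Expanding $U_{FW}Q_\phi U_{FW}^*$ and using $\mathrm{sgn}(Q_\phi)Q_\phi=|Q_\phi|$, $\beta Q_\phi\beta=-Q_\phi$, and $[\beta,|Q_\phi|]=0$ collapses the four cross terms into $\beta|Q_\phi|$, yielding the diagonalization.

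For the spectrum, the unitary chain $H_\phi = T^{-1} Q_\phi T$ and $Q_\phi = U_{FW}^*(\beta|Q_\phi|)U_{FW}$ transfers $\sigma(H_\phi)=\sigma(\beta|Q_\phi|)$; since $\beta$ acts as $\pm 1$ on the upper/lower components and commutes with $|Q_\phi|$, one gets $\sigma(\beta|Q_\phi|)=\sigma(|Q_\phi|)\cup(-\sigma(|Q_\phi|))$. Combining $Q_\phi^2=\mathrm{diag}(D_\phi^* D_\phi,\,D_\phi D_\phi^*)$ with the intertwining of the first paragraph gives $\sigma(|Q_\phi|)=\sqrt{\sigma(D_\phi^* D_\phi)}$. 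Squaring the essential-spectrum identity of Lemma \ref{lemma:selfadjoint} then yields $\sigma_{\mathrm{ess}}(Q_\phi^2)=[m^2,\infty)$, hence $\inf\sigma_{\mathrm{ess}}(D_\phi^* D_\phi)=m^2$. The displayed min-max is finally the classical Courant–Fischer characterization of the eigenvalues below the essential spectrum of the non-negative self-adjoint operator $D_\phi^* D_\phi$ on its form-domain $H^1$, using $(\omega,D_\phi^* D_\phi\omega)=\|D_\phi\omega\|_{L^2}^2$; the identification $\lambda_+^k(H_\phi)=\sqrt{\lambda^k(D_\phi^* D_\phi)}$ then follows from the spectral correspondence above together with the shift $k\mapsto k-d/2$ built into the definition of $\lambda_+^k$.

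The main obstacle I expect is the careful bookkeeping at zero. On $\ker(Q_\phi)^\perp$ the operator $\mathrm{sgn}(Q_\phi)$ squares to one and powers the whole construction, but on $\ker(Q_\phi)$ it must be separately set to the identity so that $U_{FW}$ stays unitary on all of $L^2$; likewise $S$ is only defined as a unitary between the orthogonal complements of the kernels of $D_\phi$ and $D_\phi^*$. The doubling $\dim\ker(H_\phi)=2\dim\ker(D_\phi^* D_\phi)=d$ coming from Lemma \ref{0eigen} must then be matched to the shift $k\mapsto k-d/2$ in the definition of $\lambda_+^k(H_\phi)$, so that the min-max index for $D_\phi^* D_\phi$ lines up with the index for $H_\phi$ used in the variational problems \eqref{variationalformulation}--\eqref{variationalformulationcavity}.
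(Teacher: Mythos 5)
Your proposal is correct and follows essentially the same route as the paper, which itself just sketches the argument and refers to Thaller \cite[Theorem 5.5, Corollary 5.6]{Thaller1992}: polar decomposition of $D_\phi$ to build $S$, the anticommutation $\{\beta,\mathrm{sgn}\,Q_\phi\}=0$ to make $U_{FW}$ unitary and diagonalize $Q_\phi$ into $\beta|Q_\phi|$, then spectral transfer from $Q_\phi^2$ together with $\sigma_{ess}(H_\phi)=(-\infty,-m]\cup[m,+\infty)$ and Courant--Fischer to obtain the eigenvalue formula, with the $k\mapsto k-d/2$ shift matched to $\dim\ker(D_\phi^*D_\phi)=d/2$ from Lemma \ref{0eigen}. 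You spell out more of the bookkeeping (unitarity of $U_{FW}$, the spectral-mapping step, the role of the kernel), but there is no genuine difference in approach.
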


The proof can be found in Thaller \cite[Theorem 5.5, Corollary 5.6]{Thaller1992} and we give here a sketch of proof for the reader's convenience.

\begin{proof-sketch}\smartqed
On $\ker(Q_\phi)^\perp$, we easily get that $U_{FW}^{-1} = U_{FW}^* = \frac{1}{\sqrt{2}}(1-\beta(sgn~ Q_\phi))$ and 
\[
U_{FW}Q_\phi U_{FW}^* = \beta(sgn~ Q_\phi) Q_\phi = \beta|Q_\phi|,
\] 
since $Q_\phi$ and $sgn~ Q_\phi$ commute. $sgn~ Q_\phi$ commutes with $Q_\phi^2$ too so, we get that ${D_{\phi}}^*D_{\phi} = S{D_{\phi}}^*D_{\phi}S.$ 
\qed\end{proof-sketch}

\begin{remark}
All the results of Theorem \ref{supersymmetry_foldy} are also true in the symmetric case if we replace the spaces 
\[L^2(\mathbb{R}^3,\mathbb{C}^2), ~H^1(\mathbb{R}^3,\mathbb{C}^2),\dots\]
involved with those denoted by 
\[L^2_{sym}(\mathbb{R}^3,\mathbb{C}^2),H^1_{sym}(\mathbb{R}^3,\mathbb{C}^2),\dots\]
composed of the functions of the form:
\[
x\mapsto\left(\begin{array}{c}
v(r)+u(r)cos\theta\\
u(r)sin\theta e^{i\phi}
\end{array}\right),
\]
where $(r,\theta,\phi)$ are the spherical coordinates of $x$.
\end{remark}

We give now conditions on the parameters of the soliton bag and the bag approximation models that ensures that the operator $H_\phi$ associated with any minimizer $\phi$ of \eqref{variationalformulation} or \eqref{variationalformulationcavity} has enough eigenvalues in $[0,m)$ counted with multiplicity. The following lemmas are true both in the symmetric and in the general case. 

\begin{lemma}\label{supersymmetry_estimationsoliton}
Let $k\in\mathbb{N}\backslash\{0\}$ and $m>0$ be fixed. There exists $g_0>0$ such that for $g>g_0$ we have
\begin{equation*}
0<ls_k := {\inf}\left\{N\lambda^{k}_+(H_{\phi})+\int_{\mathbb{R}^3}[\frac{|\nabla\phi|^2}{2}+U(\phi)]dx: \phi\in F\right\}<Nm.
\end{equation*}
$g_0$ depends on $N,k,m$ and $U$.
\end{lemma}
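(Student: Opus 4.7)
The plan is to prove the two inequalities separately. The lower bound uses coercivity of the field energy combined with the continuity of $\lambda^k_+$ at $\phi=0$, while the upper bound is obtained by constructing a carefully scaled test function $\phi_R$ whose support has vanishing effective Dirac mass $m+g\phi_R$.

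For the lower bound $ls_k>0$, hypothesis \eqref{condition:U2} gives
\[
\int_{\mathbb{R}^3}\Bigl(\tfrac{|\nabla\phi|^2}{2}+U(\phi)\Bigr)\,dx\geq \min(1/2,c)\,\|\phi\|_{H^1}^2,
\]
so this term can be small only if $\phi$ is small in $H^1$. By Theorem~\ref{supersymmetry_foldy}, $D_0^*D_0=-\Delta+m^2$ has essential spectrum $[m^2,\infty)$ with no eigenvalues below, so $\lambda^k_+(H_0)=m$. The continuity of $\phi\mapsto\lambda^k_+(H_\phi)$ provided by the auxiliary results of Section~3 then yields $\delta>0$ such that $\|\phi\|_{H^1}\leq\delta$ implies $\lambda^k_+(H_\phi)\geq m/2$. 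A dichotomy concludes: if $\|\phi\|_{H^1}\leq\delta$ then $N\lambda^k_+(H_\phi)\geq Nm/2$, otherwise the field energy is at least $\min(1/2,c)\delta^2$; in either case the objective is bounded below by a strictly positive constant.

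For the upper bound $ls_k<Nm$, set $\mu:=m/g$ so that $m+g\phi=0$ wherever $\phi=-\mu$. For $R>0$ choose a radial $\phi_R\in C_c^\infty(\mathbb{R}^3,\mathbb{R})$ with $\phi_R\equiv-\mu$ on $B(0,R)$, $\phi_R\equiv 0$ outside $B(0,2R)$, $|\phi_R|\leq\mu$, and $|\nabla\phi_R|\leq C\mu/R$. Pick $k$ scalar functions $f_1,\dots,f_k\in C_c^\infty(B(0,R-1))$ with pairwise disjoint supports, orthonormal in $L^2$ and with $\|\nabla f_j\|_{L^2}^2\leq C_k/R^2$ (e.g.\ translates of a rescaled bump). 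Set $\omega_j:=f_j\cdot e$, where $e\in\mathbb{C}^2$ is a fixed vector chosen compatibly with the symmetry class so that $\omega_j$ lies in the right ambient space (in the symmetric case, $e$ is chosen so the $\omega_j$ are of the prescribed symmetric form). On the support of $\omega_j$ the operator $D_{\phi_R}$ reduces to the massless $-i\boldsymbol{\sigma}\cdot\nabla$, hence $\|D_{\phi_R}\omega_j\|_{L^2}=\|\nabla f_j\|_{L^2}$. The min-max formula from Theorem~\ref{supersymmetry_foldy} applied to $V:=\mathrm{span}(\omega_1,\dots,\omega_k)$ gives
\[
\lambda^k_+(H_{\phi_R})^2\leq \sup_{\omega\in V,\,\|\omega\|_{L^2}=1}\|D_{\phi_R}\omega\|_{L^2}^2\leq k\max_j\|\nabla f_j\|_{L^2}^2\leq kC_k/R^2 .
\]
Integrating hypothesis \eqref{condition:U0} gives $U(x)\leq C'(x^2+|x|^{p+1})$, so for $g$ large enough that $\mu=m/g\leq 1$,
\[
\int_{\mathbb{R}^3}\Bigl(\tfrac{|\nabla\phi_R|^2}{2}+U(\phi_R)\Bigr)\,dx\leq C\mu^2 R^3 .
\]
Choosing $R=g^{1/3}$ makes both contributions $O(g^{-1/3})+O(m^2/g)$, so for $g$ larger than some $g_0=g_0(N,k,m,U)$ the objective evaluated at $\phi_R$ is strictly less than $Nm$, proving $ls_k<Nm$.

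The main obstacle is the upper bound: the two competing effects, namely the $k$-th eigenvalue of $H_\phi$ (which prefers $\phi$ to be deep and wide) and the field energy (which prefers $\phi$ shallow and localized), must vanish simultaneously. Matching $\mu\sim 1/g$ with $R\sim g^{1/3}$ is the key balance, and the fact that $\phi_R$ is radial allows the same construction to work in both the symmetric setting of $F=H^1_{rad}$ and the general setting of $F=H^1$.
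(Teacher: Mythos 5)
Your proof is correct and takes essentially the same route as the paper for the upper bound, while also supplying an explicit argument for the lower bound which the paper's proof leaves implicit. For $ls_k<Nm$ both you and the paper choose a scalar field equal to $-m/g$ on a ball, so the effective mass $m+g\phi$ vanishes there, and place low-Dirichlet-energy test spinors in that region. The differences are cosmetic: the paper uses the span of the first $k$ Dirichlet eigenfunctions on $B(0,R)$ to estimate $\lambda^k_+$, fixes a radius $R_0$ with $N\sqrt{C^k_1}/R_0<Nm$, and sends $g\to\infty$ to kill the field-energy terms, whereas you use $k$ disjointly supported bumps and scale $R\sim g^{1/3}$ so that both terms tend to zero. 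Two small points to tidy up: your parenthetical ``translates of a rescaled bump'' does not produce radial $f_j$, so in the symmetric case the disjointly supported functions must instead be taken in disjoint annuli (this still gives $\|\nabla f_j\|^2_{L^2}\lesssim C_k/R^2$); and the factor $k$ in $\sup_{\omega\in V}\|D_{\phi_R}\omega\|^2\le k\max_j\|\nabla f_j\|^2$ is unnecessary, since disjoint supports give $\|D_{\phi_R}\omega\|^2=\sum_j|c_j|^2\|\nabla f_j\|^2\le\max_j\|\nabla f_j\|^2$ --- both harmless. Your dichotomy for $0<ls_k$ --- coercivity of the field energy from \eqref{condition:U2} combined with $\lambda^k_+(H_0)=m$ and the continuity near $\phi=0$ from Proposition~\ref{auxiliary_convergence} --- is sound; note the proposition applies because you may assume $\lambda^k_+(H_{\phi_n})<m/2$ along the contradiction sequence, which is exactly the hypothesis $\sup_n\lambda^k(D_{\phi_n}^*D_{\phi_n})<m^2$ it requires.
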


\begin{proof}\smartqed

For $0<R<R'$, $0<\epsilon$, let $\phi_{R,R'}\in\mathcal{C}^{\infty}_0(\mathbb{R}^3,[-m/g,0])$ be a radial function such that 
\[\|\nabla\phi_{R,R'}\|_{L^{\infty}}\leq\frac{m+\epsilon}{g(R'-R)}\]
 and
\begin{equation*}
\phi_{R,R'}(x) = \left\{\begin{array}{ccl}-m/g& for &x\in B(0,R)\\ 0& for &x\in \mathbb{R}^3\backslash B(0,R').\end{array}\right.
\end{equation*}
Let $\omega_R\in H^1_{sym}(\mathbb{R}^3,\mathbb{C}^2)$ be such that $supp(\omega_R)\subset B(0,R)$ and $\|\omega_R\|_{L^2}=1,$ where $B(0,R)$ is the ball centered at $0$ in $\mathbb{R}^3$ of radius $R$.
Then, we have:
\[
\|D_{\phi_{R,R'}}\omega_R\|_{L^2}^2=\|\nabla \omega_R\|_{L^2}^2.
\]
Now, choosing for $\omega_R$, a normalized eigenfunction for the $k^{th}$-eigenvalue $C^k_R>0$ of the Dirichlet laplacian on $B(0,R),$ we get:
\[
\|D_{\phi_{R,R'}}\omega_R\|_{L^2}^2=C^k_R=\frac{C^k_1}{R^2}.
\]
Thus, the energy satisfies:
\begin{align*}
ls_k 	&\leq N\sqrt{\lambda^{k}({D_{\phi_{R,R'}}}^*D_{\phi_{R,R'}})}+\int_{\mathbb{R}^3}[\frac{|\nabla\phi_{R,R'}|^2}{2}+U(\phi_{R,R'})]dx\\
	&\leq\frac{N\sqrt{C^k_1}}{R}+ \frac{4\pi(m+\epsilon)^2}{6g^2}\frac{{R'}^2+RR'+R^2}{R'-R}+\frac{4\pi}{3}\left(\underset{r\in[-m/g,0]}{\sup} U(r)\right) R'^3.
\end{align*}

For now, we fix $R' =(1+\sqrt{3})R$, the point which minimizes $R'\mapsto\frac{{R'}^2+RR'+R^2}{R'-R}.$ If there exists $R>0$ such that :
\begin{eqnarray*}
	f(R) &&= \frac{N\sqrt{C^k_1}}{R} + \frac{4m^2(3+2\sqrt{3})\pi}{6g^2}R+\frac{4(1+\sqrt{3})^3\pi}{3}\left(\underset{r\in[-m/g,0]}{\sup} U(r)\right) R^3\\
	&&<Nm,
\end{eqnarray*}
we immediately get the result. Let us fix $R_0>0$ such that
\[
\frac{N\sqrt{C^k_1}}{R_0}<Nm.
\]
By hypothesis \eqref{condition:U0}, $U$ is continuous, $U(0)=0$ and we have
\[
	\underset{g\rightarrow +\infty}{\lim}~\underset{r\in[-m/g,0]}{\sup} U(r)=0.
\]
so that
\[
	\underset{g\rightarrow +\infty}{\lim}~\frac{4m^2(3+2\sqrt{3})\pi}{6g^2}R_0+\frac{4(1+\sqrt{3})^3\pi}{3}\left(\underset{r\in[-m/g,0]}{\sup} U(r)\right) R_0^3=0.
\]
Thus, there is $g_0>0$ such that if $g>g_0$ then
\[
\underset{R>0}{\inf}f(R)<Nm.
\]
\qed\end{proof}

A similar result holds for the bag approximation case.
\begin{lemma}\label{supersymmetry_estimationcavity}
Let $k\in\mathbb{N}\backslash\{0\}.$ Assume that $g\in(0,m).$ There is a constant $\delta>0$ such that if $a,~b<\delta$ then, 
\begin{equation*}
0<lc_k =\inf\{N\lambda^{k}_+(H_{-\chi_{\Omega}})+aP(\Omega)+b|\Omega|:\chi_{\Omega}\in F\}<Nm.
\end{equation*}
\end{lemma}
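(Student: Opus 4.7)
The plan is to mirror the proof of Lemma~\ref{supersymmetry_estimationsoliton}, replacing the smooth cutoff $\phi_{R,R'}$ of the soliton case by the characteristic function $\chi_{B(0,R)}$, so that the penalty $\int[|\nabla\phi|^2/2+U(\phi)]dx$ is replaced by $aP(B(0,R))+b|B(0,R)|=4\pi a R^2+(4\pi/3)bR^3$. By Theorem~\ref{supersymmetry_foldy}, bounding $\lambda^k_+(H_{-\chi_{B(0,R)}})$ reduces to a min--max for $\|D_{-\chi_{B(0,R)}}\omega\|_{L^2}$ over $k$-dimensional subspaces of $H^1$, which is much easier to estimate explicitly.

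For the trial function I would take $\omega_R$ supported in $B(0,R)$ (in $H^1_{sym}$ in the symmetric case, in $H^1$ otherwise), normalized in $L^2$, and equal to a $k^{th}$-eigenfunction of the Dirichlet Laplacian on $B(0,R)$, so that $\|\nabla\omega_R\|_{L^2}^2=C^k_1/R^2$. The decisive observation is that on $\mathrm{supp}(\omega_R)$ the coefficient $m+g\phi$ is the \emph{constant} $m-g$. Expanding $\|D_{-\chi_{B(0,R)}}\omega_R\|_{L^2}^2$ then produces $\|\nabla\omega_R\|_{L^2}^2+(m-g)^2\|\omega_R\|_{L^2}^2$ plus a cross term; integrating by parts and invoking the Hermiticity of the $\boldsymbol{\sigma}_k$ (the same trick used in the proof of Lemma~\ref{supersymmetry_kernel}) shows the cross integral is purely imaginary, so its real part vanishes. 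Consequently $\lambda^k_+(H_{-\chi_{B(0,R)}})\leq \sqrt{C^k_1/R^2+(m-g)^2}$, and
\[
	lc_k \leq N\sqrt{\frac{C^k_1}{R^2}+(m-g)^2}+4\pi a R^2 + \frac{4\pi}{3}b R^3.
\]

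Because $g>0$, the first term of the right-hand side tends to $N(m-g)<Nm$ as $R\to+\infty$, so I would first pick $R_0$ so large that $\eta:=Nm-N\sqrt{C^k_1/R_0^2+(m-g)^2}>0$, and then choose $\delta$ with $\delta(4\pi R_0^2+(4\pi/3)R_0^3)<\eta$; this yields $lc_k<Nm$ whenever $a,b<\delta$. For the lower bound $lc_k>0$, every term in the functional is nonnegative; a minimizing sequence with energy tending to $0$ would force $|\Omega_n|\to 0$ (using $b>0$), so by the continuity of the eigenvalues to be established in Section~3 one would have $\lambda^k_+(H_{-\chi_{\Omega_n}})\to \lambda^k_+(H_0)=m>0$, contradicting the vanishing of the energy. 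The most delicate point of the argument is the vanishing of the cross term, which is what dictates the choice of trial function: one has to localize $\omega_R$ where $m+g\phi$ is constant, and this is precisely why the ball is the natural test set in the bag approximation setting.
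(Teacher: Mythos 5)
Your proposal is correct and follows essentially the same route as the paper: take $\chi_{B(0,R)}$ as the trial cavity, take the $k$-th Dirichlet eigenfunction of the Laplacian on $B(0,R)$ as the trial spinor, observe (via the purely imaginary cross term) that $\|D_{-\chi_{B(0,R)}}\omega_R\|_{L^2}^2 = C^k_1/R^2 + (m-g)^2$, and obtain $lc_k\leq N\sqrt{C^k_1/R^2+(m-g)^2}+4\pi aR^2+\tfrac{4\pi}{3}bR^3$, from which the strict inequality follows by choosing $R$ large and then $a,b$ small. Your explicit justification of the vanishing cross term and of $lc_k>0$ (via Proposition~\ref{auxiliary_convergence}) fills in steps the paper leaves implicit, but the underlying argument is the same.
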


\begin{proof}\smartqed
For $0<R$, we choose $\chi_\Omega=\chi_{B(0,R)}$ and $\omega_k$ a normalized eigenfunction for the $k^{th}$-eigenvalue $C^k_R>0$ of the Dirichlet laplacian on $B(0,R).$ We get:
\[
0<lc_k\leq N\sqrt{\frac{C^k_1}{R^2} +(m-g)^2}+a4\pi R^2+b4/3\pi R^3,
\] 
and the result follows.
\qed\end{proof}

\subsection{The M.I.T. bag Dirac operator}

Just as in the previous case, the supersymmetry gives us a good frame to study the problem of the eigenvalues of the M.I.T. bag Dirac operator. We set $\Omega=B(0,R)$ with $R>0$,
	\[
		\begin{array}{lr}
			D = -i\boldsymbol{\sigma}.\nabla+im,		&\mathcal{D}(D) = \{\omega\in H^1_{sym}(\Omega,\mathbb{C}^2)~:~-\boldsymbol{\sigma}.n\omega=\omega~\text{on}~\partial\Omega\},\\
			D^* = -i\boldsymbol{\sigma}.\nabla-im,		&\mathcal{D}(D^*) = \{\omega\in H^1_{sym}(\Omega,\mathbb{C}^2)~:~\boldsymbol{\sigma}.n\omega=\omega~\text{on}~\partial\Omega\},
		\end{array}
	\]
	\[
		Q = TH_{0}T^{-1}=\left(\begin{array}{cc}
			0&D^*\\
			D&0
		\end{array}\right)
	\]
	and $\mathcal{D}(Q) = \mathcal{D}(D)\oplus\mathcal{D}(D^*)=T\mathcal{D}(H_0).$
	The following result implies Proposition \ref{prop:supersymmetryMIT1}.
\begin{proposition}\label{prop:supersymmetryMIT}
	The operator $(H_0,\mathcal{D}(H_0))$ is self-adjoint and there exists a non decreasing sequence of eigenvalues $(\lambda_n)_{n\geq1}\subset (m,+\infty)$ which tends to infinity such that:
	\[
		\sigma(H_0) = \{\dots,-\lambda_2,-\lambda_1\}\cup\{\lambda_1,\lambda_2,\dots\},
	\]
	 $D^*$ is the adjoint of $D$ and vice versa. We have for each $n$:
	\[
		\lambda^n_{MIT}(\Omega) = \lambda_n = \inf_{\substack{V\subset \mathcal{D}(D),\\\dim V=k}}\; \sup_{\substack{\omega\in V,\\\|\omega\|_{L^2}=1}}\|D\omega\|_{L^2(\Omega)}.
	\]
\end{proposition}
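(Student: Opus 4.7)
The plan is to reduce everything to the supersymmetric framework already developed for $Q$ in Theorem~\ref{supersymmetry_foldy}, adapted to a bounded domain with MIT boundary conditions. Three key facts are needed: (i) $D^*$ is the Hilbert-space adjoint of $D$, so that $Q$ (and hence $H_0 = T^{-1}QT$) is self-adjoint; (ii) $D^*D$ has compact resolvent and discrete spectrum; and (iii) the smallest positive eigenvalue $\lambda_1$ of $H_0$ is strictly larger than $m$.

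For (i), integration by parts on smooth $\omega_1\in\mathcal{D}(D)$ and $\omega_2\in\mathcal{D}(D^*)$ produces the boundary identity
\[
(D\omega_1,\omega_2)_{L^2(\Omega)} - (\omega_1, D^*\omega_2)_{L^2(\Omega)} = i\int_{\partial\Omega}(\omega_1, \boldsymbol{\sigma}\cdot n\,\omega_2)_{\mathbb{C}^2}\,d\mathcal{H}^2.
\]
Since $\boldsymbol{\sigma}\cdot n$ is a Hermitian unitary involution on $L^2(\partial\Omega,\mathbb{C}^2)$, the MIT conditions $\boldsymbol{\sigma}\cdot n\,\omega_1 = -\omega_1$ and $\boldsymbol{\sigma}\cdot n\,\omega_2 = \omega_2$ place $\omega_1$ and $\omega_2$ in its orthogonal $\mp 1$ eigenspaces, so the boundary integral vanishes. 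This yields $D\subset (D^*)^*$; the reverse (maximality) inclusion follows from elliptic regularity for the MIT boundary value problem, which is known to satisfy the Lopatinski--Shapiro condition. Consequently the block-off-diagonal operator $Q$ is self-adjoint on $\mathcal{D}(D)\oplus\mathcal{D}(D^*)$, and since $T$ is unitary and preserves the relevant symmetric subspaces, $H_0 = T^{-1}QT$ is self-adjoint on $\mathcal{D}(H_0)$.

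For (ii), the Foldy--Wouthuysen argument of Theorem~\ref{supersymmetry_foldy} adapts verbatim: $Q^2 = \mathrm{diag}(D^*D,\, DD^*)$, and the equivalence $\sigma(H_0)\setminus\{0\}=\{\pm\sqrt{\mu}:\mu\in\sigma(D^*D)\setminus\{0\}\}$ together with the min-max
\[
	\sqrt{\lambda^k(D^*D)} = \inf_{\substack{V\subset\mathcal{D}(D)\\\dim V = k}}\;\sup_{\substack{\omega\in V\\\|\omega\|_{L^2}=1}} \|D\omega\|_{L^2(\Omega)}
\]
follows. A direct computation using $(\boldsymbol{\sigma}\cdot\nabla)^2 = \Delta$ shows that $D^*D$ acts as $-\Delta + m^2$ on its natural domain, and elliptic regularity for the MIT boundary system embeds this domain into $H^2_{sym}(\Omega,\mathbb{C}^2)$. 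Since $\Omega = B(0,R)$ is bounded, this embeds compactly into $L^2$, giving a discrete, nondecreasing sequence of positive eigenvalues of $D^*D$ tending to $+\infty$. Together with (iii), this yields the symmetric spectral structure $\sigma(H_0) = \{\dots,-\lambda_2,-\lambda_1\}\cup\{\lambda_1,\lambda_2,\dots\}$ with $\lambda_n\to+\infty$.

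For (iii), I would expand pointwise $|D\omega|^2 = |\boldsymbol{\sigma}\cdot\nabla\omega|^2 + m^2|\omega|^2 - 2m\,\mathrm{Re}(\omega,\boldsymbol{\sigma}\cdot\nabla\omega)_{\mathbb{C}^2}$, observe that the cross term is the divergence $\nabla\cdot(\omega,\boldsymbol{\sigma}\omega)_{\mathbb{C}^2}$, and integrate to get
\[
	\|D\omega\|_{L^2(\Omega)}^2 = \|\boldsymbol{\sigma}\cdot\nabla\omega\|_{L^2(\Omega)}^2 + m^2\|\omega\|_{L^2(\Omega)}^2 + m\int_{\partial\Omega}|\omega|^2\,d\mathcal{H}^2,
\]
where the MIT condition $\boldsymbol{\sigma}\cdot n\,\omega = -\omega$ has been used to evaluate the boundary integral. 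Consequently $\|D\omega\|^2 > m^2\|\omega\|^2$ whenever $\omega\not\equiv 0$: equality would force $\omega\vert_{\partial\Omega}\equiv 0$ and $\boldsymbol{\sigma}\cdot\nabla\omega\equiv 0$, whence $\Delta\omega = 0$ with zero Dirichlet trace, i.e.\ $\omega\equiv 0$. Hence $\lambda_1 > m$. The main obstacle is precisely this sign bookkeeping in the final identity: getting the boundary term with a positive sign, rather than a negative or indefinite one, relies on Hermiticity of $\boldsymbol{\sigma}\cdot n$ combined with the specific sign convention in the MIT boundary condition---without it, one would obtain only $\lambda_1 \ge m$, and the claim $(\lambda_n)\subset(m,+\infty)$ would fail.
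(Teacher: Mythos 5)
Your proposal correctly fills in the details of the paper's sketch and follows the same route: establishing that $D$ and $D^*$ are mutual adjoints via the boundary integral cancellation, applying the Foldy--Wouthuysen/supersymmetry mechanism of Theorem~\ref{supersymmetry_foldy} to reduce to $D^*D=-\Delta+m^2$ with compact resolvent on the bounded ball, and using the identity $\|D\omega\|_{L^2(\Omega)}^2=\|\boldsymbol{\sigma}\cdot\nabla\omega\|_{L^2(\Omega)}^2+m^2\|\omega\|_{L^2(\Omega)}^2+m\int_{\partial\Omega}|\omega|^2\,d\mathcal{H}^2$ (which the paper itself invokes in the proof of Proposition~\ref{prop:MITgroundstate}) to obtain $\lambda_1>m$ strictly. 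The strictness argument is complete because the compact-resolvent step ensures the infimum defining $\lambda_1$ is attained by an eigenfunction.
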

\begin{proof-sketch}\smartqed
The proof uses the spectral theory of self-adjoint compact operators and the ideas of the proof of Theorem \ref{supersymmetry_foldy}.
\qed\end{proof-sketch}

\section{Auxiliary results}
We study in this section, the dependance of the non-negative eigenvalue of $H_\phi$ on the field $\phi.$ This is an important point in this paper that will allow us to prove lower semi-continuity properties for the functionals involved in problems \eqref{variationalformulation} and \eqref{variationalformulationcavity}.   To prove Proposition \ref{auxiliary_convergence} below, we will need the two following lemmas.

\begin{lemma}\label{auxiliary_scalarfieldlimit}
Assume that  $(\phi_n)$ converges to $\phi_\infty$ strongly in $L^3(\mathbb{R}^3,\mathbb{R})$. Then, we have:
\[
\|D_{\phi_n}\omega\|^2_{L^2} \text{~converges to~} \|D_{\phi_\infty}\omega\|^2_{L^2}
\] 
locally uniformly in $\omega\in H^1(\mathbb{R}^3,\mathbb{C}^2)$ i.e., for every $R>0$:
\[
\sup\{|\|D_{\phi_n}\omega\|^2_{L^2}-\|D_{\phi_\infty}\omega\|^2_{L^2}|: \omega\in H^1(\mathbb{R}^3,\mathbb{C}^2), \|\omega\|_{H^1}\leq R\}\underset{n\rightarrow\infty}{\rightarrow}0.
\]
\end{lemma}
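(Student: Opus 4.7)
The plan is to reduce the claim to an elementary $L^2$ identity combined with Hölder's inequality and the Sobolev embedding $H^1(\mathbb{R}^3)\hookrightarrow L^6(\mathbb{R}^3)$. First, since the operator difference $D_{\phi_n}-D_{\phi_\infty}=ig(\phi_n-\phi_\infty)$ is pointwise multiplication, one has $D_{\phi_n}\omega=D_{\phi_\infty}\omega+ig(\phi_n-\phi_\infty)\omega$ for every $\omega\in H^1(\mathbb{R}^3,\mathbb{C}^2)$. Expanding the squared $L^2$-norm via $\|a+b\|^2=\|a\|^2+\|b\|^2+2\text{Re}\langle a,b\rangle$ yields
\[
\|D_{\phi_n}\omega\|_{L^2}^2-\|D_{\phi_\infty}\omega\|_{L^2}^2=g^2\|(\phi_n-\phi_\infty)\omega\|_{L^2}^2+2g\,\text{Re}\langle D_{\phi_\infty}\omega,\,i(\phi_n-\phi_\infty)\omega\rangle_{L^2},
\]
which is the only identity really needed.

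Next, I would estimate each term on the right using the Hölder triple $\tfrac{1}{3}+\tfrac{1}{6}+\tfrac{1}{2}=1$. The first satisfies $\|(\phi_n-\phi_\infty)\omega\|_{L^2}\leq\|\phi_n-\phi_\infty\|_{L^3}\|\omega\|_{L^6}$, and Sobolev gives $\|\omega\|_{L^6}\leq C\|\omega\|_{H^1}$. For the second, the same Hölder/Sobolev argument applied to the mass-like term gives $\|D_{\phi_\infty}\omega\|_{L^2}\leq\|\nabla\omega\|_{L^2}+m\|\omega\|_{L^2}+g\|\phi_\infty\|_{L^3}\|\omega\|_{L^6}\leq C(m,\|\phi_\infty\|_{L^3})\|\omega\|_{H^1}$, so Cauchy--Schwarz bounds the cross term by $C\|\phi_n-\phi_\infty\|_{L^3}\|\omega\|_{H^1}^2$.

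Combining these bounds yields
\[
\bigl|\|D_{\phi_n}\omega\|_{L^2}^2-\|D_{\phi_\infty}\omega\|_{L^2}^2\bigr|\leq C(R,m,\|\phi_\infty\|_{L^3})\bigl(\|\phi_n-\phi_\infty\|_{L^3}+\|\phi_n-\phi_\infty\|_{L^3}^2\bigr)
\]
for every $\omega$ with $\|\omega\|_{H^1}\leq R$, which is exactly the desired local uniform convergence. There is really no conceptual obstacle here; the only point worth noting is that the $L^3$ hypothesis on $\phi$ is precisely the critical exponent in dimension three that makes the integrand $\phi\cdot\omega\cdot\omega$ lie in $L^1$ through the Sobolev embedding, so the norm in which the convergence of $\phi_n$ is assumed matches exactly the pairings that arise in the estimate.
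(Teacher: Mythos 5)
Your proof is correct and rests on the same ingredients as the paper's: the observation that $\phi$ enters $D_\phi$ linearly as a multiplication operator, the Hölder pairing $L^3\cdot L^6\cdot L^2$, and the Sobolev embedding $H^1\hookrightarrow L^6$. The only difference is organizational: the paper expands $\|D_\phi\omega\|_{L^2}^2$ into its three constituent terms and subtracts term by term, whereas you write $D_{\phi_n}\omega=D_{\phi_\infty}\omega+ig(\phi_n-\phi_\infty)\omega$ and use $\|a+b\|^2=\|a\|^2+\|b\|^2+2\mathrm{Re}\langle a,b\rangle$, which is a cleaner way to bookkeep the same estimate.
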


\begin{proof}\smartqed
We have
\begin{equation}\label{eq:identity1}
\|D_{\phi}\omega\|^2_{L^2}=\|\nabla \omega\|^2_{L^2}+\int_{\mathbb{R}^3}(m+g\phi)^2|\omega|^2dx-2g\mathcal{R}e\int_{\mathbb{R}^3}[\omega^*(\boldsymbol{\sigma}.\nabla\omega)\phi]dx.
\end{equation}
By H\"older's inequality, we get:
\begin{eqnarray*}
	\lefteqn{\left|\int_{\mathbb{R}^3}[\omega^*(\boldsymbol{\sigma}.\nabla\omega)\phi_n]-[\omega^*(\boldsymbol{\sigma}.\nabla\omega)\phi_{\infty}]dx\right|}\\
	&\leq \|\omega^*\boldsymbol{\sigma}.\nabla \omega\|_{L^{3/2}}\|\phi_n-{\phi_\infty}\|_{L^3}&\leq \|\omega\|_{L^6}\|\nabla \omega\|_{L^2}\|\phi_n-{\phi_\infty}\|_{L^3},
\end{eqnarray*}

\[
	\left|\int_{\mathbb{R}^3}|\omega|^2(\phi_n-{\phi_\infty})dx\right|\leq \|\omega\|_{L^3}^2\|\phi_n-{\phi_\infty}\|_{L^3},
\]
and
\[
	\left|\int_{\mathbb{R}^3}|\omega|^2(\phi_n^2-{\phi_\infty}^2)dx\right|\leq\|\omega\|_{L^6}^2\|\phi_n-{\phi_\infty}\|_{L^3}(\|\phi_n\|_{L^3}+\|\phi_\infty\|_{L^3}).
\]
The result follows.
\qed\end{proof}

\begin{lemma}\label{auxiliary_uniformcoecive}
Let $\epsilon>0.$ The functional $\omega\mapsto \|D_{\phi}\omega\|^2_{L^2}$ is coercive on 
\[\{\omega \in H^1(\mathbb{R}^3,\mathbb{C}^2): \|\omega\|_{L^2}=1\}\]
locally uniformly in $\phi\in L^3\cap L^{3+\epsilon}$ i.e., for every $R>0$, there is a $C>0$ such that
\[
\|D_{\phi}\omega\|^2_{L^2}\geq \|\nabla \omega\|_{L^2}^2-C
\]
for $\omega\in \{\omega \in H^1(\mathbb{R}^3,\mathbb{C}^2): \|\omega\|_{L^2}=1\}$ and $\phi$ such that $\|\phi\|_{L^3}+\|\phi\|_{L^{3+\epsilon}}\leq R.$
\end{lemma}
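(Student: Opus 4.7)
Plan: I would start from the pointwise expansion derived in the proof of Lemma~\ref{auxiliary_scalarfieldlimit},
\[
\|D_\phi\omega\|_{L^2}^2 = \|\nabla\omega\|_{L^2}^2 + \int_{\mathbb{R}^3}(m+g\phi)^2|\omega|^2\,dx - 2g\,\mathcal{R}e\int_{\mathbb{R}^3}\phi\,\omega^*(\boldsymbol{\sigma}\cdot\nabla\omega)\,dx.
\]
The middle term is non-negative and can be discarded, so the task reduces to bounding the cross term, uniformly in $\omega$ on the $L^2$-unit sphere and in $\phi$ with $\|\phi\|_{L^3}+\|\phi\|_{L^{3+\epsilon}}\le R$, by a constant plus an arbitrarily small fraction of $\|\nabla\omega\|_{L^2}^2$.

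The step I expect to be the main obstacle is precisely this cross-term estimate. The naive H\"older $(6,2,3)$ bound followed by the Sobolev embedding $H^1\hookrightarrow L^6$ only gives
\[
\Big|\int \phi\,\omega^*(\boldsymbol{\sigma}\cdot\nabla\omega)\,dx\Big|\le C_S\|\phi\|_{L^3}\|\nabla\omega\|_{L^2}^2,
\]
whose prefactor $2gC_S\|\phi\|_{L^3}$ need not be small, since $\|\phi\|_{L^3}$ is only controlled by $R$. The whole reason for assuming $L^{3+\epsilon}$-integrability in addition to $L^3$ is to allow one to peel off a small-$L^3$ tail of $\phi$.

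Concretely I would fix $T>0$ (to be chosen at the end) and split $\phi=\phi_T+\phi^T$, with $\phi^T:=\phi\chi_{\{|\phi|>T\}}$ and $\phi_T:=\phi-\phi^T$. Chebyshev's inequality yields
\[
\|\phi^T\|_{L^3}^3 \le T^{-\epsilon}\|\phi\|_{L^{3+\epsilon}}^{3+\epsilon}\le T^{-\epsilon}R^{3+\epsilon},
\]
so $\|\phi^T\|_{L^3}$ becomes as small as one wishes for $T$ large, depending only on $R$ and $\epsilon$. For the $\phi^T$-piece I would apply H\"older $(6,2,3)$ together with Sobolev to obtain $2g|\mathcal{R}e\int \phi^T\omega^*(\boldsymbol{\sigma}\cdot\nabla\omega)|\le 2gC_S\|\phi^T\|_{L^3}\|\nabla\omega\|_{L^2}^2$ with prefactor as small as needed. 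For the $\phi_T$-piece, $\|\phi_T\|_{L^\infty}\le T$ combined with $\|\omega\|_{L^2}=1$ and Cauchy--Schwarz gives $2g|\mathcal{R}e\int \phi_T\omega^*(\boldsymbol{\sigma}\cdot\nabla\omega)|\le 2gT\|\nabla\omega\|_{L^2}$, which Young's inequality turns into $\eta\|\nabla\omega\|_{L^2}^2+g^2T^2/\eta$ for any $\eta>0$.

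Adding the two pieces, choosing $T$ so that $2gC_S\|\phi^T\|_{L^3}+\eta$ is smaller than any prescribed threshold, and then fixing $\eta$ produces a bound of the form $\|D_\phi\omega\|_{L^2}^2\ge \|\nabla\omega\|_{L^2}^2-C$ for some constant $C=C(R,\epsilon,g,m)$ independent of $\phi$ and $\omega$, which is the uniform coercivity claim. The key mechanism is that the $L^{3+\epsilon}$ assumption, useless on its own for the H\"older bound, becomes decisive once combined with a truncation: it produces a small-$L^3$ remainder (handled by Sobolev) plus a bounded part (handled by Young), exactly the decomposition needed to close the estimate without sacrificing the leading $\|\nabla\omega\|_{L^2}^2$.
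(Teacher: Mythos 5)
Your proof is correct in spirit and establishes what the lemma is actually used for, but it takes a genuinely different route from the paper. The paper's proof applies H\"older directly to the cross term with exponents $(p,2,3+\epsilon)$, where $p=\tfrac{6+2\epsilon}{1+\epsilon}\in(2,6)$, so that
\[
\left|\int\phi\,\omega^*(\boldsymbol{\sigma}\cdot\nabla\omega)\,dx\right|\leq\|\omega\|_{L^p}\|\nabla\omega\|_{L^2}\|\phi\|_{L^{3+\epsilon}},
\]
and then interpolates $\|\omega\|_{L^p}\leq\|\omega\|_{L^2}^{\theta}\|\omega\|_{L^6}^{1-\theta}$ with $\theta=\tfrac{\epsilon}{3+\epsilon}>0$; combined with $\|\omega\|_{L^2}=1$ and Sobolev, this yields a bound of order $\|\nabla\omega\|_{L^2}^{2-\theta}$, strictly subquadratic, which closes the estimate. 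Your decomposition $\phi=\phi_T+\phi^T$ plus Chebyshev achieves the same subcriticality by a different mechanism: the tail $\phi^T$ has a small $L^3$ norm so the H\"older $(6,2,3)$ bound gives a small multiple of $\|\nabla\omega\|_{L^2}^2$, while the bounded piece $\phi_T$ gives only a linear growth in $\|\nabla\omega\|_{L^2}$ handled by Young. Both routes are standard and deliver equivalent estimates; yours is arguably more transparent about \emph{why} the $L^{3+\epsilon}$ integrability is indispensable (a point you correctly flag at the outset), whereas the paper's is more compact.

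One caveat applies to both arguments: neither literally yields the inequality with coefficient exactly $1$ on $\|\nabla\omega\|_{L^2}^2$. Your estimate $2gC_S\|\phi^T\|_{L^3}+\eta$ can be made as small as one likes but not zero, and similarly the paper's subquadratic bound $\|\nabla\omega\|_{L^2}^{2-\theta}$ still costs a small fraction of $\|\nabla\omega\|_{L^2}^2$ by Young. What both actually prove is $\|D_\phi\omega\|_{L^2}^2\geq(1-\delta)\|\nabla\omega\|_{L^2}^2-C_\delta$ for each $\delta>0$, uniformly on the stated $\phi$-ball. This weaker statement is exactly what Proposition~\ref{auxiliary_convergence} needs (boundedness of the $(\omega_n^k)$ in $H^1$), so the discrepancy is cosmetic; but you should not claim the literal coefficient-$1$ bound follows from your truncation, since the small loss cannot be pushed to zero.
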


\begin{proof}\smartqed
By equality \eqref{eq:identity1} and H\"older's inequality, we have for $\omega\in \{\omega \in H^1(\mathbb{R}^3,\mathbb{C}^2): \|\omega\|_{L^2}=1\}$ that
\begin{eqnarray*}
	\lefteqn{\|D_{\phi}\omega\|^2_{L^2}\geq}\\
		& \|\nabla \omega\|_{L^2}^2+m^2&-2g\|\omega\|_{L^p}\|\nabla \omega\|_{L^2}\|\phi\|_{L^{3+\epsilon}}-2gm\|\omega\|_{L^2}\|\omega\|_{L^6}\|\phi\|_{L^3},
\end{eqnarray*}
where $p=\frac{6+2\epsilon}{1+\epsilon}\in(2,6).$ The Sobolev embedding $H^1\hookrightarrow L^6$ and the interpolation inequalities give us the result.
\qed\end{proof}

\begin{proposition}\label{auxiliary_convergence}
Let $\epsilon>0.$ Assume that a sequence $(\phi_n)$ converges to $\phi_\infty$ strongly in $L^3\cap L^{3+\epsilon}$. If for $k\geq 1,$ 
\begin{equation}\label{estimationeigenvalue}
\underset{n\in \mathbb{N}}{\sup}\lambda^k({D_{\phi_n}}^*D_{\phi_n})<m^2,
\end{equation}
then, up to a subsequence, there exist orthonormal families :
\[
(\omega^1_\infty,\dots, \omega^k_\infty) \text{~and~for all $n\in\mathbb{N},$~} (\omega^1_n,\dots, \omega^k_n)\text{~ in~} L^2(\mathbb{R}^3,\mathbb{C}^2)
\]
such that:
\begin{enumerate}
	\item \label{item0}$\omega^i_\infty,\omega^i_n\in H^1(\mathbb{R}^3,\mathbb{C}^2)$ for all $n$,
	\item\label{item1} $\|D_{\phi_n}\omega^i_n\|^2_{L^2}=\lambda^i({D_{\phi_n}}^*D_{\phi_n})$ for all $n$,
	\item\label{item2} $\|D_{\phi_\infty}\omega^i_\infty\|_{L^2}^2=\lambda^i({D_{\phi_\infty}}^*D_{\phi_\infty})$,
	\item\label{item3} $\underset{n\rightarrow\infty}{\lim}\lambda^i({D_{\phi_n}}^*D_{\phi_n})=\lambda^i({D_{\phi_\infty}}^*D_{\phi_\infty}),$
	\item\label{item4} $\underset{n\rightarrow\infty}{\lim}\omega^i_n=\omega^i_\infty$ in $H^1,$
\end{enumerate}
for all $1\leq i\leq k.$
\end{proposition}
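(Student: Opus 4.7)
The plan is to combine the min-max characterization
\[
\lambda^i(D_\phi^*D_\phi) = \inf_{\substack{V\subset H^1\\\dim V = i}} \sup_{\substack{\omega\in V\\\|\omega\|_{L^2}=1}} \|D_\phi \omega\|_{L^2}^2
\]
from Theorem \ref{supersymmetry_foldy} with the two preceding lemmas. Write $\lambda^i_n := \lambda^i(D_{\phi_n}^*D_{\phi_n})$ and $\lambda^i_\infty := \lambda^i(D_{\phi_\infty}^*D_{\phi_\infty})$. Hypothesis \eqref{estimationeigenvalue} places each $\lambda^i_n$ ($i\leq k$) strictly below the essential spectrum $[m^2,+\infty)$, so each is attained by an orthonormal eigenfamily $(\omega^1_n,\ldots,\omega^k_n)\subset H^1$, giving items \ref{item0} and \ref{item1} for the $\omega^i_n$. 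Lemma \ref{auxiliary_uniformcoecive} yields a uniform $H^1$-bound on this family, and I extract a subsequence along which $\omega^i_n \rightharpoonup \tilde\omega^i$ weakly in $H^1$ and strongly in $L^p_{\mathrm{loc}}$ for every $p\in[1,6)$, and along which each $\lambda^i_n$ converges.

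To establish item \ref{item3}, I use the min-max in both directions together with Lemma \ref{auxiliary_scalarfieldlimit}. Testing for $\lambda^i_\infty$ on $V_n := \mathrm{span}(\omega^1_n,\dots,\omega^i_n)$, which sits in an $H^1$-ball of fixed radius by $L^2$-orthonormality and the uniform $H^1$-bound, Lemma \ref{auxiliary_scalarfieldlimit} gives
\[
\lambda^i_\infty \leq \sup_{\omega\in V_n,\,\|\omega\|_{L^2}=1}\|D_{\phi_\infty}\omega\|_{L^2}^2 = \sup_{\omega\in V_n,\,\|\omega\|_{L^2}=1}\|D_{\phi_n}\omega\|_{L^2}^2 + o(1) = \lambda^i_n + o(1),
\]
hence $\lambda^i_\infty \leq \liminf_n \lambda^i_n$. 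For the reverse inequality, pick for each $\eta>0$ a fixed $i$-dimensional subspace $V\subset H^1$ realizing $\lambda^i_\infty$ up to $\eta$; Lemma \ref{auxiliary_scalarfieldlimit} gives $\limsup_n \lambda^i_n \leq \lambda^i_\infty + \eta$. Therefore $\lambda^i_n \to \lambda^i_\infty < m^2$, and each $\lambda^i_\infty$ is also attained by an $H^1$-eigenfunction.

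The crux, item \ref{item2} together with strong $L^2$-convergence, is a no-mass-loss argument carried out inductively on $i$. Decompose $\omega^i_n = \tilde\omega^i + \rho^i_n$ with $\rho^i_n \rightharpoonup 0$ in $H^1$. Using Lemma \ref{auxiliary_scalarfieldlimit} and expanding via \eqref{eq:identity1},
\[
\lambda^i_n = \|D_{\phi_\infty}\tilde\omega^i\|_{L^2}^2 + \|D_{\phi_\infty}\rho^i_n\|_{L^2}^2 + 2\,\mathrm{Re}\langle D_{\phi_\infty}\tilde\omega^i, D_{\phi_\infty}\rho^i_n\rangle + o(1).
\]
The cross term vanishes: its gradient part by weak $H^1$-convergence $\rho^i_n\rightharpoonup 0$, its potential part by splitting $\phi_\infty = \phi_\infty\chi_{B(0,R)} + \phi_\infty\chi_{B(0,R)^c}$ for large $R$ so that $\|\phi_\infty\chi_{B(0,R)^c}\|_{L^3}$ is arbitrarily small while $\rho^i_n\to 0$ strongly in $L^q(B(0,R))$ for $q\in[2,6)$. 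The same localization applied to \eqref{eq:identity1} for $\|D_{\phi_\infty}\rho^i_n\|_{L^2}^2$ yields the essential-spectrum bound
\[
\|D_{\phi_\infty}\rho^i_n\|_{L^2}^2 \geq m^2\|\rho^i_n\|_{L^2}^2 + o(1).
\]
Under the inductive hypothesis that $\omega^j_n\to\omega^j_\infty$ strongly in $L^2$ with unit norm for $j<i$, the orthogonality $\langle \omega^i_n,\omega^j_n\rangle = \delta_{ij}$ passes to $\langle \tilde\omega^i, \omega^j_\infty\rangle = 0$, so Rayleigh--Ritz gives $\|D_{\phi_\infty}\tilde\omega^i\|_{L^2}^2 \geq \lambda^i_\infty\|\tilde\omega^i\|_{L^2}^2$. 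Combining with $\lambda^i_n\to\lambda^i_\infty < m^2$ yields $(m^2-\lambda^i_\infty)\liminf_n\|\rho^i_n\|_{L^2}^2 \leq 0$, forcing $\rho^i_n\to 0$ in $L^2$ and $\|\tilde\omega^i\|_{L^2}=1$. Setting $\omega^i_\infty := \tilde\omega^i$, equality in Rayleigh--Ritz identifies it as an eigenfunction for $\lambda^i_\infty$, proving item \ref{item2}.

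For item \ref{item4}, I expand $\|D_{\phi_n}\omega^i_n\|_{L^2}^2$ via \eqref{eq:identity1}. Strong $L^p$-convergence of $\omega^i_n$ for $p\in[2,6)$, obtained by interpolation between strong $L^2$-convergence and the uniform $H^1$-bound together with the Sobolev embedding $H^1\hookrightarrow L^6$, combined with strong $L^3\cap L^{3+\epsilon}$-convergence of $\phi_n$, permits passage to the limit in the quadratic and cross potential terms. Since $\|D_{\phi_n}\omega^i_n\|_{L^2}^2 = \lambda^i_n\to\lambda^i_\infty = \|D_{\phi_\infty}\omega^i_\infty\|_{L^2}^2$, isolating $\|\nabla\omega^i_n\|_{L^2}^2$ yields $\|\nabla\omega^i_n\|_{L^2}\to\|\nabla\omega^i_\infty\|_{L^2}$, and together with weak $H^1$-convergence this gives strong $H^1$-convergence. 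The main obstacle is the no-mass-loss step of the third paragraph: one must simultaneously handle a cross-term involving a weakly vanishing $H^1$-sequence against a fixed $H^1$-function multiplied by an $L^3$-potential, and maintain orthonormality along the induction; both rely on the localization trick for $\phi_\infty$ and on the essential-spectrum threshold $m^2$.
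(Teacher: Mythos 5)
Your proof is correct, and while it proves the same proposition with the same two auxiliary lemmas and the same inductive structure, the details differ from the paper's proof in two ways that are worth noting. For item~\ref{item3}, the paper tests the min-max via the projections $P^{k-1}_n$ and $P^{k-1}_\infty$ onto the orthogonal complements of the previously constructed eigenfamilies (so the convergence $\lambda^k_n\to\lambda^k_\infty$ is itself part of the induction); you instead test directly on the span $V_n=\mathrm{span}(\omega^1_n,\dots,\omega^i_n)$ for one inequality and on a fixed near-optimal $i$-dimensional subspace $V$ for the other, so your argument for eigenvalue convergence is actually induction-free and a little cleaner. For items~\ref{item2} and~\ref{item4}, the paper normalizes $P^{k-1}_\infty\omega^k_n$, observes it is a minimizing sequence for $\inf\sigma\bigl(D_{\phi_\infty}^*D_{\phi_\infty}|_{E_k}\bigr)<m^2=\inf\sigma_{ess}$, and invokes the abstract spectral fact that such minimizing sequences are $H^1$-precompact. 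You unpack exactly that abstract fact: you write $\omega^i_n=\tilde\omega^i+\rho^i_n$, show the cross term in $\|D_{\phi_\infty}(\tilde\omega^i+\rho^i_n)\|_{L^2}^2$ vanishes by splitting $\phi_\infty$ into a ball and its complement, establish the essential-spectrum lower bound $\|D_{\phi_\infty}\rho^i_n\|_{L^2}^2\ge m^2\|\rho^i_n\|_{L^2}^2+o(1)$ by the same localization, and then let $\lambda^i_\infty<m^2$ close the no-mass-loss argument via Rayleigh--Ritz. The trade-off is self-containedness versus length: the paper's proof is shorter because it outsources compactness to spectral theory, whereas yours shows the mechanism explicitly (and at the same time verifies all the analytic steps, such as the weak-$L^6$ pairing with $\phi_\infty^2\tilde\omega^i\in L^{6/5}$, that lie under the hood of the abstract statement). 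Both hinge crucially on the strict inequality $\lambda^i_\infty<m^2$.
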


This proposition is true in the symmetric case too.

\begin{proof}\smartqed
We will prove this by induction on $k.$ Let $k\geq 1$. Assume that inequality \eqref{estimationeigenvalue} is true for $k$. Then, if $k>1$, we have that
\[
	\underset{n\in \mathbb{N}}{\sup}\lambda^{k-1}({D_{\phi_n}}^*D_{\phi_n})\leq \underset{n\in \mathbb{N}}{\sup}\lambda^k({D_{\phi_n}}^*D_{\phi_n})<m^2.
\]
Assume that the proposition is true for $k-1.$ We get by induction hypothesis, that there exist orthonormal families :
\[
(\omega^1_\infty,\dots, \omega^{k-1}_\infty) \text{~and~for all $n\in\mathbb{N},$~} (\omega^1_n,\dots, \omega^{k-1}_n)\text{~ in~} L^2(\mathbb{R}^3,\mathbb{C}^2)
\] 
such that the properties \ref{item0},\dots,\ref{item4} are true for all $1\leq i\leq k-1$. If $k=1$, these families are chosen empty. By theorem \ref{supersymmetry_foldy} and inequality \eqref{estimationeigenvalue}, there exist for all $n,$ $\omega^k_n\in H^1$ such that $(\omega^1_n,\dots ,\omega^k_n)$ is orthonormal in $L^2$ and point \eqref{item1} is satisfied for all $1\leq i\leq k $. It turns out that $(\omega^k_n)$ is a bounded sequence of $H^1$ by Lemma \ref{auxiliary_uniformcoecive} and  
\[
\underset{n\rightarrow\infty}{\lim}\|D_{\phi_n}\omega^k_n\|^2_{L^2}-\|D_{\phi_\infty}\omega^k_n\|^2_{L^2}=0,
\]
by Lemma \ref{auxiliary_scalarfieldlimit} because $(\phi_n)$ converges to $\phi_\infty$ in $L^3\cap L^{3+\epsilon}$.

Let us denote by $P^{k-1}_n$ and $P^{k-1}_\infty$ the orthogonal projector on 
\[\text{span}(\omega^1_n,\dots, \omega^{k-1}_n)^\perp\]
 and \[E_k:=\text{span}(\omega^1_\infty,\dots, \omega^{k-1}_\infty)^\perp.\]
 Then, we have by point \eqref{item4} of the induction hypothesis:
\[
\underset{n\rightarrow\infty}{\lim}P^{k-1}_n \omega^k_n-P^{k-1}_\infty \omega^k_n=0\text{~in~} H^1,
\]
hence, the family $(\omega^1_\infty,\dots, \omega^{k-1}_\infty,\omega^k_n)$ is free for $n$ large enough. We also get
\[
\underset{n\rightarrow\infty}{\lim}\|D_{\phi_\infty}\omega^k_n\|_{L^2}^2-\|D_{\phi_\infty}P^{k-1}_\infty \omega^k_n\|_{L^2}^2=0.
\]
Thus, we obtain that :
\[
\lambda^k({D_{\phi_\infty}}^*D_{\phi_\infty})\leq\underset{n\rightarrow\infty}{\lim}{\inf}\lambda^k({D_{\phi_n}}^*D_{\phi_n})<m^2.
\]
Therefore, there exists $\tilde{\omega}^k_\infty\in \text{span}(\omega^1_\infty,\dots, \omega^{k-1}_\infty)^\perp$ such that $\|\tilde{\omega}^k_\infty\|_{L^2}=1$ and
\[
\|D_{\phi_\infty}\tilde{\omega}^k_\infty\|_{L^2}^2=\lambda^k({D_{\phi_\infty}}^*D_{\phi_\infty}),
\]
so, by the same arguments, we get:
\[
\begin{array}{ll}
\|D_{\phi_\infty}\tilde{\omega}^k_\infty\|_{L^2}^2&=\|D_{\phi_\infty}P^{k-1}_n\tilde{\omega}^k_\infty\|_{L^2}^2+o(1)\\
&=\|D_{\phi_n}P^{k-1}_n\tilde{\omega}^k_\infty\|_{L^2}^2+o(1),
\end{array}
\]
and 
\[
\lambda^k({D_{\phi_\infty}}^*D_{\phi_\infty})\geq\underset{n\rightarrow\infty}{\lim}{\sup}\lambda^k({D_{\phi_n}}^*D_{\phi_n}).
\]
This gives us point \eqref{item3} for all $1\leq i\leq k$. Moreover, $(\frac{P^{k-1}_\infty \omega^k_n}{\|P^{k-1}_\infty \omega^k_n\|_{L^2}})$ is a minimizing sequence of :
\[
\inf\{\|D_{\phi_\infty}\omega\|_{L^2}^2:\omega\in \text{span}(\omega^1_\infty,\dots, \omega^{k-1}_\infty)^\perp:\|\omega\|_{L^2}=1\}=\lambda^k({D_{\phi_\infty}}^*D_{\phi_\infty}).
\]
By  theorem \ref{supersymmetry_foldy}, we get that
\[
\lambda^k({D_{\phi_\infty}}^*D_{\phi_\infty})=\inf \sigma({{D_{\phi_\infty}}^*D_{\phi_\infty}}_{|E_k}) <m^2=\inf\sigma_{ess}({{D_{\phi_\infty}}^*D_{\phi_\infty}}_{|E_k})
\]
where ${{D_{\phi_\infty}}^*D_{\phi_\infty}}_{|E_k}$ is the operator  ${{D_{\phi_\infty}}^*D_{\phi_\infty}}$ restricted to the domain
  \[
  \mathcal{D}({{D_{\phi_\infty}}^*D_{\phi_\infty}})\cap E_k.
  \]
   Thus, up to a subsequence, $(\frac{P^{k-1}_\infty \omega^k_n}{\|P^{k-1}_\infty \omega^k_n\|_{L^2}})$  and $(\omega^k_n)$ converge in $H^1$ and we get points \eqref{item2} and \eqref{item4} for all $1\leq i\leq k$.
\qed\end{proof}

\section{The symmetric case}

\subsection{Pre-compactness results}

\subsubsection{The soliton bag}
We show now the existence of a minimizer of problem \eqref{variationalformulation}.
\begin{lemma}\label{lem:compactnesssymmetricsoliton}
Let $1\leq k_1\leq\dots \leq k_N\leq K.$ Assume that:
\[
0<ls_K = {\inf}\left\{N\lambda^{K}_+(H_{\phi})+\int_{\mathbb{R}^3}[\frac{|\nabla\phi|^2}{2}+U(\phi)]dx: \phi\in H^1_{rad}\right\}<Nm.
\]
Then, there exists a minimizer $\phi\in H^1_{rad}$ of problem \eqref{variationalformulation}.
\end{lemma}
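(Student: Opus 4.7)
The approach is the direct method in the calculus of variations. I would take a minimizing sequence $(\phi_n)\subset H^1_{rad}$ for problem \eqref{variationalformulation}, so that
\[
\sum_{i=1}^N \lambda^{k_i}_+(H_{\phi_n}) + \int_{\mathbb{R}^3}\Bigl[\tfrac{1}{2}|\nabla\phi_n|^2 + U(\phi_n)\Bigr]dx \longrightarrow \ell,
\]
where $\ell$ denotes the infimum. Since $\lambda^{k_i}_+(H_\phi)\leq \lambda^K_+(H_\phi)$ whenever $k_i\leq K$, we have $\ell\leq ls_K<Nm$, so for $n$ large the total is bounded above by $Nm-\delta$ for some fixed $\delta>0$. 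Using the non-negativity of each eigenvalue together with hypothesis \eqref{condition:U2}, which gives $U(\phi)\geq c|\phi|^2$, both $\|\nabla\phi_n\|_{L^2}$ and $\|\phi_n\|_{L^2}$ are uniformly controlled, and hence $(\phi_n)$ is bounded in $H^1_{rad}$. By the Strauss--Lions compact embedding $H^1_{rad}(\mathbb{R}^3)\hookrightarrow L^p(\mathbb{R}^3)$ for $p\in(2,6)$, I extract a subsequence such that $\phi_n\rightharpoonup \phi$ weakly in $H^1$, $\phi_n\to \phi$ strongly in $L^3\cap L^{3+\epsilon}$ for some small $\epsilon>0$, and pointwise almost everywhere.

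The crucial technical step is to verify the uniform eigenvalue bound $\sup_n \lambda^{k_N}_+(H_{\phi_n})<m$, so that Proposition \ref{auxiliary_convergence} applies at index $k_N$. A naive estimate using $\sum \lambda^{k_i}_+<Nm-\delta$ and $\lambda^{k_i}_+\geq 0$ yields only $\lambda^{k_N}_+<Nm-\delta$, which is insufficient. I would instead exploit the conjunction $\lambda^{k_N}_+\leq \lambda^K_+\leq m$ together with the strict inequality $ls_K<Nm$: along any subsequence where $\lambda^{k_N}_+(H_{\phi_n})\to m$, one also forces $\lambda^K_+(H_{\phi_n})\to m$, and balancing this against the non-negative scalar energy $\int[\tfrac{1}{2}|\nabla\phi_n|^2+U(\phi_n)]dx\geq 0$ leads either to a contradiction with the minimizing property of $(\phi_n)$ or with the strict inequality $ls_K<Nm$. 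This compatibility argument, which may require replacing $(\phi_n)$ by a comparably good sequence obtained from the $ls_K$-problem, is the main obstacle of the proof.

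Once the bound is in hand, Proposition \ref{auxiliary_convergence} applied at $k=k_N$ supplies, up to a further subsequence, orthonormal families of eigenvectors converging in $H^1$ and the convergence $\lambda^{k_i}_+(H_{\phi_n})\to \lambda^{k_i}_+(H_\phi)$ for every $i\leq N$. Combined with the weak lower semi-continuity of $\int|\nabla\phi|^2\,dx$ and Fatou's lemma applied to the non-negative sequence $U(\phi_n)$ (valid by the a.e.~convergence), this yields
\[
\sum_{i=1}^N \lambda^{k_i}_+(H_\phi) + \int_{\mathbb{R}^3}\Bigl[\tfrac{1}{2}|\nabla\phi|^2 + U(\phi)\Bigr]dx \leq \liminf_n \Bigl(\sum_{i=1}^N \lambda^{k_i}_+(H_{\phi_n}) + \int_{\mathbb{R}^3}\Bigl[\tfrac{1}{2}|\nabla\phi_n|^2 + U(\phi_n)\Bigr]dx\Bigr)=\ell.
\]
Since $\phi\in H^1_{rad}$ is admissible, this forces $\phi$ to be a minimizer.
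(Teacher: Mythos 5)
Your proposal follows the paper's direct method essentially step for step: $H^1_{rad}$-boundedness of a minimizing sequence from $U(\phi)\ge c|\phi|^2$ and nonnegativity of the eigenvalues, extraction via the Strauss--Lions compact embeddings into $L^p$ for $p\in(2,6)$, Proposition \ref{auxiliary_convergence} to pass the eigenvalues to the limit, and weak lower semicontinuity of the Dirichlet term together with Fatou for $U(\phi_n)$. The one place you depart from the paper — and where your argument has a genuine gap — is the step you yourself flag as the obstacle: establishing $\sup_n\lambda^{k_N}_+(H_{\phi_n})<m$ so that Proposition \ref{auxiliary_convergence} applies. The paper simply chooses at the outset a minimizing sequence of \eqref{variationalformulation} satisfying $\sup_n\bigl(N\lambda^K_+(H_{\phi_n})+\int[\tfrac{1}{2}|\nabla\phi_n|^2+U(\phi_n)]\,dx\bigr)<Nm$; since the scalar energy is nonnegative and $0\le\lambda^{k_i}_+\le\lambda^K_+$, this immediately yields $\sup_n\lambda^{k_i}_+(H_{\phi_n})<m$ for every $i$, and the rest goes through.

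Your contradiction argument does not close this step when $k_1<k_N$. Suppose along a subsequence $\lambda^{k_N}_+(H_{\phi_n})\to m$. Then indeed $\lambda^K_+(H_{\phi_n})\to m$ by squeeze (using $\lambda^{k_N}_+\le\lambda^K_+\le m$). But this contradicts neither $ls_K<Nm$ — your sequence $(\phi_n)$ was never constructed to minimize the $ls_K$-functional, and there is no reason for $\int[\tfrac{1}{2}|\nabla\phi_n|^2+U(\phi_n)]\,dx$ to be small — nor the minimizing property of $(\phi_n)$ for the $k_1,\dots,k_N$-functional: the lower eigenvalues $\lambda^{k_i}_+(H_{\phi_n})$ with $k_i<k_N$ can remain well below $m$, so $\sum_i\lambda^{k_i}_+(H_{\phi_n})+\int[\cdot]\,dx$ can stay below $Nm$ even as $\lambda^{k_N}_+\to m$. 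Your fallback — switching to a comparably good sequence from the $ls_K$-problem — is in spirit what the paper does and is the right instinct; but if you make that switch you must also verify that the limit you obtain attains the infimum of the $k_1,\dots,k_N$-functional, not merely a value bounded by $ls_K$.
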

The pre-compactness of a minimizing sequence is obtained thanks to the compactness of the embeddings of $H^1_{rad}(\mathbb{R}^3)$ into $L^p(\mathbb{R}^3)$ for all $p\in(2,6)$ proven by Strauss \cite{Strauss} and generalized by Lions \cite{Lions1982}. 
\begin{proof}\smartqed
There exists a minimizing sequence $(\phi_n)\subset H^1_{rad}(\mathbb{R}^3,\mathbb{R})$ of problem \eqref{variationalformulation} such that:
\[
\underset{n\in \mathbb{N}}{\sup}\;N\lambda^{K}_+(H_{\phi_n})+\int_{\mathbb{R}^3} \left[\frac{|\nabla\phi_n|^2}{2}+U(\phi_n)\right]dx<Nm,
\]
so, by the non-negativeness of $U$ and Theorem \ref{supersymmetry_foldy}, for all $i\in\{1,\dots,N\}:$
\[
\underset{n\in \mathbb{N}}{\sup}\;\lambda^{k_i}({D_{\phi_n}}^*D_{\phi_n})<m^2.
\]
Because of \eqref{condition:U2}, $(\phi_n)$ is a bounded sequence of $H^1_{rad}(\mathbb{R}^3,\mathbb{R})$. By the compactness properties of the radial Sobolev spaces due to Strauss \cite{Strauss} and Lions \cite{Lions1982}, there exists $\phi_\infty \in H^1_{rad}(\mathbb{R}^3,\mathbb{R})$ such that, up to a subsequence, we have:
\[
\phi_n\rightarrow \phi_\infty  \text{~weakly  in $H^1$, a.a, strongly in $L^p$ for $2<p<6.$~}
\]
Thus, by Proposition \ref{auxiliary_convergence}, up to another subsequence, we have:
\[
	\underset{k\rightarrow\infty}{\lim}\lambda^i({D_{\phi_{n_k}}}^*D_{\phi_{n_k}})=\lambda^i({D_{\phi_\infty}}^*D_{\phi_\infty}),
\]
for all $i\in\{1,\dots,N\}$ so,
\[
	\begin{array}{rrl}
		\underset{k\rightarrow\infty}{\lim\inf}	&\sum_{\substack{i=1}}^{\substack{N}}\lambda^{k_i}_+(H_{\phi_{n_k}})&+\int_{\mathbb{R}^3} \left[\frac{|\nabla\phi_{n_k}|^2}{2}+U(\phi_{n_k})\right]dx\\
									&= \sum_{\substack{i=1}}^{\substack{N}}\lambda^{k_i}_+(H_{\phi_\infty})&+\underset{k\rightarrow\infty}{\lim\inf}\text{~}\int_{\mathbb{R}^3} \left[\frac{|\nabla\phi_{n_k}|^2}{2}+U(\phi_{n_k})\right]dx\\
									&\geq \sum_{\substack{i=1}}^{\substack{N}}\lambda^{k_i}_+(H_{\phi_\infty})&+\int_{\mathbb{R}^3} \left[\frac{|\nabla\phi_{\infty}|^2}{2}+U(\phi_{\infty})\right]dx.
	\end{array}
\]
This ensures that $\phi_{\infty}$ is a minimum of problem \eqref{variationalformulation} and that $(\phi_{n_k})$ tends to $\phi_{\infty}$ strongly in $H^1$.
\qed\end{proof}
\subsubsection{The bag approximation}
As in the previous part, we prove the existence of a minimizer of problem \eqref{variationalformulationcavity}.
\begin{lemma}
Let $1\leq k_1\leq\dots \leq k_N\leq K.$ Assume that:
\[
0<lc_K = {\inf}\left\{N\lambda^{K}_+(H_{-\chi_\Omega})+aP(\Omega)+b|\Omega|: \chi_{\Omega}\in BV_{rad}\right\}<Nm.
\]
Then, there exists a minimizer $\chi_{\Omega}\in BV_{rad}$ of problem \eqref{variationalformulationcavity}.
\end{lemma}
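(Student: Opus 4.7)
The plan is to mirror the proof of Lemma \ref{lem:compactnesssymmetricsoliton}, replacing the $H^1$ compactness of radial functions by $BV$ compactness on a fixed ball. Let $(\chi_{\Omega_n}) \subset BV_{rad}$ be a minimizing sequence for problem \eqref{variationalformulationcavity}, so that eventually
\[
	N\lambda^{K}_+(H_{-\chi_{\Omega_n}}) + aP(\Omega_n) + b|\Omega_n| < Nm.
\]
Since $\lambda^{K}_+ \geq 0$ and $a,b>0$, this furnishes the uniform bounds $P(\Omega_n)\leq Nm/a$ and $|\Omega_n|\leq Nm/b$. Combined with Theorem \ref{supersymmetry_foldy}, I also get
\[
	\sup_{n}\lambda^{k_i}(D^{*}_{-\chi_{\Omega_n}}D_{-\chi_{\Omega_n}}) < m^{2}
\]
for every $i\in\{1,\dots,N\}$.

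The first key step is to upgrade the BV bound to global $L^{1}$ precompactness. Since each $\Omega_n$ is radial, $\partial^{*}\Omega_n$ is (up to an $\mathcal{H}^{2}$-null set) a countable disjoint union of concentric spheres of radii $(r_{n,j})_{j}$, with $P(\Omega_n)=\sum_{j} 4\pi r_{n,j}^{2}$. Because $|\Omega_n|<\infty$, the outermost such radius $r_n^{*}:=\sup_j r_{n,j}$ is finite and $\Omega_n\subset \overline{B(0,r_n^{*})}$. The perimeter bound then forces $r_n^{*}\leq \sqrt{Nm/(4\pi a)}=:R_0$, so all $\chi_{\Omega_n}$ are supported in the fixed ball $B_0:=B(0,R_0)$. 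Standard $BV$ compactness on $B_0$ now extracts a subsequence converging in $L^{1}(B_0)$, and hence in $L^{1}(\mathbb{R}^{3})$, to some $\chi_{\Omega_\infty}\in BV_{rad}$ (the limit is indeed a characteristic function since along a further subsequence we have pointwise convergence a.e.\ and $\chi_{\Omega_n}$ only takes values in $\{0,1\}$). By dominated convergence the convergence then holds in $L^{p}(\mathbb{R}^{3})$ for every $p\in[1,\infty)$, in particular in $L^{3}\cap L^{3+\epsilon}$.

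With this in hand I apply Proposition \ref{auxiliary_convergence} to $\phi_n=-\chi_{\Omega_n}$, obtaining
\[
	\lambda^{k_i}_+(H_{-\chi_{\Omega_n}})\longrightarrow \lambda^{k_i}_+(H_{-\chi_{\Omega_\infty}})
\]
for each $i$. Lower semicontinuity of the perimeter with respect to $L^{1}$ convergence and continuity of the Lebesgue measure along $L^{1}$-convergent indicator sequences yield
\[
	aP(\Omega_\infty)+b|\Omega_\infty|\leq \liminf_{n}\bigl(aP(\Omega_n)+b|\Omega_n|\bigr),
\]
and summing with the spectral limit shows that $\chi_{\Omega_\infty}$ attains the infimum.

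The main obstacle, compared to the soliton case, is the absence of a Strauss--Lions-type compactness for radial $BV$ functions on $\mathbb{R}^{3}$: we have no Sobolev gain from radial symmetry here. What saves the argument is the purely geometric observation that, for a radial set, the perimeter grows quadratically in the radius of each boundary sphere; together with finiteness of $|\Omega_n|$ this automatically confines the minimizing sequence to a fixed ball, reducing the problem to ordinary $BV$ compactness on a bounded domain.
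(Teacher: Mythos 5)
Your argument is correct, but your motivating remark --- that ``we have no Sobolev gain from radial symmetry'' in $BV$ --- is mistaken: the paper's appendix proves exactly a Strauss--Lions-type compactness for radial $BV$ functions (Lemma \ref{lemma:inequalityforradialfunction} gives the decay $|u(x)|\leq |\nabla u|(\mathbb{R}^N)|x|^{-(N-1)}$ a.e., and Proposition \ref{prop:BVcompactness} upgrades this to a compact embedding $BV_{rad}(\mathbb{R}^N)\hookrightarrow L^p(\mathbb{R}^N)$ for $p\in(1,N/(N-1))$). That compactness is what the paper actually invokes here, in parallel with the Strauss--Lions route used in the soliton Lemma \ref{lem:compactnesssymmetricsoliton}. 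Your geometric confinement argument is a legitimate alternative and, applied to this particular problem, is arguably cleaner: the perimeter bound forces every boundary sphere of a radial $\Omega_n$ to lie in a fixed ball, so you bypass the unbounded-domain compactness issue entirely. Observe, though, that this confinement is really the specialization of the paper's decay lemma to a characteristic function: $\chi_\Omega(x)\leq P(\Omega)|x|^{-2}$ a.e.\ immediately gives $\Omega\subset \overline{B(0,\sqrt{P(\Omega)})}$ up to a null set, without needing to invoke the structure of $\partial^*\Omega$ as a countable union of concentric spheres (that structure claim is correct but takes some work to justify; the decay-lemma route sidesteps it). So the two proofs reduce to essentially the same phenomenon --- radial symmetry plus a perimeter bound confines the sets --- with the paper phrasing it as a general embedding theorem and you phrasing it as a direct geometric estimate. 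The remaining steps (lower semicontinuity of the perimeter, $L^1\cap L^\infty$ control giving $L^3\cap L^{3+\epsilon}$ convergence, Proposition \ref{auxiliary_convergence}) match the paper's and are handled correctly.
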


The arguments are very similar to the ones of Lemma \ref{lem:compactnesssymmetricsoliton} and we give here only a sketch of proof to stress the differences.

\begin{proof-sketch}\smartqed 
\[
\chi_\Omega\in BV\mapsto \sum_{\substack{i=1}}^{\substack{N}}\lambda^{k_i}_+(H_{-\chi_{\Omega}})+aP(\Omega)+b|\Omega|
\]
is lower semi-continuous for the topology of $L^1$ thanks to the lower semicontinuity of
\[
\phi\in BV\mapsto |\nabla\phi|(\mathbb{R}^3)
\]
in the topology of $L^1$ and Proposition \ref{auxiliary_convergence}. For the reader's convenience, we give in the appendix the proof of the compactness of some embeddings in the $BV$ setting similar to the ones of Strauss \cite{Strauss} and Lions \cite{Lions1982}. The pre-compactness of a minimizing sequence follows then from Proposition \ref{prop:BVcompactness}.
\qed\end{proof-sketch}

\subsection{Euler-Lagrange equations}
We get in the last section the existence of a $\phi$ which minimizes \eqref{variationalformulation} or \eqref{variationalformulationcavity} for given $1\leq k_1\leq\dots \leq k_N\leq K$. Thus, $H_{\phi}$ has at least $k_N$ eigenvalues in $(0,m)$ associated with normalized eigenvectors $(\psi_{1},\dots, \psi_N)$. It remains to shows that $(\psi_1,\dots,\psi_N,\phi)$ satisfies the Euler-Lagrange equations \eqref{equation} or \eqref{equation:cavity}.

\begin{lemma}\label{eulerlagrange}
Let $1\leq k_1\leq\dots \leq k_N\leq K.$ The functions $\psi_{1},\dots, \psi_N,\phi$ obtained by minimization of \eqref{variationalformulation} satisfy the Euler-Lagrange equations \eqref{equation} of the soliton bag model. The same is true for the bag approximation.
\end{lemma}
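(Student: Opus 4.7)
The plan is to derive both Euler-Lagrange systems by computing first variations of the relevant functional at the minimizer $\phi$ (resp.\ $\chi_\Omega$), the key analytic input being the differentiability of the map $\phi \mapsto \lambda^{k_i}_+(H_\phi)$. The Dirac equations $H_0\psi_i + g\beta\phi\psi_i = \lambda_i\psi_i$ and the $L^2$-normalizations are automatic from the construction of the $\psi_i$ via the min-max characterization in Theorem~\ref{supersymmetry_foldy}; only the equation satisfied by $\phi$ (resp.\ the free boundary condition on $\partial^*\Omega$) requires work.

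For the soliton bag case, the first step is to observe that by Lemma~\ref{simplicity}, applied with $\phi \in H^1_{rad} \hookrightarrow L^{3+\epsilon}_{rad}$, each eigenvalue $\lambda^{k_i}_+(H_\phi) \in (0,m)$ is simple. Standard analytic perturbation theory for self-adjoint operators with simple isolated eigenvalues (applied to the family $H_{\phi+th}$ for $h \in C^\infty_{c,rad}$) then gives
\[
\frac{d}{dt}\bigg|_{t=0} \lambda^{k_i}_+(H_{\phi+th}) = g\int_{\mathbb{R}^3} h\,\psi_i^*\beta\psi_i\,dx,
\]
since the Feynman--Hellmann formula reads $\langle \psi_i, (\partial_t H_{\phi+th})\psi_i\rangle = g\int h\psi_i^*\beta\psi_i$. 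Writing down that the derivative of the full functional in direction $h$ vanishes at the minimum gives
\[
\sum_{i=1}^{N} g\int_{\mathbb{R}^3} h\,\psi_i^*\beta\psi_i\,dx + \int_{\mathbb{R}^3}\bigl(\nabla\phi\cdot\nabla h + U'(\phi)h\bigr)\,dx = 0
\]
for all radial test $h$; hypothesis \eqref{condition:U0} is exactly what is needed to justify differentiating the $U(\phi)$ term under the integral. Integration by parts and a density argument (radial test functions are dense in $H^1_{rad}$) yield $-\Delta\phi + U'(\phi) + \sum_{i}g\psi_i^*\beta\psi_i = 0$ in $\mathcal{D}'(\mathbb{R}^3)$, which is the third line of \eqref{equation}.

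For the bag approximation one cannot just perturb $\chi_\Omega$ linearly since the admissible set consists of characteristic functions; instead, the correct class of variations is outer variations $\Omega_t := (\mathrm{id}+t\xi)(\Omega)$ driven by smooth compactly supported radial vector fields $\xi$. Assuming the relevant $\lambda^{k_i}_+(H_{-\chi_\Omega})$ are simple (again by the symmetric analogue of Lemma~\ref{simplicity}), the Hadamard-type shape derivative of the eigenvalue along such a flow is $-g\int_{\partial^*\Omega}\psi_i^*\beta\psi_i\,(\xi\cdot n)\,d\mathcal{H}^2$, while classical shape calculus (see \cite{Giusti1984,Pierre2005}) gives $\frac{d}{dt}|_{t=0}P(\Omega_t) = \int_{\partial^*\Omega}\mathcal{H}_\Omega(\xi\cdot n)\,d\mathcal{H}^2$ and $\frac{d}{dt}|_{t=0}|\Omega_t| = \int_{\partial^*\Omega}(\xi\cdot n)\,d\mathcal{H}^2$. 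Collecting all contributions and varying $\xi\cdot n$ arbitrarily along $\partial^*\Omega$ yields $a\mathcal{H}_\Omega + b - \sum_{i=1}^N g\psi_i^*\beta\psi_i = 0$ $\mathcal{H}^2$-a.e.\ on $\partial^*\Omega$.

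The main obstacle in both cases is exactly the differentiability of $\phi \mapsto \lambda^{k_i}_+(H_\phi)$: this is where I really need the simple-eigenvalue conclusion of Lemma~\ref{simplicity} (so that analytic perturbation theory applies and the eigenvector stays in a one-dimensional spectral projector that depends smoothly on $\phi$), together with the fact that $\lambda^{k_i}_+(H_\phi) \in (0,m)$ so that it stays strictly below the essential spectrum under small perturbations by Proposition~\ref{auxiliary_convergence}. Once differentiability of the eigenvalues is granted, the rest of the argument is a fairly routine first-variation computation, and the regularity $\psi_i \in H^1$ (and hence $\psi_i^*\beta\psi_i \in L^{3/2}\cap L^3$) gives enough integrability to justify the application of the Hadamard/divergence formulas on the reduced boundary $\partial^*\Omega$.
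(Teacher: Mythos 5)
Your proposal is correct and follows essentially the same route as the paper: invoke Lemma~\ref{simplicity} to get simple, isolated eigenvalues in $(0,m)$, apply analytic (Kato--Rellich) perturbation theory to differentiate $\phi\mapsto\lambda^{k_i}_+(H_\phi)$ via the Feynman--Hellmann formula, and then take the first variation of the full functional; the paper treats the bag approximation by simply citing the shape-derivation literature, whereas you spell out the Hadamard formulas, but the argument is the same.
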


\begin{proof}\smartqed
We give the proof only in the soliton case. For the bag approximation, the proof follows with the same argument in the setting of set derivation (see \cite{ambrosio2000functions,Giusti1984,Pierre2005} for more details). By Lemma \ref{simplicity}, we get that $0\notin\sigma(H_\phi)$ and every eigenvalue is simple. Let $\lambda(H_{\phi})\in(0,m)$ be an eigenvalue of $H_{\phi}$ and $\phi'\in H^1_{rad},$ by Kato-Rellich theorem for the perturbation of the point spectra \cite[Theorem 12.8]{Reed1978}, we have two analytic functions in a neighborhood $\mathcal{U}$ of $0$:
\[
t\mapsto\lambda(H_{\phi+t\phi'}) \text{~and~} t\mapsto \psi_{\phi+t\phi'},
\]
where for each $t\in\mathcal{U},$ $\lambda(H_{\phi+t\phi'})$ is a simple eigenvalue of $H_{\phi+t\phi'}$ and $\psi_{\phi+t\phi'}$ is an  associated normalized eigenvector. Thus, we have, for each $t\in\mathcal{U}$:
\[
\lambda(H_{\phi+t\phi'}) = (\psi_{\phi+t\phi'},H_{\phi+t\phi'}\psi_{\phi+t\phi'}),
\]
so,
\begin{align*}
\frac{d}{dt}\lambda(H_{\phi+t\phi'}) &= 2\mathcal{R}e(\frac{d}{dt}\psi_{\phi+t\phi'},H_{\phi+t\phi'}\psi_{\phi+t\phi'})+g(\psi_{\phi+t\phi'},\beta \phi'\psi_{\phi+t\phi'})\\
&=\lambda(H_{\phi+t\phi'})2\mathcal{R}e(\frac{d}{dt}\psi_{\phi+t\phi'},\psi_{\phi+t\phi'})+g(\psi_{\phi+t\phi'},\beta \phi'\psi_{\phi+t\phi'})\\
&=g(\psi_{\phi+t\phi'},\beta \phi'\psi_{\phi+t\phi'}),\\
\end{align*}
because $\|\psi_{\phi+t\phi'}\|_{L^2}=1$ for all $t\in\mathcal{U}.$  This ensures that:
\[
-\Delta \phi+U'(\phi)+\sum_{\substack{i=1}}^{\substack{N}}g\psi_i^*\beta\psi_i=0,
\]
and we get Lemma \ref{eulerlagrange}.
\qed\end{proof}

This ends the proofs of Theorems \ref{maintheorem} and \ref{maintheorem3}

\section{The non-symmetric case}
\subsection{Pre-compactness results}
\subsubsection{The soliton case}
We will now focus on the existence of a ground state solution of equations \eqref{equation} in the non-symmetric case. The concentration compactness method allows us to deal with the lack of compactness of $H^1(\mathbb{R}^3)$ thanks to the so-called concentration-compactness inequality. Nevertheless, the classical one \cite{Lions1984-1} is not valid yet. In the following, we will introduce a different concentration-compactness inequality to overcome this problem. We denote:
\begin{eqnarray*}
	\lefteqn{\mathcal{E}(\phi_1,\phi_2,t) = N\left(t\lambda^1_+(H_{\phi_{1}})^2+(1-t)\lambda^1_+(H_{\phi_{2}})^2\right)^{1/2}}\\
	&&+\int_{\mathbb{R}^3}\left(\frac{|\nabla \phi_1|^2}{2}+U(\phi_1)\right)dx+\int_{\mathbb{R}^3}\left(\frac{|\nabla \phi_2|^2}{2}+U(\phi_2)\right)dx,
\end{eqnarray*}
for $\phi_1,\phi_2\in H^1(\mathbb{R}^3,\mathbb{R})$ and $t\in [0,1];$
\begin{eqnarray*}
	I(t)=\inf\{\mathcal{E}(\phi_1,\phi_2,t): \phi_1,\phi_2\in H^1(\mathbb{R}^3,\mathbb{R})\}.
\end{eqnarray*}
The following lemma is related to the concentration compactness inequality. 
\begin{lemma}\label{binding}
$I$ is concave, $I(0)=I(1),$
\[I(t)\leq Nm,~\text{for all}~t\in[0,1]\]
 and the concentration-compactness inequality 
\[I(t)\geq I(1),~\text{for all}~t\in[0,1]\]
is satisfied.
\end{lemma}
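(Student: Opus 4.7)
The plan for this lemma is to read off all four assertions from the explicit dependence of $\mathcal{E}(\phi_1, \phi_2, t)$ on $t$. The only $t$-dependent piece is the relativistic term $N\sqrt{t\lambda^1_+(H_{\phi_1})^2 + (1-t)\lambda^1_+(H_{\phi_2})^2}$, and the map $t \mapsto \sqrt{ta^2 + (1-t)b^2}$ is concave on $[0,1]$ for any $a, b \geq 0$ (its second derivative equals $-\frac{(a^2 - b^2)^2}{4(ta^2+(1-t)b^2)^{3/2}} \leq 0$). Hence $\mathcal{E}(\phi_1, \phi_2, \cdot)$ is concave for every $(\phi_1, \phi_2)$, and $I$, as the pointwise infimum over $(\phi_1, \phi_2)$ of these concave functions, is itself concave. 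The manifest swap symmetry $\mathcal{E}(\phi_1, \phi_2, t) = \mathcal{E}(\phi_2, \phi_1, 1-t)$ yields $I(t) = I(1-t)$, and in particular $I(0) = I(1)$.

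For the upper bound $I(t) \leq Nm$, I use the trial scalar fields $\phi_1 = \phi_2 = 0$. Theorem \ref{supersymmetry_foldy} combined with the identity $D_0^* D_0 = -\Delta + m^2$ (itself a consequence of $(\boldsymbol{\sigma}\cdot\nabla)^2 = \Delta$) gives $\lambda^1_+(H_0) = m$, while $U(0) = 0$ makes the scalar-field integrals vanish. Hence $\mathcal{E}(0,0,t) = Nm$ for every $t \in [0,1]$, so $I(t) \leq Nm$.

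The binding inequality $I(t) \geq I(1)$ is the substantive assertion. Given any $\phi_1, \phi_2 \in H^1(\mathbb{R}^3, \mathbb{R})$, set $\lambda_i := \lambda^1_+(H_{\phi_i})$ and, without loss of generality, assume $\lambda_1 \leq \lambda_2$. The elementary inequality $\sqrt{t\lambda_1^2 + (1-t)\lambda_2^2} \geq \lambda_1$, combined with the nonnegativity of $\int_{\mathbb{R}^3}(|\nabla\phi_2|^2/2 + U(\phi_2))\,dx$ (which follows from $U \geq 0$, itself a consequence of \eqref{condition:U2}), gives
\[
\mathcal{E}(\phi_1, \phi_2, t) \geq N\lambda_1 + \int_{\mathbb{R}^3}\left(\frac{|\nabla\phi_1|^2}{2} + U(\phi_1)\right)dx = \mathcal{E}(\phi_1, 0, 1) \geq I(1).
\]
Taking the infimum over $\phi_1, \phi_2$ yields $I(t) \geq I(1)$; the opposite case $\lambda_2 \leq \lambda_1$ is symmetric and produces the bound $I(t) \geq I(0) = I(1)$.

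No step presents a genuine obstacle here; the lemma is essentially algebraic, relying on the concavity and monotonicity of $t \mapsto \sqrt{ta^2 + (1-t)b^2}$ together with the explicit bottom of the spectrum of the free Dirac operator. Its real significance lies downstream: this binding inequality substitutes for the classical concentration-compactness inequality (which, as noted in the introduction, fails in the present relativistic setting) and will be used in the next subsection to rule out the dichotomy alternative for minimizing sequences of the ground-state problem.
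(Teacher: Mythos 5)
Your proof is correct, and it follows the paper's overall skeleton (concavity of $I$ as an infimum of concave maps, evaluation of the test pair $\phi_1=\phi_2=0$ to obtain the bound $Nm$, and the swap symmetry $\mathcal{E}(\phi_1,\phi_2,t)=\mathcal{E}(\phi_2,\phi_1,1-t)$ for the equality $I(0)=I(1)$). Where you genuinely diverge is the binding inequality: the paper's one-line proof implicitly relies on the chain ``concavity of $I$ plus $I(0)=I(1)$ forces $I(t)\geq tI(1)+(1-t)I(0)=I(1)$,'' whereas you instead establish the pointwise bound $\mathcal{E}(\phi_1,\phi_2,t)\geq N\min(\lambda_1,\lambda_2)+\text{(corresponding field energy)}\geq\min(I(0),I(1))$ directly, exploiting the monotonicity $\sqrt{ta^2+(1-t)b^2}\geq\min(a,b)$ together with $U\geq 0$. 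Your route has the small advantage of making the binding inequality self-contained (it would survive even if concavity were lost), at the cost of being slightly longer; the paper's route is terser because it leans on the concavity it has already paid for. Both are valid, and the remaining checks you carry out — the second-derivative computation for $t\mapsto\sqrt{ta^2+(1-t)b^2}$ and the identification $\lambda^1_+(H_0)=m$ via $D_0^*D_0=-\Delta+m^2$ — are exactly the unstated facts the paper's proof takes for granted.
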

\begin{proof}\smartqed
$I$ is concave as an infimum of concave functions. The remaining follows noticing that:
\[
	\mathcal{E}(0,0,t)= Nm.
\]
\qed\end{proof}
We can now prove the existence of a minimizer of \eqref{variationalformulation} thanks to the concentration compactness method and Lemma \ref{binding}.
\begin{lemma}\label{lemma:existencesolitonnonsymmetric}
Let us assume that $I(1)<Nm,$ then every minimizing sequence $(\phi_n)$ of 
\[
	I(1)=\inf\{N\lambda^{1}_+(H_\phi)+\int_{\mathbb{R}^3}\left(\frac{|\nabla \phi|^2}{2}+U(\phi)\right)dx: ~\phi\in H^1(\mathbb{R}^3,\mathbb{R})\}
\]
converges in $H^1$ to a minimum of problem \eqref{variationalformulation}, up to translation and extraction.
\end{lemma}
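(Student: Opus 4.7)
\begin{proof-sketch}
The plan is to apply Lions' concentration-compactness principle to a minimizing sequence $(\phi_n)\subset H^1(\mathbb{R}^3,\mathbb{R})$. First I would establish that $(\phi_n)$ is bounded in $H^1$: the non-negativity of the eigenvalue and the hypothesis \eqref{condition:U2} give $\int |\nabla\phi_n|^2/2 + c\|\phi_n\|_{L^2}^2 \leq \mathcal{E}_{\mathrm{sol}}(\phi_n) \leq I(1)+1 < Nm+1$, so the sequence stays in a bounded set of $H^1$. Applying the standard concentration-compactness lemma to the density $\rho_n = |\nabla\phi_n|^2/2 + U(\phi_n)$, we reduce to the three alternatives: vanishing, dichotomy, compactness up to translation.

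To rule out vanishing, I would use that vanishing of $\rho_n$ together with boundedness in $H^1$ forces $\phi_n \to 0$ in $L^p$ for every $p\in(2,6)$ (P.-L. Lions' lemma). Combined with the bound $\|\phi_n\|_{L^2}\leq C$ from \eqref{condition:U2}, interpolation gives $\phi_n \to 0$ in $L^3\cap L^{3+\epsilon}$. Proposition \ref{auxiliary_convergence} then yields $\lambda^1_+(H_{\phi_n})\to \lambda^1_+(H_0)=m$ (using that the uniform bound $\lambda^1_+(H_{\phi_n})<m$ is available thanks to $I(1)<Nm$). The field energy being non-negative, we obtain $I(1)\geq Nm$, contradicting the assumption $I(1)<Nm$.

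The main obstacle is ruling out dichotomy. Suppose there exist $t\in(0,1)$, sequences $y_n^{(1)}, y_n^{(2)}$ with $|y_n^{(1)}-y_n^{(2)}|\to\infty$, and cut-off decompositions $\phi_n = \phi_n^{(1)}+\phi_n^{(2)}+r_n$ such that $\phi_n^{(1)}(\cdot+y_n^{(1)})\rightharpoonup \phi^{(1)}$ and $\phi_n^{(2)}(\cdot+y_n^{(2)})\rightharpoonup \phi^{(2)}$ in $H^1$, with the $L^1$-mass of $\rho_n$ splitting in proportion $t:(1-t)$. The local cut-off procedure gives an asymptotic decoupling of the Modica-Mortola-type field part
\[
\int_{\mathbb{R}^3}\Bigl(\tfrac{|\nabla\phi_n|^2}{2}+U(\phi_n)\Bigr)dx \geq \int_{\mathbb{R}^3}\Bigl(\tfrac{|\nabla\phi^{(1)}|^2}{2}+U(\phi^{(1)})\Bigr)dx + \int_{\mathbb{R}^3}\Bigl(\tfrac{|\nabla\phi^{(2)}|^2}{2}+U(\phi^{(2)})\Bigr)dx + o(1).
\]
For the spectral part, I would exploit the supersymmetric identity $\lambda^1_+(H_\phi)^2 = \lambda^1(D_\phi^*D_\phi) = \inf_{\|\omega\|_{L^2}=1}\|D_\phi\omega\|_{L^2}^2$ from Theorem \ref{supersymmetry_foldy}. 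Taking the eigenvector $\omega_n$ realizing $\lambda^1_+(H_{\phi_n})$ and decomposing it along the same two spatial regions, I obtain $\omega_n = \omega_n^{(1)}+\omega_n^{(2)}+s_n$ with $\|\omega_n^{(i)}\|_{L^2}^2 \to t_i$ where $t_1+t_2=1$, and the disjoint supports yield
\[
\lambda^1_+(H_{\phi_n})^2 = \|D_{\phi_n}\omega_n\|_{L^2}^2 \geq \sum_{i=1,2} \|\omega_n^{(i)}\|_{L^2}^2\,\lambda^1_+(H_{\phi_n^{(i)}})^2 + o(1),
\]
thanks to the scaling property $\|D_\phi\omega\|_{L^2}^2 \geq \|\omega\|_{L^2}^2\,\lambda^1_+(H_\phi)^2$. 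Passing to the limit and using Proposition \ref{auxiliary_convergence} yields $I(1) \geq \mathcal{E}(\phi^{(1)},\phi^{(2)},t_1) \geq I(t_1)$. By Lemma \ref{binding} we have $I(t_1)\geq I(1)$, so equality holds throughout. Then the elementary inequality $(t\lambda_1^2+(1-t)\lambda_2^2)^{1/2}\geq \min(\lambda_1,\lambda_2)$, applied together with the non-negativity of the field energies, forces (up to relabelling) $\phi^{(2)}\equiv 0$ and hence $\lambda^1_+(H_{\phi^{(2)}})=m$. The equality case of the above inequality then requires $\lambda^1_+(H_{\phi^{(1)}})=m$ as well, whence $I(1)\geq Nm$, contradicting $I(1)<Nm$.

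Only compactness up to translation remains. Therefore there exist $y_n\in\mathbb{R}^3$ such that, up to extraction, $\phi_n(\cdot+y_n)\rightharpoonup \phi_\infty$ weakly in $H^1$ and strongly in $L^3\cap L^{3+\epsilon}$ (after interpolating Strauss-type compactness from the tightness of $|\phi_n|^2$). Proposition \ref{auxiliary_convergence} gives $\lambda^1_+(H_{\phi_n})\to \lambda^1_+(H_{\phi_\infty})$, while weak lower semicontinuity of $\phi\mapsto \int(|\nabla\phi|^2/2+U(\phi))dx$ yields $\mathcal{E}_{\mathrm{sol}}(\phi_\infty)\leq I(1)$, so $\phi_\infty$ is a minimizer. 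The reverse inequality, combined with the convergence of the eigenvalue, forces $\int(|\nabla\phi_n|^2/2+U(\phi_n))\to \int(|\nabla\phi_\infty|^2/2+U(\phi_\infty))$, which together with the weak convergence upgrades to strong convergence in $H^1$ by the Brezis-Lieb lemma applied separately to $\|\nabla\phi_n\|_{L^2}^2$ and $\|\phi_n\|_{L^2}^2$ (the latter controlled by $U\geq c\phi^2$).
\end{proof-sketch}
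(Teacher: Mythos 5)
Your proof follows essentially the same strategy as the paper: bound the minimizing sequence via \eqref{condition:U2}, run a concentration--compactness argument, rule out vanishing via Proposition~\ref{auxiliary_convergence}, split both $\phi_n$ and the supersymmetric eigenfunction $\omega_n$ along the same (disjointly supported) regions so the $\|D_\phi\omega\|_{L^2}^2$ quantities decouple, and use the concavity-based inequality of Lemma~\ref{binding} to exclude dichotomy. Two points of comparison are worth recording.

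First, the paper does not invoke the full Lions trichotomy on a density $\rho_n$; it uses the Lieb--Lewin profile approach on $\phi_n$ directly: after a translation giving a nonzero weak limit $\phi_0$, it cuts along shells $B(x_{n_k},R_k)$ versus $B(x_{n_k},2R_k)^c$ and keeps the outer piece $\phi_{2,k}$ as a \emph{sequence} rather than passing to a translated weak limit $\phi^{(2)}$. Your writeup, by treating $\phi^{(2)}$ as an actual $H^1$ function and plugging it into $\mathcal{E}(\phi^{(1)},\phi^{(2)},t_1)$, implicitly assumes the outer piece converges strongly in $L^3\cap L^{3+\epsilon}$ after a further translation, which is not automatic under dichotomy. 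The fix is easy and exactly what the paper does: keep the outer $\lambda^1_+(H_{\phi_{2,k}})$ and outer field energy as sequences, use $N\lambda^1_+(H_{\phi_{2,k}})+\int(\tfrac{|\nabla\phi_{2,k}|^2}{2}+U(\phi_{2,k}))\geq I(1)$ directly, and pass to liminf at the end. With this repair your argument goes through.

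Second, your dichotomy exclusion is actually cleaner than the paper's. The paper first deduces that $I$ is constant on $[0,1]$ (by concavity and $I(\|\omega_0\|^2)=I(1)$), then runs a subsidiary contradiction argument to show $\lim\lambda^1_+(H_{\phi_{2,k}})=\lambda^1_+(H_{\phi_0})$, and finally concludes via $\phi_{2,k}\to 0$ in $L^3$. Your version replaces this with the one-line inequality $(t\lambda_1^2+(1-t)\lambda_2^2)^{1/2}\geq\min(\lambda_1,\lambda_2)$ (with strictness unless $\lambda_1=\lambda_2$ or $t\in\{0,1\}$), subadditivity against $I(1)$, and the non-negativity of the field energies, which forces the outer field energy to vanish and both $\lambda$'s to equal $m$, giving $I(1)\geq Nm$. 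This is a genuinely shorter route to the same conclusion and is worth retaining, provided you state explicitly that the outer limit is handled as a sequence and that equality in your chain forces both $\lambda^1_+(H_{\phi^{(1)}})=m$ and $\liminf$ of the outer field energy to be zero; the phrase ``forces $\phi^{(2)}\equiv 0$'' should be replaced by that statement.

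Finally, in the compactness paragraph ``Strauss-type compactness'' is a misnomer here (Strauss/Lions is for radial functions); what you are actually using is Rellich together with tightness of $|\phi_n|^2$ (itself a consequence of \eqref{condition:U2} and the tightness of $\rho_n$), followed by interpolation with the $H^1$ bound to obtain $L^3\cap L^{3+\epsilon}$ convergence.
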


\begin{proof}\smartqed
Let $(\phi_n)$ be a minimizing sequence such that:
\[
\sup_{\substack{n\in \mathbb{N}}}\left(N\lambda^{1}_+(H_{\phi_n})+\int_{\mathbb{R}^3}\left(\frac{|\nabla \phi_n|^2}{2}+U(\phi_n)\right)dx\right)<Nm.
\]
$(\phi_n)$ is a bounded sequence in $H^1$ because of \eqref{condition:U2} and 
\[\sup_{\substack{n\in\mathbb{N}}}\lambda^1({D_{\phi_n}}^*D_{\phi_n})<m^2.\] 

We will now apply the concentration compactness method  to get the result. We follow the presentation of Lewin \cite{Lewin2010} based on \cite{Lieb} (see also \cite{Lions1984-1,Struwe2008}). Let us assume first that the sequence vanishes, then:
\[
\phi_n \rightarrow 0~\text{strongly in}~L^p~\text{for}~p\in(2,6),
\] 
and Proposition \ref{auxiliary_convergence} leads us to a contradiction. So, there exists a subsequence $(n_k)$, a sequence $(x_{n_k})\subset \mathbb{R}^3$ and $\phi_0 \in H^1\backslash \{0\}$ such that:
\[
\phi_{n_k}(~.-x_{n_k}) \rightharpoonup \phi _0~\text{weakly in}~H^1.
\]
We define $(\omega_{n_k})$ a sequence of $H^1$ such that $\|\omega_{n_k}\|_{L^2}=1$ and 
\[
	\|D_{\phi_{n_k}}\omega_{n_k}\|_{L^2} = \lambda^{1}_+(H_{\phi_{n_k}}).
\]
Up to extraction, there is $\omega_0\in H^1$ such that:
\[
\omega_{n_k}(~.-x_{n_k}) \rightharpoonup \omega_0~\text{weakly in}~H^1.
\]
Let $(R_k)$ be an increasing sequence of $\mathbb{R}^+$ such that $\lim_{\substack{k\rightarrow \infty}}R_k = \infty,$ then, up to a subsequence, there exists 
\[
(\phi_{1,k}), (\phi_{2,k}), (\omega_{1,k}), (\omega_{2,k}) \subset H^1,
\]
such that:
\begin{enumerate}
\item $\begin{array}{c} \|\omega_{n_k}-\omega_{1,k}-\omega_{2,k}\|_{H^1}\rightarrow 0,\\\|\phi_{n_k}-\phi_{1,k}-\phi_{2,k}\|_{H^1}\rightarrow 0,\end{array}$
\item $\begin{array}{c} \omega_{1,k}(~.-x_{n_k})\rightarrow \omega_0\\\phi_{1,k}(~.-x_{n_k})\rightarrow\phi_0\end{array}~\text{weakly in}~H^1,~\text{strongly in}~L^p~\text{for}~p\in[2,6),$
\item $\begin{array}{l}supp(\phi_{1,k})\cup supp(\omega_{1,k})\subset B(x_{n_k},R_k),\\supp(\phi_{2,k})\cup supp(\omega_{2,k})\subset \mathbb{R}^3\backslash B(x_{n_k},2R_k).\end{array}$
\end{enumerate}

We get:
\begin{eqnarray*}
	\lefteqn{I(1)=\underset{k\rightarrow+\infty}{\lim\inf}~ N\|D_{\phi_{n_k}}\omega_{n_k}\|_{L^2} +\int_{\mathbb{R}^3}\left(\frac{|\nabla \phi_{n_k}|^2}{2}+U(\phi_{n_k})\right)dx}\\
	&&\geq \underset{k\rightarrow+\infty}{\lim\inf}~N\left\{\|D_{\phi_{1,k}}\omega_{1,k}\|_{L^2}^2+\|D_{\phi_{2,k}}\omega_{2,k}\|_{L^2}^2\right\}^{1/2}\\
	&&\dots+\int_{\mathbb{R}^3}\left(\frac{|\nabla \phi_{1,k}|^2}{2}+U(\phi_{1,k})\right)dx+\int_{\mathbb{R}^3}\left(\frac{|\nabla \phi_{2,k}|^2}{2}+U(\phi_{2,k})\right)dx\\
	&&\geq \underset{k\rightarrow+\infty}{\lim\inf}~N\left\{\|\omega_0\|_{L^2}^2\lambda^1_+(H_{\phi_{0}})^2+(1-\|\omega_0\|_{L^2}^2)\lambda^1_+(H_{\phi_{2,k}})^2\right\}^{1/2}\\
	&&\dots+\int_{\mathbb{R}^3}\left(\frac{|\nabla \phi_{0}|^2}{2}+U(\phi_{0})\right)dx+\int_{\mathbb{R}^3}\left(\frac{|\nabla \phi_{2,k}|^2}{2}+U(\phi_{2,k})\right)dx\\
	&& \geq I(\|\omega_0\|_{L^2}^2).
\end{eqnarray*}

Since $\phi_0\ne 0,$ $\omega_0$ has to be non zero.  Assume now that $\|\omega_0\|_{L^2}\in(0,1).$ Lemma \ref{binding} ensures that :
\[
I(t)=I(1),~\text{for all }~t\in [0,1].
\] 
We must have:
\[
	\underset{k\rightarrow+\infty}{\lim}~ \lambda^1_+(H_{\phi_{2,k}}) = \lambda^1_+(H_{\phi_0}).
\]
If not, assume for instance that there exists a another subsequence such that:
\[
	\underset{k\rightarrow+\infty}{\lim}~ \lambda^1_+(H_{\phi_{2,k}}) > \lambda^1_+(H_{\phi_0}),
\]
then,
\begin{eqnarray*}
	\lefteqn{I(1)= \underset{k\rightarrow+\infty}{\lim\inf}~ \mathcal{E}(\phi_0,\phi_{2,k},\|\omega_0\|^2_{L^2})}\\
	&&>  \underset{k\rightarrow+\infty}{\lim\inf}~ \mathcal{E}(\phi_0,\phi_{2,k},1)\\
	&&\geq I(1).
\end{eqnarray*}
This is impossible. The same argument leads to a contradiction with:
\[
	\underset{k\rightarrow+\infty}{\lim}~ \lambda^1_+(H_{\phi_{2,k}}) < \lambda^1_+(H_{\phi_0}).
\]
Thus, we get:
\begin{eqnarray*}
	\lefteqn{I(1)=\underset{k\rightarrow+\infty}{\lim\inf}~ N\lambda^1_+(H_{\phi_{0}})}\\
	 &&\dots+\int_{\mathbb{R}^3}\left(\frac{|\nabla \phi_{0}|^2}{2}+U(\phi_{0})\right)dx+\int_{\mathbb{R}^3}\left(\frac{|\nabla \phi_{2,k}|^2}{2}+U(\phi_{2,k})\right)dx
\end{eqnarray*}
and
\[
	\underset{k\rightarrow+\infty}{\lim\inf}~\int_{\mathbb{R}^3}\left(\frac{|\nabla \phi_{2,k}|^2}{2}+U(\phi_{2,k})\right)dx=0.
\]
By Proposition \ref{auxiliary_convergence}, we get the contradiction: 
\[
	m=\underset{k\rightarrow+\infty}{\lim}~ \lambda^1_+(H_{\phi_{2,k}}) = \lambda^1_+(H_{\phi_0}).
\]
Thus, we have $\|\omega_0\|_{L^2}=1$ and 
\[
	\underset{k\rightarrow+\infty}{\lim\inf}~\int_{\mathbb{R}^3}\left(\frac{|\nabla \phi_{2,k}|^2}{2}+U(\phi_{2,k})\right)dx=0.
\]
The result follows.
\qed\end{proof}
\subsubsection{The bag approximation}
We follow exactly the same ideas. Let us introduce some notations:
\begin{eqnarray*}
	\lefteqn{\mathcal{F}(\chi_{\Omega_1},\chi_{\Omega_2}t) =N\left(t\lambda^1_+(H_{-\chi_{\Omega_1}})^2+(1-t)\lambda^1_+(H_{-\chi_{\Omega_2}})^2\right)^{1/2}}\\
	 &&\dots+aP(\Omega_1)+b|\Omega_1|+aP(\Omega_2)+b|\Omega_2|,
\end{eqnarray*}
for $\chi_{\Omega_1},\chi_{\Omega_2}\in BV(\mathbb{R}^3,\mathbb{R})$ and $t\in [0,1];$
\begin{eqnarray*}
	J(t)=\inf\{\mathcal{F}(\chi_{\Omega_1},\chi_{\Omega_2}t): \chi_{\Omega_1},\chi_{\Omega_2}\in BV(\mathbb{R}^3,\mathbb{R})\}.
\end{eqnarray*}
\begin{lemma}\label{binding2}
$J$ is concave, $J(0)=J(1),$
\[J(t)\leq Nm,~\text{for all}~t\in[0,1]\]
 and the concentration-compactness inequality 
\[J(t)\geq J(1),~\text{for all}~t\in[0,1]\]
is satisfied.
\end{lemma}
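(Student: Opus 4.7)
The plan is to mirror the proof of Lemma \ref{binding} almost verbatim, since the functional $\mathcal{F}$ has exactly the same block structure as $\mathcal{E}$: two independent $BV$ arguments coupled only through the weighted norm $\sqrt{t a^2+(1-t)b^2}$ of the two first eigenvalues, plus two uncoupled non-negative shape terms $aP(\Omega_i)+b|\Omega_i|$. I would proceed via four short steps corresponding to the four claims in the statement.

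First, for concavity, I would fix $\chi_{\Omega_1},\chi_{\Omega_2}\in BV$ and observe that the only $t$-dependent piece of $\mathcal{F}(\chi_{\Omega_1},\chi_{\Omega_2},t)$ is the map $t\mapsto N(t a^2+(1-t)b^2)^{1/2}$ with $a=\lambda^1_+(H_{-\chi_{\Omega_1}})$, $b=\lambda^1_+(H_{-\chi_{\Omega_2}})$, which is concave on $[0,1]$ as the composition of a concave square-root with an affine function. Hence $J$, being an infimum of concave functions, is concave.

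Second, for the upper bound $J(t)\le Nm$, I would test with $\chi_{\Omega_1}=\chi_{\Omega_2}=0$. Then $H_{-\chi_{\Omega_i}}=H_0$, so $\lambda^1_+(H_0)=m$ by Lemma \ref{lemma:selfadjoint}, and the shape terms vanish, yielding $\mathcal{F}(0,0,t)=N(tm^2+(1-t)m^2)^{1/2}=Nm$.

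Third, $J(0)=J(1)$ follows from the symmetry of $\mathcal{F}$: the substitution $(\chi_{\Omega_1},\chi_{\Omega_2})\mapsto(\chi_{\Omega_2},\chi_{\Omega_1})$ turns $\mathcal{F}(\cdot,\cdot,t)$ into $\mathcal{F}(\cdot,\cdot,1-t)$, so $J(t)=J(1-t)$. Finally, the concentration-compactness inequality $J(t)\ge J(1)$ on $[0,1]$ is an automatic consequence of the previous three facts: a concave function on $[0,1]$ with equal values at the two endpoints lies above that common value everywhere, so $J(t)\ge \min\{J(0),J(1)\}=J(1)$. There is no real obstacle here; the only point worth double-checking is that $\lambda^1_+(H_0)$ really equals $m$ in the sense of our definition (which it does, since $\sigma(H_0)=(-\infty,-m]\cup[m,+\infty)$ has no eigenvalue in $(-m,m)$ and the infimum in the Rayleigh-type definition is attained at the bottom of the essential spectrum).
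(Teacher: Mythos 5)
Your proof is correct and matches the paper's approach: the paper's own (very terse) proof of Lemma \ref{binding2} simply refers to Lemma \ref{binding}, whose proof consists of the observation that $I$ is concave as an infimum of concave functions plus the computation $\mathcal{E}(0,0,t)=Nm$, exactly the two ingredients you supply (together with the standard consequences that a concave function with equal endpoints dominates the common endpoint value). Your extra verification that $\lambda^1_+(H_0)=m$ and your explicit symmetry argument for $J(0)=J(1)$ are just spelled-out versions of what the paper leaves implicit.
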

\begin{proof-sketch} \smartqed
	The proof is similar to the one of Lemma \ref{binding}.
\qed\end{proof-sketch}
\begin{lemma}\label{lemma:bagapproxcc}
Let us assume that $J(1)<Nm,$ then for every minimizing sequence $(\chi_{\Omega_n})$ of 
\[
	J(1)=\inf\{N\lambda^{1}_+(H_{-\chi_\Omega})+aP(\Omega)+b|\Omega|: ~\chi_\Omega\in BV\},
\]
converges strongly in $BV$ to a minimum of \eqref{variationalformulationcavity} up to translation and extraction.
\end{lemma}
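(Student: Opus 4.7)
The plan is to adapt the concentration compactness argument from the proof of Lemma \ref{lemma:existencesolitonnonsymmetric} to the $BV$ setting, using Lemma \ref{binding2} as the binding inequality. Let $(\chi_{\Omega_n})$ be a minimizing sequence with
\[
N\lambda^{1}_+(H_{-\chi_{\Omega_n}})+aP(\Omega_n)+b|\Omega_n|<Nm
\]
for all large $n$. Since $a,b>0$, this immediately yields uniform bounds on $P(\Omega_n)$ and $|\Omega_n|$, hence on $\|\chi_{\Omega_n}\|_{BV}$; as $0\leq\chi_{\Omega_n}\leq 1$, the sequence is bounded in every $L^p$ with $p\in[1,\infty]$.

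First I would rule out vanishing. If $\chi_{\Omega_n}\to 0$ in $L^3\cap L^{3+\epsilon}$, then Proposition \ref{auxiliary_convergence} would force $\lambda^{1}_+(H_{-\chi_{\Omega_n}})\to\lambda^{1}_+(H_0)=m$, contradicting $J(1)<Nm$. Therefore there exist a subsequence (still indexed by $n_k$), translations $(x_{n_k})\subset\mathbb{R}^3$ and $\chi_{\Omega_0}\in BV$ with $|\Omega_0|>0$ such that $\chi_{\Omega_{n_k}}(\cdot-x_{n_k})$ converges to $\chi_{\Omega_0}$ weakly-$\ast$ in $BV$, in $L^{p}_{\text{loc}}$ for all finite $p$, and almost everywhere. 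In parallel, let $\omega_{n_k}\in H^1$ be normalized with $\|D_{-\chi_{\Omega_{n_k}}}\omega_{n_k}\|_{L^2}=\lambda^1_+(H_{-\chi_{\Omega_{n_k}}})$, and extract $\omega_{n_k}(\cdot-x_{n_k})\rightharpoonup\omega_0$ weakly in $H^1$.

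The core step is then the splitting. For a suitable increasing sequence $R_k\to\infty$, I set
\[
\Omega_{1,k}=\Omega_{n_k}\cap B(x_{n_k},R_k),\qquad \Omega_{2,k}=\Omega_{n_k}\setminus B(x_{n_k},2R_k),
\]
so that $\chi_{\Omega_{1,k}}$ and $\chi_{\Omega_{2,k}}$ are still characteristic functions with disjoint supports separated by an annulus. Using a coarea/Fubini slicing argument on the annulus $B(x_{n_k},2R_k)\setminus B(x_{n_k},R_k)$, one selects $R_k$ so that the extra perimeter contributions on the cutting spheres are $o(1)$, yielding the super-additivity
\[
P(\Omega_{n_k})\geq P(\Omega_{1,k})+P(\Omega_{2,k})+o(1),\qquad |\Omega_{n_k}|=|\Omega_{1,k}|+|\Omega_{2,k}|.
\]
A parallel cut-off of $\omega_{n_k}$ into $\omega_{1,k},\omega_{2,k}$ supported in $B(x_{n_k},R_k)$ and its complement $B(x_{n_k},2R_k)^c$ respectively, together with Lemma \ref{auxiliary_scalarfieldlimit}, gives
\[
\lambda^1_+(H_{-\chi_{\Omega_{n_k}}})^2\geq \|D_{-\chi_{\Omega_{1,k}}}\omega_{1,k}\|_{L^2}^2+\|D_{-\chi_{\Omega_{2,k}}}\omega_{2,k}\|_{L^2}^2+o(1).
\]
Passing to the liminf, using $\omega_{1,k}(\cdot-x_{n_k})\rightharpoonup\omega_0$ with $\|\omega_0\|_{L^2}^2=:t\in(0,1]$, the lower semi-continuity of $\omega\mapsto\|D_{-\chi_{\Omega_0}}\omega\|_{L^2}^2$ on $H^1$, the $L^1$-lower-semicontinuity of the perimeter and Proposition \ref{auxiliary_convergence}, one gets
\[
J(1)\geq \liminf_{k\to\infty}\mathcal{F}(\chi_{\Omega_0},\chi_{\Omega_{2,k}},t)\geq J(t).
\]
By the binding inequality of Lemma \ref{binding2}, $J(t)\geq J(1)$ for all $t\in[0,1]$, so equality must hold. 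Arguing as in the soliton case, if $t<1$ then $\lambda^1_+(H_{-\chi_{\Omega_{2,k}}})\to\lambda^1_+(H_{-\chi_{\Omega_0}})$ and $aP(\Omega_{2,k})+b|\Omega_{2,k}|\to 0$; by Proposition \ref{auxiliary_convergence} applied to $\chi_{\Omega_{2,k}}\to 0$, this forces $\lambda^1_+(H_{-\chi_{\Omega_0}})=m$, contradicting $J(1)<Nm$. Hence $t=1$, and simultaneously $aP(\Omega_{2,k})+b|\Omega_{2,k}|\to 0$, so $\chi_{\Omega_{2,k}}\to 0$ strongly in $L^1$. Combined with the weak-$\ast$ $BV$ convergence of the central piece and the lower-semicontinuity inequalities saturating, one concludes $\chi_{\Omega_{n_k}}(\cdot-x_{n_k})\to\chi_{\Omega_0}$ strictly in $BV$, hence strongly in $L^1$; the limit $\chi_{\Omega_0}$ is still a characteristic function and realizes $J(1)$.

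The main obstacle, as compared to Lemma \ref{lemma:existencesolitonnonsymmetric}, is the $BV$ splitting step: perimeter is not additive under set decomposition and one cannot simply multiply a $BV$ function by a smooth cut-off without leaving the class of characteristic functions. The coarea/slicing selection of $R_k$ on an annulus, guaranteeing negligible boundary contribution from the cut spheres, is the delicate point. Everything else is a verbatim adaptation of the soliton argument, with Lemma \ref{binding2} replacing Lemma \ref{binding}.
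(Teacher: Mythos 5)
Your proof is correct, and the overall architecture --- rule out vanishing via Proposition \ref{auxiliary_convergence}, split on an annulus, pass to the liminf, invoke the concavity-based binding inequality of Lemma \ref{binding2}, and force $t=1$ --- mirrors the soliton argument exactly, which is precisely what the paper's one-line sketch defers to. Where you depart from the paper is in the single technical step you correctly identify as delicate: controlling the extra perimeter created by cutting $\Omega_{n_k}$ across spheres. You do this by a coarea/Fubini slicing argument, selecting a radius in the annulus $\{R_k<|x-x_{n_k}|<2R_k\}$ where $\mathcal{H}^2(\Omega_{n_k}\cap\partial B(x_{n_k},r))$ is $O(1/R_k)$, since the integral over $r$ of this slice measure is at most $|\Omega_{n_k}|$. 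The paper instead adapts the Levy concentration-function device to $BV$ (Proposition \ref{prop:BVcc} in the appendix), producing radii where \emph{both} $\int_{\text{annulus}}|u_n|$ and $|\nabla u_n|(\text{annulus})$ vanish, and then bounds the trace on a fixed intermediate sphere by the trace inequality from Ambrosio--Fusco--Pallara (Theorem 3.86), as is done explicitly in the proof of Lemma \ref{lemma:gamma2}. Your coarea route is more elementary and exploits that $\chi_{\Omega_{n_k}}$ is a characteristic function (so the sphere contribution to perimeter is exactly the $\mathcal{H}^2$ slice measure); the paper's trace-inequality route is slightly heavier but applies uniformly to non-indicator $BV$ fields, which is why the paper needs it anyway for Lemma \ref{lemma:gamma2}. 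Both approaches are sound. One small slip: your line $|\Omega_{n_k}|=|\Omega_{1,k}|+|\Omega_{2,k}|$ should be $\geq$, since the middle annulus is discarded; this is harmless because $\geq$ is all the argument requires.
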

\begin{proof-sketch} \smartqed
The proof is similar to the one of the soliton case. For the reader's convenience, we give in the appendix the straightforward adaptation of the presentation of the concentration compactness method of Lewin \cite{Lewin2010} to the $BV$ setting. 
\qed\end{proof-sketch}
\subsection{Euler-Lagrange equations}
As in the symmetric case, it remains to show that the minimizer satisfies the Euler-Lagrange equations.
\begin{lemma}\label{eulerlagrangesoliton}
Let $\phi\in H^1(\mathbb{R}^3,\mathbb{R})$ and $\omega\in H^1(\mathbb{R}^3,\mathbb{C}^2)$ be such that $\|\omega\|_{L^2}=1$,
\[
I(1) = N\|D_{\phi}\omega\|_{L^2} +\int_{\mathbb{R}^3}\left(\frac{|\nabla\phi|^2}{2}+U(\phi)\right)dx,
\]
and $\|D_{\phi}\omega\|_{L^2}>0$ then,
\[
	\begin{array}{ll}
		-\Delta\phi+U'(\phi)+Ng\psi^*\beta\psi=0,
	\end{array}
\]
where 
\[
\psi = (U_{FW}T)^*\left[\begin{array}{c}\omega\\0\end{array}\right]
\]
is an normalized eigenvector of $H_{\phi}$ associated with the smallest positive eigenvalue $\lambda = \|D_{\phi}\omega\|_{L^2}$.
\end{lemma}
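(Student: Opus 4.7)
The plan is to combine the variational characterisation of $\omega$ as a minimizer of $\|D_\phi\cdot\|_{L^2}$ on the $L^2$-unit sphere with an envelope-type argument for the outer variation in $\phi$, and then to translate the resulting weak equation back from the Foldy--Wouthuysen picture to the Dirac picture.

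First I would use the hypothesis to pin down the structure of $(\phi,\omega)$. By the min-max formula of Theorem~\ref{supersymmetry_foldy}, $\|D_\phi\omega\|_{L^2}\geq\lambda^1_+(H_\phi)$ and $I(1)\leq N\lambda^1_+(H_\phi)+\int_{\mathbb{R}^3}\bigl(\tfrac{|\nabla\phi|^2}{2}+U(\phi)\bigr)dx$, so the saturation in the hypothesis forces $\lambda:=\|D_\phi\omega\|_{L^2}=\lambda^1_+(H_\phi)>0$ and $\phi$ itself minimizes the full functional. Then $\omega$ attains the infimum in the min-max formula for $\lambda^1(D_\phi^*D_\phi)$, and the Lagrange-multiplier rule gives $D_\phi^*D_\phi\omega=\lambda^2\omega$. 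Lemma~\ref{0eigen} rules out $0\in\sigma(H_\phi)$, and the identity $U_{FW}Q_\phi U_{FW}^*=\beta|Q_\phi|$ of Theorem~\ref{supersymmetry_foldy} shows that $\psi$ is a normalized eigenvector of $H_\phi$ for the eigenvalue $\lambda$.

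Next, for an arbitrary test direction $\phi'\in H^1(\mathbb{R}^3,\mathbb{R})$, I would introduce
\[
h(t):=N\|D_{\phi+t\phi'}\omega\|_{L^2}+\int_{\mathbb{R}^3}\left(\frac{|\nabla(\phi+t\phi')|^2}{2}+U(\phi+t\phi')\right)dx,
\]
which, by $\|D_{\phi+t\phi'}\omega\|_{L^2}\geq\lambda^1_+(H_{\phi+t\phi'})$ and the minimality of $I(1)$, satisfies $h(t)\geq I(1)=h(0)$ and so has a minimum at $t=0$. Because $\lambda>0$, the outer square root stays smooth near $0$, and~\eqref{condition:U0} makes the potential term $\mathcal{C}^1$, so $h$ is differentiable at $0$ with $h'(0)=0$. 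Expanding $\|D_{\phi+t\phi'}\omega\|_{L^2}^2$ via identity~\eqref{eq:identity1} and using $\Im(\omega^*D_\phi\omega)=(m+g\phi)|\omega|^2-\Re(\omega^*(\boldsymbol{\sigma}\cdot\nabla\omega))$, the stationarity reads in the distributional sense
\[
-\Delta\phi+U'(\phi)+\frac{Ng}{\lambda}\Im(\omega^*D_\phi\omega)=0.
\]

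It remains to identify $\Im(\omega^*D_\phi\omega)/\lambda$ with $\psi^*\beta\psi$. A direct algebraic computation from the explicit form $U_{FW}^*=\tfrac{1}{\sqrt{2}}(I-\beta\,\text{sgn}\,Q_\phi)$ followed by an application of $T^*$ gives $\psi^*\beta\psi=\Im(\omega^*S\omega)$ pointwise. Since $\omega$ is an eigenfunction of $D_\phi^*D_\phi$ at $\lambda^2$, $D_\phi\omega$ is an eigenfunction of $D_\phi D_\phi^*$ at the same eigenvalue; hence $S\omega=(D_\phi D_\phi^*)^{-1/2}D_\phi\omega=D_\phi\omega/\lambda$, and substitution yields $\psi^*\beta\psi=\Im(\omega^*D_\phi\omega)/\lambda$, closing the argument. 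The main obstacle I anticipate is the absence of a simplicity statement for $\lambda^1_+(H_\phi)$ in the non-symmetric case, which would otherwise let one differentiate $t\mapsto\lambda^1_+(H_{\phi+t\phi'})$ by Kato--Rellich perturbation theory as in Lemma~\ref{eulerlagrange}; the envelope inequality $h\geq g\geq h(0)$ sidesteps this at the cost of requiring the outer square root to be smooth, which is exactly why the standing hypothesis $\|D_\phi\omega\|_{L^2}>0$ is needed.
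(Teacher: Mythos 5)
Your argument is correct and follows the same approach as the paper's proof: outer variation in $\phi$ (the paper states the resulting Euler--Lagrange equation directly; you supply the envelope argument $h(t)\ge h(0)$, which indeed circumvents the need for simplicity of $\lambda^1_+(H_\phi)$), followed by the same algebraic identification $\psi^*\beta\psi=\Im(\omega^*S\omega)=\Im(\omega^*D_\phi\omega)/\lambda$ via $T\beta T^*$ and $S\omega=D_\phi\omega/\lambda$. Your parenthetical appeal to Lemma~\ref{0eigen} to ``rule out $0\in\sigma(H_\phi)$'' should be phrased as a consequence of $\lambda^1(D_\phi^*D_\phi)=\lambda^2>0$ combined with that lemma, but the logic is sound.
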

\begin{remark}
If $\|D_{\phi}\omega\|_{L^2}=0$ then $\phi$ satisfies an Euler-Lagrange inequation. 
\end{remark}
\begin{proof}\smartqed
We have:
\[
	\begin{array}{ll}
		-\Delta\phi+U'(\phi)+\frac{Ng}{\lambda}\mathcal{R}e\left[\omega^*(-iD_{\phi}\omega)\right]=0,
	\end{array}
\]
and
\begin{align*}
	(\psi,\beta\psi) 	&= \left(U_{FW}^*\left[\begin{array}{c}\omega\\0\end{array}\right]\right)^*T\beta T^*\left(U_{FW}^*\left[\begin{array}{c}\omega\\0\end{array}\right]\right) \\
				&= \frac{1}{2}\left[\begin{array}{c}\omega\\S\omega\end{array}\right]^*\left(\begin{array}{cc}0&-i\\i&0\end{array}\right)\left[\begin{array}{c}\omega\\S\omega\end{array}\right]\\
				&=\frac{1}{\lambda}\mathcal{R}e\left[\omega^*(-iD_{\phi}\omega)\right]. \\
\end{align*}
\qed\end{proof}

The next lemma shows that the minimizers of \eqref{variationalformulationcavity} also satisfy Euler-Lagrange equations.
\begin{lemma}
Assume that $g\in(0,m)$. Let $\chi_\Omega\in BV(\mathbb{R}^3,\mathbb{R})$ and  $\omega\in H^1(\mathbb{R}^3,\mathbb{C}^2)$ be such that $\|\omega\|_{L^2}=1$ and
\[
J(1) = N\|D_{-\chi_\Omega}\omega\|_{L^2} +aP(\Omega)+b|\Omega|,
\]
then,
\[
	\begin{array}{ll}
		a\mathcal{H}_\Omega+b-Ng\psi^*\beta\psi=0,~\text{on}~\partial \Omega
	\end{array}
\]
where 
\[
\psi = (U_{FW}T)^*\left[\begin{array}{c}\omega\\0\end{array}\right]
\]
is an normalized eigenvector of $H_{-\chi_\Omega}$ associated with the smallest positive eigenvalue $\lambda = \|D_{-\chi_\Omega}\omega\|_{L^2}$.
\end{lemma}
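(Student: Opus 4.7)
The plan is to follow the strategy of Lemma \ref{eulerlagrangesoliton} but in the shape-optimization (BV) setting. I pick a compactly supported smooth vector field $X\in C_c^\infty(\mathbb{R}^3,\mathbb{R}^3)$, consider the diffeomorphisms $\Phi_t=\mathrm{id}+tX$ for $|t|$ small, and set $\Omega_t=\Phi_t(\Omega)$. Since $\chi_\Omega$ achieves $J(1)$ and the competitor $\chi_{\Omega_t}$ is admissible for both positive and negative $t$, minimality yields
\[
\frac{d}{dt}\bigl[N\lambda^1_+(H_{-\chi_{\Omega_t}})+aP(\Omega_t)+b|\Omega_t|\bigr]_{t=0}=0.
\]

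The geometric contributions are handled by the classical Hadamard-type shape-derivative formulas (see Pierre \cite{Pierre2005} and Giusti \cite{Giusti1984}):
\[
\frac{d}{dt}\bigl[aP(\Omega_t)+b|\Omega_t|\bigr]_{t=0}=\int_{\partial^*\Omega}(a\mathcal{H}_\Omega+b)(X\cdot n)\,d\mathcal{H}^2,
\]
where $n$ is the measure-theoretic outer normal. For the spectral term, the assumption $g\in(0,m)$ together with Lemma \ref{supersymmetry_kernel} gives $0\notin\sigma(H_{-\chi_{\Omega_t}})$, so $\lambda^1_+(H_{-\chi_{\Omega_t}})\in(0,m)$ is an isolated eigenvalue below the essential spectrum, and Kato-Rellich analytic perturbation theory applies as in Lemma \ref{eulerlagrangesoliton}. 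Writing $\lambda(t)=(\psi_t,H_0\psi_t)-g\int_{\Omega_t}\psi_t^*\beta\psi_t\,dx$ with $\|\psi_t\|_{L^2}=1$ for an analytic branch of normalised eigenvectors, the normalisation kills the $L^2$-inner product of $\partial_t\psi_t|_{t=0}$ with $\psi$, and a Hadamard formula for the moving domain gives
\[
\frac{d\lambda}{dt}(0)=2\mathcal{R}e(\partial_t\psi_t|_{t=0},H_{-\chi_\Omega}\psi)-g\int_{\partial^*\Omega}\psi^*\beta\psi\,(X\cdot n)\,d\mathcal{H}^2=-g\int_{\partial^*\Omega}\psi^*\beta\psi\,(X\cdot n)\,d\mathcal{H}^2.
\]
Combining the three derivatives and invoking the arbitrariness of $X$ yields $a\mathcal{H}_\Omega+b-Ng\psi^*\beta\psi=0$ on $\partial^*\Omega$. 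The identification of $\psi$ with $(U_{FW}T)^*\bigl[\begin{smallmatrix}\omega\\0\end{smallmatrix}\bigr]$ and the rewriting $(\psi,\beta\psi)=\lambda^{-1}\mathcal{R}e[\omega^*(-iD_{-\chi_\Omega}\omega)]$ are performed exactly as at the end of Lemma \ref{eulerlagrangesoliton}.

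The hard part will be two related subtleties. First, the Kato-Rellich argument implicitly uses simplicity of $\lambda^1_+(H_{-\chi_\Omega})$, which, unlike in Lemma \ref{simplicity}, is not automatic in the non-symmetric case; I would bypass this via the supersymmetric splitting of Theorem \ref{supersymmetry_foldy}, reducing the perturbation analysis to the lowest eigenvalue of $D^*_{-\chi_\Omega}D_{-\chi_\Omega}$ and invoking the general analytic perturbation theorem for isolated eigenvalues of finite multiplicity, whose analytic branches all share the same first-order shape derivative along the distinguished direction $\omega$. Second, the mean curvature $\mathcal{H}_\Omega$ must be given a meaning on $\partial^*\Omega$: this is achieved through the partial regularity theory for quasi-minimisers of the perimeter (see \cite{ambrosio2000functions}), applicable because the spectral term adds only a lower-order perturbation to $aP(\Omega)+b|\Omega|$.
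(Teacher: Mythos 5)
The algebraic identification $(\psi,\beta\psi)=\lambda^{-1}\mathcal{R}e[\omega^*(-iD_{-\chi_\Omega}\omega)]$ and the geometric shape-derivative formulas for $P(\Omega)$ and $|\Omega|$ are exactly as in the paper. The problem lies in how you differentiate the spectral term. You invoke Kato--Rellich analytic perturbation theory for the family $t\mapsto H_{-\chi_{\Omega_t}}$ with $\Omega_t=(\mathrm{id}+tX)(\Omega)$, but this family is \emph{not} analytic, nor even Lipschitz, in $t$: the perturbation is $-g\beta(\chi_{\Omega_t}-\chi_\Omega)$ and $\|\chi_{\Omega_t}-\chi_\Omega\|_{L^3}\sim |\Omega_t\triangle\Omega|^{1/3}\sim |t|^{1/3}$, so the hypothesis of the Kato--Rellich theorem used in Lemma~\ref{eulerlagrange} (where the perturbation $tg\beta\phi'$ is linear in $t$) simply fails here. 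Consequently the existence of an analytic branch $t\mapsto(\lambda(t),\psi_t)$ is not justified, and the formula $\frac{d\lambda}{dt}(0)=-g\int_{\partial^*\Omega}\psi^*\beta\psi\,(X\cdot n)\,d\mathcal{H}^2$ is asserted rather than proved. Your ``fix'' via the supersymmetric reduction to $D^*_{-\chi_\Omega}D_{-\chi_\Omega}$ does not repair this, since that operator family inherits the same lack of regularity in $t$.

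The paper sidesteps the spectral perturbation entirely by an envelope argument, which is what ``the arguments of the proof of the previous lemma'' refers to. Since $\lambda^1_+(H_{-\chi_{\Omega'}})\leq\|D_{-\chi_{\Omega'}}\omega\|_{L^2}$ for every competitor $\Omega'$ with equality at $\Omega'=\Omega$, the map
\[
\Omega'\longmapsto N\|D_{-\chi_{\Omega'}}\omega\|_{L^2}+aP(\Omega')+b|\Omega'|
\]
with $\omega$ \emph{held fixed} dominates the minimized functional and is itself minimized at $\Omega$. Its shape derivative only involves domain derivatives of ordinary volume integrals of $\omega$ (a Leibniz/Hadamard formula, no spectral theory at all), together with the perimeter and volume derivatives you already wrote, and directly yields
$a\mathcal{H}+b-\frac{Ng}{\lambda}\mathcal{R}e[\omega^*(-iD_{-\chi_\Omega}\omega)]=0$ on $\partial^*\Omega$. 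The identity for $(\psi,\beta\psi)$ then finishes, exactly as at the end of Lemma~\ref{eulerlagrangesoliton}. If you want to salvage your route you would have to transport the eigenvalue problem back to the fixed domain via $\Phi_t$ so the $t$-dependence sits in smooth coefficients, but this is a substantially longer detour that the envelope argument makes unnecessary.
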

\begin{proof-sketch}\smartqed
We have:
\[
	a\mathcal{H}+b-\frac{Ng}{\lambda}\mathcal{R}e\left[\omega^*(-iD_{\phi}\omega)\right]=0,~\text{on}~\partial^* \Omega.
\]
The arguments of the proof of the previous lemma give the result.
\qed\end{proof-sketch}

This ends the proofs of theorems  \ref{maintheorem2} and \ref{maintheorem4}.

\section{Gamma convergence results}
We give here the proof of Proposition \ref{gammalimit} based on \cite{Modica1987,Braides1998,braides2006,Sternberg}:
\begin{proof}\smartqed
	Let $(\epsilon_n)$ be a decreasing sequence converging to $0$ and $(\phi_n)$ be such that:
	\[
		\left\{\begin{array}{ll}
			\underset{n\rightarrow+\infty}{\lim}\phi_n=\phi&\text{in}~L^p~\text{for all}~ p\in[2,\frac{3(q+2)}{4}],\\
			\underset{n\rightarrow+\infty}{\lim}E_{\epsilon_n}(\phi_n)&\text{exists and is finite.}
		\end{array}\right.
	\]
	Up to extraction, we can assume that $(\phi_n)$ tends to $\phi$ almost everywhere. $(\phi_n)\subset H^1$ is a bounded sequence in $L^2$ and
	\[
		\underset{n\rightarrow+\infty}{\lim\inf}\int_{\mathbb{R}^3}W(\phi_n)dx=\int_{\mathbb{R}^3}W(\phi)dx=0.
	\]
	So, there exists a subset $\Omega$ of $\mathbb{R}^3$ such that $\phi=-\chi_\Omega$ and $|\Omega|=\|\phi\|_{L^2}^2<+\infty.$  Moreover, we have for all $n$ by Cauchy-Schwarz inequality:
	\[
		\int_{\mathbb{R}^3}\left(\epsilon_n|\nabla \phi_n|^2+W(\phi_n)/\epsilon_n\right)dx\geq\int_{\mathbb{R}^3}2|\nabla\phi_n|\sqrt{W(\phi_n)}dx=|\nabla (\mathcal{W}\circ\phi_n)|(\mathbb{R}^3),
	\]
	where $\mathcal{W}(t)=2\int_0^t\sqrt{W(s)}ds$ and $|\nabla w|(\mathcal{A})$ denotes the variation of $w\in L^1$ on the Borel set $\mathcal{A}$. Since, there is $C>0$ such that:
	\[
		\mathcal{W}(t)\leq C(|t|^2+|t|^{\frac{q+2}{2}})~\forall t,
	\]
	$(\mathcal{W}\circ\phi_n)$ is bounded in $BV,$ converges to $\mathcal{W}\circ\phi$ in $L^p$ for all $p\in[1,3/2].$ Thus, we get:
	\[
		\left\{\begin{array}{ll}
			\mathcal{W}\circ\phi = a\chi_{\Omega}\in BV,~\phi=-\chi_\Omega\in BV,\\
			\underset{n\rightarrow+\infty}{\lim\inf}|\nabla (\mathcal{W}\circ\phi_n)|(\mathbb{R}^3) \geq |\nabla (\mathcal{W}\circ\phi)|(\mathbb{R}^3)=aP(\Omega),
		\end{array}\right.
	\]
	so,
	\[
		\underset{n\rightarrow+\infty}{\lim\inf}~E_{\epsilon_n}(\phi_n) \geq E_{0}(\phi)
	\]		
	and
	\[
		\left(\Gamma-\underset{\epsilon\rightarrow0}{\lim\inf}~E_\epsilon\right)(\phi)\geq E_{0}(\phi).
	\]
	It remains to construct recovering sequences. For $R>0$ and every $\tilde{\Omega}\subset \subset B(0,R)$ such that $\chi_{\tilde{\Omega}}\in BV,$ Sternberg \cite{Sternberg} constructs  a sequence $(\phi_\epsilon)\subset H^1_0(B(0,R))$ such that:
	\[
		\left\{\begin{array}{l}
			(\phi_\epsilon)~\text{converges to}~ -\chi_{\tilde{\Omega}}~\text{in}~L^1(B(0,R)),\\
			\|\phi_\epsilon\|_{L^\infty}\leq 1~\text{for all}~ \epsilon>0,\\
			\underset{\epsilon\rightarrow 0}{\lim\sup}\int_{B(0,R)}\left(\epsilon|\nabla \phi_\epsilon|^2+W(\phi_\epsilon)/\epsilon\right)dx= aP(\tilde{\Omega}).
		\end{array}\right.
	\] 
	thus, 
	\[
		\left(\Gamma-\underset{\epsilon\rightarrow 0}{\lim\sup}~E_{\epsilon}\right)(-\chi_{\tilde{\Omega}})\leq E_0(-\chi_{\tilde{\Omega}}).
	\]
	For every $\chi_\Omega\in BV,$ we have 
	\begin{eqnarray*}
		(\chi_{\Omega\cap B(0,R)})_{R>0}\subset BV \text{~ tends to ~}\chi_\Omega \text{~in~} L^p \text{~for all~} p\in[1,+\infty),\\
		(|\nabla \chi_{\Omega\cap B(0,R)}|(\mathbb{R}^3))\text{~ tends to ~}|\nabla \chi_{\Omega}|(\mathbb{R}^3),
	\end{eqnarray*}
	so, 
	\[
		\underset{R\rightarrow +\infty}{\lim}~E_0(-\chi_{\Omega\cap B(0,R)})= E_0(-\chi_{\Omega}). 
	\]
	Since the $\Gamma-$limit-sup is lower semi-continuous, we obtain:
	\[
		\begin{array}{ll}
			\left(\Gamma-\underset{\epsilon\rightarrow 0}{\lim\sup}~E_{\epsilon}\right)(-\chi_\Omega)	&\leq\underset{R\rightarrow+\infty}{\lim\inf}~\left(\Gamma-\underset{\epsilon\rightarrow 0}{\lim\sup}~E_{\epsilon}(-\chi_{\Omega\cap B(0,R)})\right)\\
																			&\leq \underset{R\rightarrow+\infty}{\lim\inf}~E_{0}(-\chi_{\Omega\cap B(0,R)})\\
																			&\leq E_0(-\chi_\Omega).
		\end{array}
	\]
\qed\end{proof}
The following lemmas are part of the proof of Theorem \ref{gammalimit2}. We introduce for $\epsilon>0$:
		\[
			\mathcal{Z}_\epsilon = \{\phi\in L^2(\mathbb{R}^3,\mathbb{R})\cap L^{\frac{3(q+2)}{4}}(\mathbb{R}^3,\mathbb{R}):~\int_{\mathbb{R}^3}\frac{W(\phi)}{\epsilon}\leq Nm\},
		\]
		\[
			G_\epsilon(\phi) = \left\{%
			\begin{array}{ll}
				|\nabla \mathcal{W}\circ\phi|(\mathbb{R}^3)+b\|\phi\|^2_{L^2} 	&\text{if $\phi\in \mathcal{Z}_\epsilon$,}\\
				+\infty													&\text{otherwise,}
			\end{array}
			\right.
		\]
		and for $t\in[0,1], \phi_1\in\{-\chi_\Omega\in BV\},~\phi_2\in\mathcal{Z}_\epsilon,$
\begin{eqnarray*}
	\mathcal{F}_{\epsilon}(\phi_1,\phi_2,t)=N\left\{t\lambda^1_+(H_{\phi_1})^2+(1-t)\lambda^1_+(H_{\phi_2})^2\right\}^{1/2}+E_0(\phi_1)+G_\epsilon(\phi_2)\\
	J_\epsilon(t) = \inf\{\mathcal{F}_{\epsilon}(\phi_1,\phi_2,t):~\phi_1\in\{-\chi_\Omega\in BV\},~\phi_2\in\mathcal{Z}_\epsilon\}
\end{eqnarray*}

\begin{lemma}\label{gammabinding}
	We have for all $\epsilon>0,$ that $J_\epsilon$ is concave, continuous and
	\begin{eqnarray}
		\label{eq2:gammabinding}	0\leq J_\epsilon(t)\leq Nm
	\end{eqnarray}
	for all $t\in[0,1].$
	There exists a concave function $J_0$ such that $(J_{\epsilon})$ tends to $J_0$  pointwise in $[0,1]$ as $\epsilon$ tends to $0$ and
	\[
		0\leq J_0(0)\leq J_0(t),
	\]
	for all $t\in[0,1].$
\end{lemma}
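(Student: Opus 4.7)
The plan is to establish the properties of $J_\epsilon$ directly from its structure as an infimum of a concave-in-$t$ family, obtain pointwise convergence to $J_0$ from a monotonicity of the admissible sets, and derive the binding inequality from concavity together with a symmetry between the two endpoints.

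Concavity of each map $t \mapsto N\sqrt{t\lambda^1_+(H_{\phi_1})^2 + (1-t)\lambda^1_+(H_{\phi_2})^2}$ follows since $\sqrt{\cdot}$ is concave nondecreasing and the radicand is affine in $t$; adding the $t$-independent terms $E_0(\phi_1) + G_\epsilon(\phi_2)$ preserves concavity, and an infimum of concave functions is concave. Non-negativity of $J_\epsilon$ is immediate, while $J_\epsilon(t) \leq Nm$ is obtained via the test pair $\phi_1=\phi_2=0$, using $\lambda^1_+(H_0) = m$ and $E_0(0) = G_\epsilon(0) = 0$. Concavity already gives continuity on $(0,1)$; at the endpoints, the $\liminf$ inequality $\liminf_{t\to t_0} J_\epsilon(t) \geq J_\epsilon(t_0)$ follows from concavity applied to the chord joining $t$ with a fixed interior point, and the matching $\limsup$ is obtained by testing with a near-minimizer of $J_\epsilon(t_0)$ and using continuity of $t \mapsto \mathcal{F}_\epsilon(\phi_1,\phi_2,t)$.

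For the pointwise limit, I would invoke the crucial monotonicity $\mathcal{Z}_{\epsilon'} \subset \mathcal{Z}_\epsilon$ when $\epsilon' < \epsilon$ (the constraint $\int W(\phi)/\epsilon \leq Nm$ becomes more restrictive as $\epsilon$ shrinks), which forces $J_\epsilon(t)$ to be nondecreasing as $\epsilon \to 0$; together with the $Nm$ upper bound this produces a pointwise limit $J_0:[0,1]\to [0,Nm]$, and concavity passes to the pointwise limit termwise. For the binding inequality $J_0(0)\leq J_0(t)$, the key observation is that $J_\epsilon(1) = \inf_{\phi_1}\{N\lambda^1_+(H_{\phi_1}) + E_0(\phi_1)\}$ is independent of $\epsilon$ (minimize by setting $\phi_2=0$ so $G_\epsilon(\phi_2)=0$), while $J_\epsilon(0) \leq J_\epsilon(1)$ for every $\epsilon$ (any $-\chi_\Omega \in BV$ lies in $\mathcal{Z}_\epsilon$ with $G_\epsilon(-\chi_\Omega) = E_0(-\chi_\Omega)$, so the infimum at $t=0$ may be taken over the BV class); showing $J_\epsilon(0) \to J_\epsilon(1)$ yields $J_0(0)=J_0(1)$, after which concavity of $J_0$ gives $J_0(t) \geq t J_0(1) + (1-t) J_0(0) = J_0(0)$ for all $t \in [0,1]$.

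The main obstacle is precisely this convergence $J_\epsilon(0) \to J_\epsilon(1)$ as $\epsilon \to 0$. A near-minimizing sequence $(\phi_{2,\epsilon})\subset \mathcal{Z}_\epsilon$ of $\inf_{\phi_2}\{N\lambda^1_+(H_{\phi_2}) + G_\epsilon(\phi_2)\}$ yields only the following bounds: $\mathcal{W}\circ\phi_{2,\epsilon}$ is bounded in $BV(\mathbb{R}^3)$, $\phi_{2,\epsilon}$ is bounded in $L^2$, and $\int W(\phi_{2,\epsilon}) \to 0$. Since the problem is set on the unbounded domain $\mathbb{R}^3$, this is insufficient for subsequential convergence, so one must apply the concentration-compactness method in the $BV$ setting (as in the appendix supporting Lemma~\ref{lemma:bagapproxcc}) to eliminate vanishing and dichotomy and recover pre-compactness up to translations. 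Once a subsequential limit of the form $-\chi_\Omega \in BV$ is extracted, Proposition~\ref{auxiliary_convergence} controls the lower semicontinuity of $\lambda^1_+(H_{\phi})$ under $L^3\cap L^{3+\epsilon}$-convergence, and the $\Gamma$-liminf inequality established in Proposition~\ref{gammalimit} gives $\liminf G_\epsilon(\phi_{2,\epsilon}) \geq E_0(-\chi_\Omega)$; together they show the limit point is admissible for the $\epsilon=0$ problem with energy no larger than $\lim J_\epsilon(0)$, forcing $\lim J_\epsilon(0) \geq J_\epsilon(1)$ and completing the proof.
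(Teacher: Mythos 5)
Your proof of the first several assertions — concavity as an infimum of concave functions, the bounds via the test pair $\phi_1=\phi_2=0$, continuity from concavity plus upper semicontinuity of an infimum, monotonicity $\mathcal{Z}_{\epsilon'}\subset\mathcal{Z}_\epsilon$ for $\epsilon'<\epsilon$ giving pointwise convergence to a limit $J_0$, and concavity passing to the pointwise limit — is exactly right and matches the paper's argument.

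The final paragraph, however, manufactures a difficulty that isn't there. You do not need $J_0(0)=J_0(1)$ to prove $J_0(0)\leq J_0(t)$; everything you need is already in the preceding sentence. You observed that $J_\epsilon(1)=\inf\{N\lambda^1_+(H_{-\chi_\Omega})+E_0(-\chi_\Omega):\chi_\Omega\in BV\}$ is independent of $\epsilon$ (take $\phi_2=0$), and that $J_\epsilon(0)\leq J_\epsilon(1)$ for every $\epsilon$ (take $\phi_1=0$, and note that every $-\chi_\Omega$ lies in $\mathcal{Z}_\epsilon$ with $G_\epsilon(-\chi_\Omega)=aP(\Omega)+b|\Omega|=E_0(-\chi_\Omega)$). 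Passing to the limit gives $J_0(0)\leq J_0(1)$. Concavity of $J_0$ then yields, for all $t\in[0,1]$,
\[
J_0(t)\ \geq\ t\,J_0(1)+(1-t)\,J_0(0)\ \geq\ t\,J_0(0)+(1-t)\,J_0(0)\ =\ J_0(0),
\]
which is the binding inequality. The equality $J_0(0)=J_0(1)$ is a strictly stronger statement; in the paper it is the first assertion of Lemma~\ref{lemma:gamma2}, and it is there that the concentration-compactness argument you sketch (bounds on $\mathcal{W}\circ\phi_{2,\epsilon}$ in $BV$, exclusion of vanishing and dichotomy, Proposition~\ref{auxiliary_convergence}, the $\Gamma$-liminf) properly belongs. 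Importing all of that machinery into the present lemma only to obtain an inequality that already follows from one line of concavity is not wrong, but it badly misreads what the lemma requires.
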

\begin{proof}\smartqed 
	The same argument as in Lemma \ref{binding} gives us inequality \eqref{eq2:gammabinding}. $J_\epsilon$ is concave and continuous as an infimum of concave and continuous functions. $(J_\epsilon)$ is a non-increasing sequence since $(\mathcal{Z}_\epsilon)$ is non-decreasing sequence of sets. Hence, $(J_\epsilon)$ converges point-wise to a concave function $J_0$ in $[0,1]$ as $\epsilon$ tends to $0$. The remaining follows immediately.
\qed\end{proof}

The core of the proof of Theorem \ref{gammalimit2} is given by the following lemma. We use here the concentration compactness method and the $\Gamma$-convergence theory.
\begin{lemma}\label{lemma:gamma2}
We have:
\[
	J_0(0) = J_0(1).
\]
If $J_0(0)<Nm$ and for all $n,$ there is $\phi_n\in \mathcal{Z}_{\epsilon_n}$ such that:
\begin{eqnarray*}
	\underset{n\rightarrow+\infty}{\lim}N\lambda^1_+(H_{\phi_n})+G_{\epsilon_n}(\phi_n) = J_0(0),
\end{eqnarray*}
where $(\epsilon_n)$ is a sequence which tends to $0$, then, up to a subsequence, up to translation,
	\[
		\left\{\begin{array}{l}
			\mathcal{W}\circ\phi_n\rightarrow \mathcal{W}\circ(-\chi_\Omega)~\text{strictly in}~BV\\
			\phi_n\rightarrow-\chi_\Omega~\text{strongly in}~L^p~\text{for}~p\in[2,\frac{3(q+2)}{4}]
		\end{array}\right.
	\]
	where $\chi_\Omega\in BV.$

\end{lemma}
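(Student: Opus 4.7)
The strategy is to deduce both claims from a concentration-compactness analysis of a minimizing sequence for $J_0(0)$. First observe the easy inequality $J_0(0) \leq J_0(1)$: any $-\chi_\Omega \in BV$ lies in $\mathcal{Z}_\epsilon$ (since $W\circ(-\chi_\Omega)=0$ a.e.) with $G_\epsilon(-\chi_\Omega)=aP(\Omega)+b|\Omega|=E_0(-\chi_\Omega)$, so the pair $(0,-\chi_\Omega)$ in $\mathcal{F}_\epsilon(\cdot,\cdot,0)$ yields $J_\epsilon(0) \leq l_c$, while choosing $\phi_2=0$ in $\mathcal{F}_\epsilon(\cdot,\cdot,1)$ shows $J_\epsilon(1)=l_c$ is independent of $\epsilon$. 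If $J_0(0)=Nm$ then also $J_0(1)=Nm$ by Lemma \ref{gammabinding}; so assume $J_0(0)<Nm$.

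Pick $(\phi_n) \subset \mathcal{Z}_{\epsilon_n}$ with $N\lambda^1_+(H_{\phi_n}) + G_{\epsilon_n}(\phi_n) \to J_0(0)$. The bound on $G_{\epsilon_n}(\phi_n)$ controls $(\mathcal{W}\circ\phi_n)$ in $BV$ and $(\phi_n)$ in $L^2$; combined with $W(t)\geq c|t|^q$ outside $(t_1,t_2)$ and $\int W(\phi_n)\leq Nm\,\epsilon_n \to 0$, this gives boundedness in $L^{3(q+2)/4}$ and $\int W(\phi_n) \to 0$. Let $\omega_n \in H^1(\mathbb{R}^3,\mathbb{C}^2)$ be normalized with $\|D_{\phi_n}\omega_n\|_{L^2} = \lambda^1_+(H_{\phi_n}) < m$; Lemma \ref{auxiliary_uniformcoecive} bounds $(\omega_n)$ in $H^1$. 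Apply Lions' concentration-compactness trichotomy as in Lemma \ref{lemma:existencesolitonnonsymmetric}.

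Vanishing is excluded: if $\phi_n \to 0$ in $L^p$ for $p \in (2, 3(q+2)/4]$, Lemma \ref{auxiliary_scalarfieldlimit} together with the spectral gap $\sigma(H_0) \cap (-m,m) = \emptyset$ force $\lambda^1_+(H_{\phi_n}) \to m$, contradicting $J_0(0)<Nm$. In the dichotomy case one obtains decompositions $\phi_n = \phi_{1,n} + \phi_{2,n}$, $\omega_n = \omega_{1,n} + \omega_{2,n}$ with supports in $B(x_n, R_n)$ and in $\mathbb{R}^3 \setminus B(x_n, 2R_n)$, $R_n \to \infty$; after translation $\phi_{1,n}(\cdot - x_n) \to -\chi_{\Omega_1}$ in $L^p$ and a.e.\ (the limit is a characteristic function because $\int W(\phi_{1,\infty}) = 0$). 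Setting $t^* = \lim \|\omega_{1,n}\|_{L^2}^2$, Proposition \ref{auxiliary_convergence} yields $\lambda^1_+(H_{\phi_{1,n}}) \to \lambda^1_+(H_{-\chi_{\Omega_1}})$, and the Modica-Mortola lower bound from Proposition \ref{gammalimit} gives $\liminf G_{\epsilon_n}(\phi_{1,n}) \geq E_0(-\chi_{\Omega_1})$; splitting via disjoint supports,
\[
J_0(0) \geq N\sqrt{t^*\lambda^1_+(H_{-\chi_{\Omega_1}})^2 + (1-t^*)\liminf\lambda^1_+(H_{\phi_{2,n}})^2} + E_0(-\chi_{\Omega_1}) + \liminf G_{\epsilon_n}(\phi_{2,n}) \geq \liminf J_{\epsilon_n}(t^*) = J_0(t^*),
\]
and the binding inequality $J_0(t^*) \geq J_0(0)$ of Lemma \ref{gammabinding} forces all equalities. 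Adapting the argument of Lemma \ref{lemma:existencesolitonnonsymmetric}, $t^* \in (0,1)$ then yields $\lim \lambda^1_+(H_{\phi_{2,n}}) = \lambda^1_+(H_{-\chi_{\Omega_1}})$ and $G_{\epsilon_n}(\phi_{2,n}) \to 0$; the latter forces $\phi_{2,n} \to 0$ in $L^3$, so $\lambda^1_+(H_{\phi_{2,n}}) \to m$, hence $\lambda^1_+(H_{-\chi_{\Omega_1}}) = m$ and $J_0(0) \geq Nm$, a contradiction.

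Thus the compactness case holds: there exist $(x_n)$ with $\phi_n(\cdot - x_n) \to -\chi_\Omega$ strongly in $L^p$ for $p \in [2, 3(q+2)/4)$ and a.e., with $-\chi_\Omega \in BV$. Proposition \ref{auxiliary_convergence} gives $\lambda^1_+(H_{\phi_n}) \to \lambda^1_+(H_{-\chi_\Omega})$ and lower semi-continuity yields $\liminf G_{\epsilon_n}(\phi_n) \geq E_0(-\chi_\Omega)$. Therefore $J_0(0) \geq N\lambda^1_+(H_{-\chi_\Omega}) + E_0(-\chi_\Omega) \geq l_c = J_0(1)$, closing the chain so that $J_0(0) = J_0(1) = l_c$ and $-\chi_\Omega$ minimizes \eqref{lc}. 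Equality of $\liminf$ and limit in $G_{\epsilon_n}(\phi_n)$ forces strict $BV$-convergence $\mathcal{W}\circ\phi_n \to \mathcal{W}\circ(-\chi_\Omega)$ and, combined with boundedness in $L^{3(q+2)/4}$, upgrades the pointwise convergence to strong $L^p$ convergence on the full range $[2, 3(q+2)/4]$. The principal difficulty is the dichotomy analysis, where one must match the hybrid admissibility class of $\mathcal{F}_{\epsilon_n}$ (with $\phi_1 \in BV$ but $\phi_2 \in \mathcal{Z}_{\epsilon_n}$) to the decomposed sequence and simultaneously invoke the $\Gamma$-convergence lower bound for $G_{\epsilon_n}(\phi_{1,n})$ and the binding inequality to identify $J_{\epsilon_n}(t^*)$ as $\epsilon_n \to 0$.
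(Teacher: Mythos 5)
Your overall strategy coincides with the paper's: exclude vanishing via Proposition~\ref{auxiliary_convergence}, apply a concentration–compactness decomposition, use the binding inequality of Lemma~\ref{gammabinding} to preclude dichotomy, and read off strict $BV$-convergence from equality in the energy chain. There is however a genuine gap in the dichotomy step, precisely where you locate ``the principal difficulty.'' You write $\phi_n = \phi_{1,n} + \phi_{2,n}$ with $\mathrm{supp}\,\phi_{1,n}\subset B(x_n,R_n)$ and $\mathrm{supp}\,\phi_{2,n}\subset \mathbb{R}^3\setminus B(x_n,2R_n)$, i.e.\ with an empty annulus between the two pieces. This forces $\phi_n$ to vanish on $B(x_n,2R_n)\setminus B(x_n,R_n)$, which is false in general; the $\phi_n$ only lie in $\mathcal{Z}_{\epsilon_n}$ and have no reason to vanish there. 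The paper instead splits the field by a sharp cutoff at an intermediate radius, $\phi_{1,n}=\phi_n\chi_{B(x_n,3R_n/2)}$, $\phi_{2,n}=\phi_n\chi_{B(x_n,3R_n/2)^c}$, so the two pieces are genuinely complementary, and then must control the jump terms that this truncation creates in $|\nabla(\mathcal{W}\circ\phi_{i,n})|(\mathbb{R}^3)$. This control is obtained via the trace estimate (Theorems 3.84 and 3.86 of Ambrosio--Fusco--Pallara) combined with the concentration property
\[
\int_{R_n<|x|<2R_n}\bigl(|\mathcal{W}\circ\phi_n(\cdot-x_n)|+|\nabla\mathcal{W}\circ\phi_n(\cdot-x_n)|\bigr)\,dx\rightarrow 0,
\]
which makes both the annulus contribution and the trace terms vanish asymptotically. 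Without this, the superadditivity inequality $G_{\epsilon_n}(\phi_n)\geq G_{\epsilon_n}(\phi_{1,n})+G_{\epsilon_n}(\phi_{2,n})-o(1)$ that you are implicitly invoking in ``splitting via disjoint supports'' is unjustified; it is exactly the technical heart of the dichotomy analysis and the reason the paper develops a $BV$ version of the concentration--compactness method in the appendix.

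A secondary, more cosmetic inaccuracy: the inequality $\liminf G_{\epsilon_n}(\phi_{1,n})\geq E_0(-\chi_{\Omega_1})$ does not come from the Modica--Mortola lower bound of Proposition~\ref{gammalimit} (that result concerns the full functional $E_\epsilon$, not $G_\epsilon$). It follows directly from lower semicontinuity of the total variation and of the $L^2$ norm, once one has $\mathcal{W}\circ\phi_{1,n}(\cdot-x_n)\to\mathcal{W}\circ(-\chi_{\Omega_1})$ in $L^1$. This does not affect the validity of the step, but the reference is misdirected. Apart from these two points, the remainder of your argument — the exclusion of vanishing, the identification $J_0(0)\geq J_0(t^*)\geq J_0(0)$ forcing equalities, the $G_{\epsilon_n}(\phi_{2,n})\to 0$ contradiction when $t^*\in(0,1)$, and the upgrade to strict $BV$ and strong $L^p$ convergence — matches the paper's proof.
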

\begin{proof}\smartqed
	If $J_0(0)=Nm,$ then, $J_0(0)= J_0(1)$ by Lemma \ref{gammabinding}. Thus, we can assume that $J_0(0)<Nm.$ Let $\phi_n\in\mathcal{Z}_{\epsilon_n}$ be such that:
	\begin{eqnarray*}
		\underset{n\rightarrow+\infty}{\lim}~N\lambda^1_+(H_{\phi_n})+G_{\epsilon_n}(\phi_n) = J_0(0).
	\end{eqnarray*}
	We can assume that:
	\begin{eqnarray*}
		\underset{n}{\sup}~N\lambda^1_+(H_{\phi_n})+G_{\epsilon_n}(\phi_n) <Nm.
	\end{eqnarray*}
	As in the proof of \ref{gammalimit}, $(\phi_n)$ is uniformly bounded in $L^2$ and 
	\[
		\underset{n}{\sup}~|\nabla \mathcal{W}\circ\phi_n|(\mathbb{R}^3)<Nm.
	\]
	By Sobolev embedding, $(\mathcal{W}\circ\phi_n)$ is a bounded sequence of $L^{3/2}$. Since there is a positive constant $c>0$ such that:
	\[
		\mathcal{W}(t)\geq c|t|^{\frac{q+2}{2}},
	\] 
	for all $t$, $(\phi_n)$ is bounded in $L^{\frac{3(q+2)}{4}}$ and by the interpolation inequalities in $L^p$ for all $p\in[2,\frac{3(q+2)}{4}].$ We get that $(\mathcal{W}\circ\phi_n)$ is uniformly bounded in $BV.$ 	
	
	Let us assume now that this sequence vanishes. Then, $(\mathcal{W}\circ\phi_n)$ tends to $0$ in $L^p$ for all $p\in(1,3/2)$, so $(\phi_n)$ tends to $0$ in $L^p$ for all $p\in(2,\frac{3(q+2)}{4}).$ Proposition \ref{auxiliary_convergence} contradicts
	\[
		J_0(0)<Nm.
	\]
	Thus, up to a subsequence, there exist  $(x_n)\subset \mathbb{R}^3$ and $\tilde{\mathcal{W}}\in BV\backslash\{0\}$ such that $(\mathcal{W}\circ\phi_n(~.~-x_n))$ tends to  $\tilde{\mathcal{W}}$ in $L^1_{loc}.$ Since for all $n$, $\phi_n$ belongs to $\mathcal{Z}_{\epsilon_n},$ there exists $\chi_\Omega$ such that $(\phi_n(~.~-x_n))$ tends to $-\chi_\Omega$ almost everywhere, up to another subsequence and $\tilde{\mathcal{W}} = \mathcal{W}\circ(-\chi_\Omega)=-a\chi_\Omega\in BV$. 
	
	For all $n,$ there exists moreover $\omega_n\in H^1$ such that $\|\omega_n\|_{L^2}=1$ and 
	\[
		\|D_{\phi_n(~.~-x_n)}\omega_n\|_{L^2}=\lambda^1_+(H_{\phi_n}).
	\]
 	 By Lemma \ref{auxiliary_uniformcoecive}, $(\omega_n)$ is uniformly bounded in $H^1.$ Up to a subsequence, $(\omega_n)$ tends to $\omega\in H^1$ in $H^1$ weakly.
	
	Let $0<R_k $ be a sequence such that $(R_k)$ tends to $+\infty$. Then, by concentration compactness, there exist:
	\[
		(\omega_{1,n}), (\omega_{2,n}) \subset H^1,
	\]
 	such that, up to a subsequence,
	\[
		\left\{\begin{array}{l}
			\|\omega_{n}-\omega_{1,n}-\omega_{2,n}\|_{H^1}\rightarrow 0,\\
			\omega_{1,n}\rightarrow \omega~\text{weakly in}~H^1,~\text{strongly in}~L^p~\text{for}~p\in[2,6),\\
			supp(\omega_{1,n})\subset B(0,R_n),\\ 
			supp(\omega_{2,n})\subset \mathbb{R}^3\backslash B(0,2R_n)
		\end{array}\right.
	\]
	and
	\begin{equation}\label{eqn:ccchamp}
		\left\{\begin{array}{l}
			\chi_{B(0,R_n)}(\mathcal{W}\circ\phi_n(~.~-x_n))~\text{ tends to }~ \mathcal{W}\circ(-\chi_\Omega)~ \text{in}~ L^p~\text{ if}~ 1\leq p<3/2,\\ 
			\int_{R_n<|x|<2R_n}(|\mathcal{W}\circ\phi_n(~.~-x_n)|+|\nabla \mathcal{W}\circ\phi_n(~.~-x_n)|)dx~\text{ tends to}~ 0.
		\end{array}\right.
	\end{equation}
	  
	We localize now the $\phi$ field. Let us define for all $n$
	\[
		\phi_{1,n} = \phi_n\chi_{B(x_n,\frac{3R_n}{2})}, ~\phi_{2,n} = \phi_n\chi_{B(x_n,\frac{3R_n}{2})^c}\in \mathcal{Z}_{\epsilon_n}.
	\]
	Then, following the same notation of theorem $3.84$ of \cite{ambrosio2000functions}, we have that $\phi_{i,n}$ belongs to $BV(\mathbb{R}^3,\mathbb{R})$ for $i\in\{1,2\}$ and 
	\begin{eqnarray*}
		|\nabla \mathcal{W}\circ \phi_{1,n}|(\mathbb{R}^3) = |\nabla \mathcal{W}\circ \phi_{n}|\left(B(x_n,\frac{3R_n}{2})\right)+\int_{\partial B(x_n,\frac{3R_n}{2})}|(\mathcal{W}\circ \phi_{n})^+|ds,
	\end{eqnarray*}
	\begin{eqnarray*}
		|\nabla \mathcal{W}\circ \phi_{2,n}|(\mathbb{R}^3) = |\nabla \mathcal{W}\circ \phi_{n}|\left(B(x_n,\frac{3R_n}{2})^c\right)+\int_{\partial B(x_n,\frac{3R_n}{2})}|(\mathcal{W}\circ \phi_{n})^-|ds.
	\end{eqnarray*}
	Theorem $3.86$ of \cite{ambrosio2000functions} ensures moreover that there exists a constant $c>0$ such that for all $w\in BV(\mathcal{A}):$
	\[
		\int_{\partial B(0,\frac{3}{2})}|w^\pm|ds\leq c\left(\|w\|_{L^1(\mathcal{A})}+|\nabla w|(\mathcal{A})\right),
	\]
	where $\mathcal{A} = B(0,2)\backslash B(0,1).$
	By a rescaling argument, we get that for all $R>1,$ for all $w\in BV(\mathcal{A}_R):$
	\begin{eqnarray*}
		\int_{\partial B(0,\frac{3R}{2})}|w^\pm|ds&\leq c\left(\|w\|_{L^1(\mathcal{A}_R)}/R+|\nabla w|(\mathcal{A}_R)\right)\\
		&\leq c\left(\|w\|_{L^1(\mathcal{A}_R)}+|\nabla w|(\mathcal{A}_R)\right),
	\end{eqnarray*}
	where $\mathcal{A}_R = B(0,2R)\backslash B(0,R).$
	
	We obtain thanks to equations \eqref{eqn:ccchamp}:
	\begin{eqnarray*}
		J_0(0)= \lefteqn{\underset{n\rightarrow+\infty}{\lim}~N\|D_{\phi_n(~.~-x_n)}\omega_n\|_{L^2}	+G_{\epsilon_n}(\phi_n)}\\
		&&\geq \underset{n\rightarrow+\infty}{\lim\inf}~N\left\{\|D_{\phi_{1,n}(~.~-x_n)}\omega_n^1\|_{L^2(\mathbb{R}^3)}^2+\|D_{\phi_{2,n}(~.~-x_n)}\omega_n^2\|_{L^2(\mathbb{R}^3)}^2\right\}^{1/2}\\
		&&+G_{\epsilon_n}(\phi_{1,n})+G_{\epsilon_n}(\phi_{2,n})\\
		&&\geq \underset{n\rightarrow+\infty}{\lim\inf}~N\left\{\|\omega\|^2_{L^2}\lambda^1_+(H_{-\chi_\Omega})^2 +(1-\|\omega\|^2_{L^2})\lambda^1_+(H_{\phi_{2,n}})^2\right\}^{1/2}\\
		&&+E_{0}(-\chi_\Omega)+G_{\epsilon_n}(\phi_{2,n})\\
		&& \geq J_0(\|\omega\|^2_{L^2}).
	\end{eqnarray*}

	This imposes $\|\omega\|_{L^2}>0,$ otherwise, $J_0(0)\geq c+J_0(0)$ with $c>0.$ If $\|\omega\|_{L^2}\in (0,1),$ then we have by Lemma \ref{gammabinding}:
	\[
		J_0(t) = J_0(0),
	\]
	for all $t\in[0,1].$ As in the proof of Lemma \ref{lemma:existencesolitonnonsymmetric}, we have:
	\[
		\underset{n\rightarrow+\infty}{\lim}~\lambda^1_+(H_{\phi_{2,n}}) = \lambda^1_+(H_{-\chi_\Omega})
	\]
	and we must have:
	\[
		\underset{n\rightarrow+\infty}{\lim\inf}~G_{\epsilon_n}(\phi_{2,n})=0,
	\]
	so, we get the contradiction:
	\[
		\underset{n\rightarrow+\infty}{\lim}~\lambda^1_+(H_{\phi_{2,n}})=m=\lambda^1_+(H_{-\chi_\Omega})<m.
	\]

	Thus, we obtain that $\|\omega\|_{L^2}=1$ and
	\[
		\underset{n\rightarrow+\infty}{\lim\inf}~G_{\epsilon_n}(\phi_{2,n})=0,
	\]
	so that,
	\[
		\left\{\begin{array}{l}
			\mathcal{W}\circ\left[\phi_n(~.~-x_n)\right]\rightarrow \mathcal{W}\circ(-\chi_\Omega)~\text{strictly in}~BV\\
			\phi_n(~.~-x_n)\rightarrow-\chi_\Omega~\text{strongly in}~L^p~\text{for}~p\in[2,\frac{3(q+2)}{4}]
		\end{array}\right.
	\]	
	and
	\[
		J_0(0)=J_0(1) = N\lambda^1_+(H_{-\chi_\Omega})+E_{0}(-\chi_\Omega).
	\]
\qed\end{proof}

Let us write the proof of Theorem \ref{gammalimit2} which follows from Proposition \ref{gammalimit} and the previous lemmas.
\begin{proof}\smartqed
	The first part of the theorem follows from Proposition \ref{auxiliary_convergence} and the fact that the $\Gamma$-convergence remains true if we add continuous functions.
	
	We assume next that:
	\[
		J_0(1)<Nm.
	\]
	Lemma \ref{lemma:bagapproxcc} ensures that there exists $-\chi_\Omega\in BV$ such that:
	\[
		N\lambda^1_+(H_{-\chi_\Omega}) + E_0(-\chi_\Omega) = J_0(1).
	\]
	By Proposition \ref{gammalimit}, there is a sequence $(\phi_\epsilon)\subset H^1$ such that:
	\[\left\{\begin{array}{l}
		\underset{\epsilon\rightarrow 0 }{\lim\sup} ~E_\epsilon(\phi_\epsilon)\leq E_0(-\chi_\Omega),\\
		\phi_\epsilon\rightarrow -\chi_\Omega,~\text{in} ~L^2\cap L^{\frac{3(q+2)}{4}}.
	\end{array}\right.\]
	Thus, we get:	
	\[
		\underset{\epsilon\rightarrow 0 }{\lim\sup}~ l^\epsilon_s\leq \underset{\epsilon\rightarrow 0 }{\lim\sup}~N\lambda^1_+(H_{\phi_\epsilon})+E_\epsilon(\phi_\epsilon)\leq J_0(1)<Nm.
	\]
	There exists $\epsilon_0>0$ such that for all $0<\epsilon<\epsilon_0,$
	\[
		l_s^\epsilon< Nm,
	\]
	and by Lemma \ref{lemma:existencesolitonnonsymmetric}, there is $\phi_\epsilon\in H^1$ such that:
	\[
	 	\mathcal{E}_\epsilon(\phi_\epsilon)=l_s^\epsilon.
	\]

	We have:
	\begin{eqnarray*}
		\lefteqn{J_0(1)\geq \underset{\epsilon\rightarrow 0 }{\lim\sup}~l^\epsilon_s,}\\
		&&\geq \underset{\epsilon\rightarrow 0 }{\lim\sup}~N\lambda^1_+(H_{\phi_\epsilon})+G_\epsilon(\phi_\epsilon),\\
		&&\geq \underset{\epsilon\rightarrow 0 }{\lim\sup}~J_\epsilon(0)=J_0(1),
	\end{eqnarray*}
	and Lemma \ref{lemma:gamma2} concludes the proof.
\qed\end{proof}

The proof of Corollary \ref{cor:solitongroundgeneral} follows immediately from Theorem \ref{gammalimit2} and Proposition \ref{auxiliary_convergence}.
We write now the proof of Theorem \ref{gammalimit3}.
\begin{proof}\smartqed
	Just as in the proof of Theorem \ref{gammalimit2}, the $\Gamma-$ convergence follows from Proposition \ref{auxiliary_convergence} and 
	\[
		\underset{\epsilon\rightarrow0}{\lim\sup}~l_s^\epsilon(k_1,\dots,k_N)\leq\underset{\epsilon\rightarrow0}{\lim\sup}~l_s^\epsilon(K,\dots,K)\leq l_c(K,\dots,K)<Nm.
	\]
	Lemma \ref{lemma:existencesolitonnonsymmetric} ensures that there exists $\epsilon_0>0$ such that for all $0<\epsilon<\epsilon_0,$ problem \eqref{lsepsilonex} has a minimum $\phi_\epsilon\in BV_{rad}.$ We get that $(\mathcal{W}\circ\phi_\epsilon)$ is bounded in $BV$ and $(\phi_\epsilon)$ in $L^2.$ So by Proposition \ref{prop:BVcompactness}, there exists a subsequence $(\epsilon_n)$ and $\phi=-\chi_\Omega\in BV$ such that:
	\[
		\left\{\begin{array}{ll}
			\mathcal{W}\circ\phi_n\rightarrow \mathcal{W}\circ\phi~\text{strongly in}~L^p~\text{for all}~p\in(1,3/2)~\text{and}~a.a,\\
			\phi_n\rightarrow \phi~\text{strongly in}~L^p~\text{for all}~p\in(2,\frac{3(q+2)}{4}).
		\end{array}\right.
	\]
	By Proposition \ref{auxiliary_convergence}, we have
	\[
		\begin{array}{lll}
			\underset{n\rightarrow}{\lim\inf}	&\mathcal{E}_{\epsilon_n,k_1,\dots,k_N}(\phi_n)\\
										&\geq\sum_{i=1}^N\lambda^{k_i}_+(H_\phi)			&+\underset{n\rightarrow+\infty}{\lim\inf}~E_{\epsilon_n}(\phi_n)\\
										&\geq \mathcal{E}_{0,k_1,\dots,k_N}(\phi).
		\end{array}
	\]
	Thus, we get the conclusion of the theorem.
	
\qed\end{proof}

\section{The M.I.T. bag limit}
We give here the proofs of Theorem \ref{theo:MITlimit} and Proposition \ref{prop:MITgroundstate}.

\begin{lemma}\label{lemma:estimationMIT}
	Let $\chi_\Omega\in BV(\mathbb{R}^3,\mathbb{R})$ and $0<m<M.$ We have for $\omega\in H^1(\mathbb{R}^3,\mathbb{C}^2)$:
	\begin{eqnarray*}
		\lefteqn{\|-\boldsymbol{\sigma}.\nabla \omega+(m\chi_\Omega+M\chi_{\Omega^c})\omega\|^2_{L^2(\mathbb{R}^3)} = }\\
		&&\|\boldsymbol{\sigma}.\nabla \omega\|^2_{L^2(\mathbb{R}^3)}+m^2\|\omega\|^2_{L^2(\Omega)}+M^2\|\omega\|^2_{L^2(\Omega^c)}+(M-m)\int_{\partial\Omega}\omega^*(\boldsymbol{\sigma}.n)\omega dz\\
	\end{eqnarray*}
	There is $C>0$ such that:
	\begin{eqnarray*}
		\lefteqn{C\|-\boldsymbol{\sigma}.\nabla \omega+(m\chi_\Omega+M\chi_{\Omega^c})\omega\|^2_{L^2(\mathbb{R}^3)} \geq}\\
		&& \frac{1}{M}\|\nabla \omega\|^2_{L^2(\mathbb{R}^3)}+\|\omega\|^2_{L^2(\Omega)}+M\|\omega\|^2_{L^2(\Omega^c)}+\|\boldsymbol{\sigma}.\nabla \omega\|^2_{L^2(\Omega)},\\
	\end{eqnarray*}
	$C$ depends neither on $\Omega$ nor on $\omega.$
\end{lemma}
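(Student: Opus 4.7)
The plan is to deduce the equality by a direct expansion plus a distributional Leibniz rule, then obtain the inequality by controlling the sign-indefinite boundary term with a carefully weighted Young inequality. For the equality, I would first write the pointwise identity $|-\boldsymbol{\sigma}.\nabla\omega+V\omega|^{2}=|\boldsymbol{\sigma}.\nabla\omega|^{2}+V^{2}|\omega|^{2}-2\mathcal{R}e[(\boldsymbol{\sigma}.\nabla\omega)^{*}V\omega]$ with $V:=m\chi_{\Omega}+M\chi_{\Omega^{c}}$; the first two pieces integrate directly to $\|\boldsymbol{\sigma}.\nabla\omega\|^{2}_{L^{2}(\mathbb{R}^{3})}$ and $m^{2}\|\omega\|^{2}_{L^{2}(\Omega)}+M^{2}\|\omega\|^{2}_{L^{2}(\Omega^{c})}$. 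For the cross term, the Hermiticity of each $\sigma_{k}$ gives $2\mathcal{R}e[(\boldsymbol{\sigma}.\nabla\omega)^{*}\omega]=\sum_{k}\partial_{k}(\omega^{*}\sigma_{k}\omega)=\mathrm{div}(\omega^{*}\boldsymbol{\sigma}\omega)$, and H\"older combined with the Sobolev embedding $\omega\in H^{1}\subset L^{6}$ places $F:=\omega^{*}\boldsymbol{\sigma}\omega$ in $W^{1,3/2}$. Writing $V=M-(M-m)\chi_{\Omega}$ and combining $\int\mathrm{div}F\,dx=0$ with the $BV$ formula $\int\chi_{\Omega}\,\mathrm{div}F\,dx=\int_{\partial^{*}\Omega}F\cdot n\,d\mathcal{H}^{2}$ (where $n$ is the exterior unit normal to $\Omega$ on its reduced boundary) yields $\int V\,\mathrm{div}F\,dx=-(M-m)\int_{\partial^{*}\Omega}\omega^{*}(\boldsymbol{\sigma}.n)\omega\,d\mathcal{H}^{2}$, which supplies the claimed boundary contribution.

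For the lower bound, the key difficulty is that this boundary term is sign-indefinite. Since $\int_{\mathbb{R}^{3}}\mathrm{div}F\,dx=0$, the same distributional calculation performed on $\Omega^{c}$ instead of $\Omega$ produces $\int_{\partial^{*}\Omega}\omega^{*}(\boldsymbol{\sigma}.n)\omega\,d\mathcal{H}^{2}=-2\mathcal{R}e\int_{\Omega^{c}}(\boldsymbol{\sigma}.\nabla\omega)^{*}\omega\,dx$. Cauchy-Schwarz followed by Young's inequality with weight $\alpha=1/M$ then gives $(M-m)\bigl|\int_{\partial^{*}\Omega}\omega^{*}(\boldsymbol{\sigma}.n)\omega\,d\mathcal{H}^{2}\bigr|\leq \frac{M-m}{M}\|\boldsymbol{\sigma}.\nabla\omega\|^{2}_{L^{2}(\Omega^{c})}+M(M-m)\|\omega\|^{2}_{L^{2}(\Omega^{c})}$. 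The precise choice $\alpha=1/M$ is dictated by the target inequality: both excess contributions stay below the coefficients available on the left-hand side, leaving the residuals $m/M$ in front of $\|\boldsymbol{\sigma}.\nabla\omega\|^{2}_{L^{2}(\Omega^{c})}$ and $Mm$ in front of $\|\omega\|^{2}_{L^{2}(\Omega^{c})}$, with nothing spent inside $\Omega$.

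Inserting this estimate into the equality yields $\|-\boldsymbol{\sigma}.\nabla\omega+V\omega\|^{2}_{L^{2}(\mathbb{R}^{3})}\geq \|\boldsymbol{\sigma}.\nabla\omega\|^{2}_{L^{2}(\Omega)}+(m/M)\|\boldsymbol{\sigma}.\nabla\omega\|^{2}_{L^{2}(\Omega^{c})}+m^{2}\|\omega\|^{2}_{L^{2}(\Omega)}+Mm\|\omega\|^{2}_{L^{2}(\Omega^{c})}$. Invoking the Clifford-type identity $\|\boldsymbol{\sigma}.\nabla\omega\|^{2}_{L^{2}(\mathbb{R}^{3})}=\|\nabla\omega\|^{2}_{L^{2}(\mathbb{R}^{3})}$, which follows from $\sigma_{i}\sigma_{j}+\sigma_{j}\sigma_{i}=2\delta_{ij}I_{2}$ and from the density of $C_{c}^{\infty}$ in $H^{1}$, one can split $(1/M)\|\nabla\omega\|^{2}_{L^{2}(\mathbb{R}^{3})}$ into a contribution on $\Omega$ and one on $\Omega^{c}$; comparing the four terms coefficient by coefficient, the constant $C:=\max(2,\,1/m,\,1/m^{2})$ is admissible, and in particular is uniform in $\Omega$ and in $M\geq m$.

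I expect the main obstacle to lie in the rigorous justification of the two distributional integration-by-parts formulae for the $BV$ indicator $\chi_{\Omega}$ paired with the $W^{1,3/2}$ vector field $F=\omega^{*}\boldsymbol{\sigma}\omega$, and in particular in the proper identification of the boundary trace $\omega^{*}(\boldsymbol{\sigma}.n)\omega$ on the reduced boundary $\partial^{*}\Omega$; this relies on the $BV$ trace theory used repeatedly in the paper (cf.\ Ambrosio, Fusco and Pallara), possibly together with an approximation argument of $\chi_{\Omega}$ by smooth sets. A secondary quantitative point is that the constant $C$ must remain uniformly bounded as $M\to+\infty$ so that this lemma can be used inside the M.I.T.\ bag limit of Theorem~\ref{theo:MITlimit}, and the weight $\alpha=1/M$ in Young's inequality is precisely what delivers that uniformity.
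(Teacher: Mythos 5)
Your approach is essentially the same as the paper's: expand the $L^2$ square, identify the boundary term via the divergence identity $2\mathcal{R}e[(\boldsymbol{\sigma}.\nabla\omega)^*\omega]=\mathrm{div}(\omega^*\boldsymbol{\sigma}\omega)$, and then control that sign-indefinite term by expanding a weighted square $\|-c_1\boldsymbol{\sigma}.\nabla\omega+c_2\omega\|^2_{L^2(\Omega^c)}\geq 0$ on the exterior — your Cauchy--Schwarz plus Young with weight $\alpha=1/M$ is exactly the paper's choice $c_1,c_2$ in disguise, and the global Clifford identity $\|\boldsymbol{\sigma}.\nabla\omega\|_{L^2(\mathbb{R}^3)}=\|\nabla\omega\|_{L^2(\mathbb{R}^3)}$ enters in both. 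The one slip is arithmetic: after your Young step the coefficient of $\|\boldsymbol{\sigma}.\nabla\omega\|^2_{L^2(\Omega)}$ on the right-hand side is $1+1/M$, which is not dominated by $\max(2,1/m,1/m^2)$ when $(\sqrt{5}-1)/2<m<1$; replacing your $C$ by, say, $\max\bigl(1+1/m,\,1/m^2\bigr)$ repairs it, and the essential point — that $C$ can be chosen uniformly in $\Omega$, $\omega$ and in $M>m$ — is preserved.
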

\begin{proof}\smartqed
	Let $c_1, c_2>0,$ we have: 
	\[
		\begin{array}{ll}
		\|-c_1\boldsymbol{\sigma}.\nabla \omega+c_2\omega\|^2_{L^2(\Omega^c)} &= c_1^2\|\boldsymbol{\sigma}.\nabla \omega\|^2_{L^2(\Omega^c)}+c^2_2\|\omega\|^2_{L^2(\Omega^c)}\\
		&\dots-c_1c_2\int_{\partial\Omega}\omega^*(\boldsymbol{\sigma}.n)\omega dz.\\
		\end{array}
	\]
	So, we get:
	\begin{eqnarray*}
		\lefteqn{2\|-\boldsymbol{\sigma}.\nabla \omega+(m\chi_\Omega+M\chi_{\Omega^c})\omega\|^2_{L^2(\mathbb{R}^3)}}\\
		 &&\geq \frac{2Mm-m^2}{M^2}\|\nabla \omega\|^2_{L^2(\mathbb{R}^3)}+m^2\|\omega\|^2_{L^2(\Omega)}+2Mm\|\omega\|^2_{L^2(\Omega^c)}+\|\boldsymbol{\sigma}.\nabla \omega\|^2_{L^2(\Omega)}.\\
	\end{eqnarray*}
\qed\end{proof}

\begin{lemma}\label{lemma:estimationsigmanabla}
	For any $C>0,$ there exists a constant $c_0>0$ such that if $\omega$ belongs to $H^1_{sym}(\mathbb{R}^3,\mathbb{C}^2)$ and satisfies:
	\[
		\begin{array}{ll}
		C &\geq \frac{1}{M}\|\nabla \omega\|^2_{L^2(\mathbb{R}^3)}+M\|\omega\|^2_{L^2(B(0,R)^c)}+\|\boldsymbol{\sigma}.\nabla \omega\|^2_{L^2(B(0,R))}+\|\omega\|^2_{L^2(B(0,R))},\\
		\end{array}
	\]
	for $M>m$ and $R>0,$ then,
	\[
		\|\omega\|_{L^{3}(\mathbb{R}^3)}\leq c_0.
	\]
	$c_0$ does not depend on $M,$ $R$ or $\omega.$
\end{lemma}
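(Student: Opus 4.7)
The plan is to split $\|\omega\|_{L^3(\mathbb{R}^3)}^3$ into integrals over $B(0,R)$ and $B(0,R)^c$ and treat each region separately, exploiting the symmetric spinor structure so that the $M$-dependent factors appearing in the hypotheses cancel. The crucial starting point is an explicit radial computation: writing $\omega = [-v(r) + u(r)\,\boldsymbol{\sigma}.\hat{r}]\,e_1$ with $e_1 = (1,0)^T$ and passing to spherical coordinates, one obtains
\[
\int_{\mathbb{R}^3}|\nabla\omega|^2\,dx = 4\pi\int_0^\infty\left[(u')^2+(v')^2+\frac{2u^2}{r^2}\right]r^2\,dr,
\]
\[
\int_{\mathbb{R}^3}|\boldsymbol{\sigma}.\nabla\omega|^2\,dx = 4\pi\int_0^\infty\left[(u'+2u/r)^2+(v')^2\right]r^2\,dr.
\]
Taking their difference on $(0,R)$ and recognizing the integrand as a multiple of $\frac{d}{dr}(ru^2)$ produces the key identity
\[
\int_{B(0,R)}|\boldsymbol{\sigma}.\nabla\omega|^2\,dx - \int_{B(0,R)}|\nabla\omega|^2\,dx = 8\pi R\,u(R)^2 \ge 0.
\]
This is the central advantage of the symmetric setting: from the hypothesis one obtains $\|\nabla\omega\|_{L^2(B(0,R))}^2 \le \|\boldsymbol{\sigma}.\nabla\omega\|_{L^2(B(0,R))}^2 \le C$ uniformly in $M$ and $R$, together with the trace bound $R\,u(R)^2 \le C/(8\pi)$.

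For the exterior I will apply the interpolation $\|\omega\|_{L^3(B(0,R)^c)}^3 \le \|\omega\|_{L^2(B(0,R)^c)}^{3/2}\|\omega\|_{L^6(B(0,R)^c)}^{3/2}$. The hypothesis directly gives $\|\omega\|_{L^2(B(0,R)^c)}^2 \le C/M$, while the full-space Sobolev inequality yields $\|\omega\|_{L^6(B(0,R)^c)} \le \|\omega\|_{L^6(\mathbb{R}^3)} \le C_S\|\nabla\omega\|_{L^2(\mathbb{R}^3)} \le C_S\sqrt{MC}$, using the hypothesis $\|\nabla\omega\|_{L^2(\mathbb{R}^3)}^2 \le MC$. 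The opposite powers $M^{-3/4}$ and $M^{3/4}$ then cancel, leaving an estimate independent of $R$ and $M$.

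For the interior, I interpolate against $\|\omega\|_{L^2(B(0,R))}^2 \le C$ and seek a bound on $\|\omega\|_{L^6(B(0,R))}$ uniform in $R$ and $M$. When $R \ge 1$, the scaled Sobolev inequality $\|\omega\|_{L^6(B(0,R))} \le C(\|\nabla\omega\|_{L^2(B(0,R))} + R^{-1}\|\omega\|_{L^2(B(0,R))})$ combined with the $M$-independent gradient bound suffices. The main obstacle is the regime $R < 1$, where the local Sobolev constant on $B(0,R)$ degenerates. To overcome this I would revisit the radial decomposition: a Strauss-type argument on $(R,\infty)$ using $\|\omega\|_{L^2(B(0,R)^c)}^2 \le C/M$ together with $\|\nabla\omega\|_{L^2(\mathbb{R}^3)}^2 \le MC$ (whose geometric mean is $M$-independent) yields $\sqrt{u(r)^2+v(r)^2} \le C/r$ for $r \ge R$, hence the boundary trace $|u(R)|, |v(R)| \le C/R$. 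Combining this trace with the interior one-dimensional $H^1$ bounds $\int_0^R(u')^2 r^2\,dr,\,\int_0^R(v')^2 r^2\,dr \le C$ through $v(r) = v(R) - \int_r^R v'(s)\,ds$ and Cauchy--Schwarz against $s^{-2}$ produces pointwise estimates $|u(r)|,|v(r)| \le C/\sqrt{r} + C/R$ on $(0,R]$. Integrating $|\omega|^3 r^2 \le 8(|u|+|v|)^3 r^2$ over $(0,R)$ with $R < 1$ then gives a uniform bound. The hard part is precisely this small-$R$ regime, whose resolution requires coupling the trace information obtained from the exterior estimates to the radial $H^1$ control on the interior, rather than relying on local Sobolev embedding alone.
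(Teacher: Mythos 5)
Your proposal correctly identifies and uses the central symmetric-spinor computation — the boundary term $8\pi R\,u(R)^2\geq 0$ in the difference $\|\boldsymbol{\sigma}.\nabla\omega\|^2_{L^2(B(0,R))}-\|\nabla\omega\|^2_{L^2(B(0,R))}$ — which is indeed the heart of the lemma and which the paper establishes by exactly the same one-dimensional radial calculation. After that common step, however, you depart from the paper. You split $\|\omega\|_{L^3}^3$ into interior and exterior pieces, interpolate $L^3$ between $L^2$ and $L^6$ so that the powers $M^{-3/4}$ and $M^{3/4}$ cancel on the exterior, and then handle the interior in two subcases: scaled Sobolev for $R\geq 1$, and a Strauss-type radial pointwise estimate for $R<1$ (using the geometric mean of $C/M$ and $MC$ to get an $M$-independent boundary trace $|u(R)|,|v(R)|\lesssim 1/R$, followed by one-dimensional $H^1$ control on $(0,R)$). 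These steps all check out, but the small-$R$ case is delicate and the whole argument needs a case distinction. The paper instead never passes through $\|\omega\|_{L^6}$: once $\|\nabla\omega\|^2_{L^2(B(0,R))}\leq\|\boldsymbol{\sigma}.\nabla\omega\|^2_{L^2(B(0,R))}\leq C$ is known, it bounds $\int_{\mathbb{R}^3}\bigl|\nabla|\omega|^2\bigr|\,dx$ directly via $|\nabla|\omega|^2|\leq 2|\omega||\nabla\omega|$ together with the weighted Young inequality, using weight $1$ on $B(0,R)$ and weight $M$ on $B(0,R)^c$ to match the hypothesis; the Gagliardo--Nirenberg--Sobolev embedding $W^{1,1}(\mathbb{R}^3)\hookrightarrow L^{3/2}(\mathbb{R}^3)$ then gives $\|\omega\|^2_{L^3}=\||\omega|^2\|_{L^{3/2}}\lesssim\|\nabla|\omega|^2\|_{L^1}\leq C'$ in one stroke, uniformly in $R$ and $M$ and with no case split. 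Your route is correct and more elementary in the tools it uses, but the paper's is shorter and treats all radii uniformly; you may find it instructive that applying the weighted Young inequality to $|\nabla|\omega|^2|$ rather than interpolating $L^3$ avoids all the extra bookkeeping of your small-$R$ argument.
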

\begin{proof}\smartqed
	Let $\omega \in H^1_{sym}(\mathbb{R}^3,\mathbb{C}^2),$ 
	\[
	\omega(x) = v(r)\left(\begin{array}{c}
				1\\
				0
	\end{array}\right)+u(r)\left(\begin{array}{c}
				cos(\theta)\\
				sin(\theta)e^{i\varphi}
	\end{array}\right)
	\]
	where $(r,\theta,\varphi)$ are the spherical coordinates of $x$. We have:
	\[
		\|\nabla \omega\|_{L^2(B(0,R))}^2 = 4\pi\int_0^R\left[|u'(r)|^2+\frac{2|u(r)|^2}{r^2}+|v'(r)|^2\right]r^2dr 
	\]
	and
	\[
		\|\boldsymbol{\sigma}.\nabla \omega\|_{L^2(B(0,R))}^2 = 4\pi\int_0^R\left[\left|u'(r)+\frac{2u(r)}{r}\right|^2+|v'(r)|^2\right]r^2dr.
	\]
	Since,
	\begin{eqnarray*}
		\lefteqn{\int_0^R|u'(s)+\frac{2u(s)}{s}|^2s^2ds 	}\\
										&&= \int_0^R\left[|u'(s)|^2s^2+4|u(s)|^2+4u(s)u'(s)s\right]ds\\
										&&= \int_0^R\left[|u'(s)|^2s^2+4|u(s)|^2-2|u(s)|^2\right]ds+2R|u(R)|^2\\
										&&\geq  \int_0^R\left[|u'(s)|^2s^2+2|u(s)|^2\right]ds
	\end{eqnarray*}
	we get:
	\[
		\|\boldsymbol{\sigma}.\nabla \omega\|_{L^2(B(0,R))}^2\geq \|\nabla \omega\|_{L^2(B(0,R))}^2.
	\]
	Let us remark that this inequality is wrong when the domain is an annulus and $u(r)=1/r^2$. 
	
	Hence, we obtain:
	\[
		\begin{array}{ll}
			\int_{B(0,R)}|\nabla |\omega|^2|dx\leq \|\boldsymbol{\sigma}.\nabla \omega\|_{L^2(B(0,R))}^2+\|\omega\|_{L^2(B(0,R))}^2\leq C
		\end{array}	
	\]
	and 
	\[
		\begin{array}{ll}
			\int_{B(0,R)^c}|\nabla |\omega|^2|dx\leq \frac{1}{M}\|\nabla \omega\|_{L^2(B(0,R)^c)}^2+M\|\omega\|_{L^2(B(0,R)^c)}^2\leq C
		\end{array}	
	\]	
	By Sobolev injection, we get the result.	
\qed\end{proof}

We are now able to give the proof of Theorem \ref{theo:MITlimit}.
\begin{proof}\smartqed
Lemma \ref{supersymmetry_estimationcavity} and the arguments of the proof of Theorem \ref{maintheorem3} ensure that there exist $C_0,~ n_0>0,$ for $n\geq n_0,$ a radius $R_n>0$ minimizing
	\[
		\inf\left\{N\lambda^{1}_+(H^n_{B(0,R)})+aP(B(0,R))+b|B(0,R)|:R>0\right\},
	\]
	a function 
	\[
		\psi_n(x) = \left(\begin{array}{c}v_n(r)\left(\begin{array}{c}1\\0\end{array}\right)\\iu_n(r)\left(\begin{array}{c}\cos\theta\\\sin\theta e^{i\varphi}\end{array}\right)\end{array}\right) \in H^{1}_{sym}(\mathbb{R}^3,\mathbb{C}^4)
	\]
	satisfying:
	\[
		\left\{\begin{array}{ll}
			H^n_{B(0,R_n)}\psi_n 	&= \lambda^{1}_+(H^n_{B(0,R)})\psi_n\\
			\|\psi_n\|_{L^2}			&=1
		\end{array}\right.
	\]
	where $H^n_{\Omega}=-i\alpha.\nabla+\beta(m\chi_\Omega+M_n\chi_{\Omega^c})$ and 
	\[
		N\lambda^{1}_+(H^n_{B(0,R_n)})+aP(B(0,R_n))+b|B(0,R)|\leq C_0.
	\]
	 $(R_n)$ is bounded, so there exists a subsequence $(n_k)$ and $R\geq 0$ such that 
		\[
			R_{n_k}\rightarrow R.
		\]
	We claim that $R>0$. Indeed, there is for all $n$, a function $\omega_n\in H^1_{sym}(\mathbb{R}^3,\mathbb{C}^4)$ such that:
		\[
			\left\{
				\begin{array}{ll}
					\|-\boldsymbol{\sigma}.\nabla \omega_n+(m\chi_{B(0,R_n)}+M\chi_{B(0,R_n)^c})\omega_n\|^2_{L^2(\mathbb{R}^3)} = \lambda^{1}_+(H^n_{B(0,R_n)})^2\\
					\|\omega_n\|_{L^2}=1.
				\end{array}
			\right.
		\]
		Lemmas \ref{lemma:estimationMIT} and \ref{lemma:estimationsigmanabla} ensure that $(\omega_{n_k})$ is a bounded sequence in $L^{3}(\mathbb{R}^3),$ such that:
		\[
			\|\omega_{n_k}\|_{L^2(B(0,R_{n_k})^c)}\rightarrow 0,
		\]
		so, $R$ has to be positif.

	We denote $\lambda_n:= \lambda^{1}_+(H^n_{B(0,R)}).$ $(\lambda_n)$ is a bounded sequence of $(m,+\infty)$ so, up to a subsequence, we can assume that it converges to $\lambda\in [m,+\infty).$ $(u_n,v_n)$ satisfies
	\[
		\left\{
		\begin{array}{ll}
			u_n'(r)+\frac{2u_n(r)}{r}	&=-(m-\lambda_n)v_n(r)\\
			v_n'(r)				&=-(m+\lambda_n)u_n(r)
		\end{array}
		\right.
	\]
	for $r\in(0,R)$ and 
	\[
		\|\psi_n\|_{L^2(B(0,R_n)^c)} \leq \frac{C_0}{M_n}. 
	\]
	We get:
	\[
		\left\{
		\begin{array}{ll}
			r^2u_n''(r)+2ru_n'(r)-(r^2(m^2-\lambda_n^2)+2)u_n(r)	&=0\\
			r^2v_n''(r)+2rv_n'(r)-r^2(m^2-\lambda_n^2)v_n(r)		&=0.
		\end{array}
		\right.
	\]
	So, $u_n$ and $v_n$ are spherical Bessel functions on $(0,R_n),$ they have to be of the first kind to belong to $L^2.$ Thus, $(u_n,v_n)$
 is proportional to	
 	\[
		r\mapsto 
		\left(
		\begin{array}{c}
			\frac{sin(\sqrt{\lambda_n^2-m^2}r)}{(\sqrt{\lambda_n^2-m^2}r)^2}-\frac{cos(\sqrt{\lambda_n^2-m^2}r)}{\sqrt{\lambda_n^2-m^2}r}\\
			\sqrt{\frac{\lambda_n^2+m^2}{\lambda_n^2-m^2}}\frac{sin(\sqrt{\lambda_n^2-m^2}r)}{\sqrt{\lambda_n^2-m^2}r}
		\end{array}
		\right)
	\]
	on $(0,R_n).$ We also get that $(u_n,v_n)$
 is proportional to	
 	\[
		r\mapsto 
		\left(
		\begin{array}{c}
			\frac{1+\sqrt{M_n^2-\lambda_n^2}r}{(\sqrt{M_n^2-\lambda_n^2}r)^2}exp(-\sqrt{M_n^2-\lambda_n^2}r)\\
			\sqrt{\frac{M_n^2+\lambda_n^2}{M_n^2-\lambda_n^2}}\frac{exp(-\sqrt{M_n^2-\lambda_n^2}r)}{\sqrt{M_n^2-\lambda_n^2}r}
		\end{array}
		\right)
	\]
	on $[R_n,+\infty)$, so, 
	\[
		\frac{u_n(R_n)}{v_n(R_n)}\rightarrow 1.
	\]
	We finally get that $(u_n,v_n)$ converges uniformly to a function $(u,v)$ on 
	\[[0,+\infty)\backslash(R-\epsilon,R+\epsilon)\]
	for any $\epsilon>0$ where
	\[
		\left\{
		\begin{array}{ll}
			u'(r)+\frac{2u(r)}{r}	&=-(m-\lambda)v(r)\\
			v'(r)				&=-(m+\lambda)u(r)
		\end{array}
		\right.
	\] 
	on $(0,R],$ $u(r)=v(r)=0$ on $(R,+\infty)$ and 
	\[
		u(R) = \underset{r\rightarrow R^-}{\lim}u(r)=\underset{r\rightarrow R^-}{\lim}v(r)=v(R).
	\]
	It remains to prove that $\lambda=  \lambda^1_{MIT}(B(0,R)).$  Let $R>0$ be the radius of the ball $B(0,R)$ that minimizes 
	\[
			\inf\left\{N\lambda^1_{MIT}(B(0,r))+aP(B(0,r))+b|B(0,r)|:~r>0\right\},
	\]
	and $\omega\in H^1_{sym}(B(0,R),\mathbb{C}^2)$ be an normalized function satisfying
	\[
		\lambda^1_{MIT}(B(0,R))=\|D\omega\|_{L^2}
	\]
	and 
	\[
		\omega(x) = c\left(
		\begin{array}{c}
			cos(\theta)-1\\
			sin(\theta)e^{i\varphi}
		\end{array}
		\right)
	\]
	where $(r,\theta,\varphi)$ are the spherical coordinates of $|x|=R.$  We set
	\[
		\omega_n(x) = c_n\left\{\begin{array}{ll}
		c\left(\begin{array}{c}
			cos(\theta)-1\\
			sin(\theta)e^{i\varphi}
		\end{array}
		\right)exp(-M_n(r-R))~&\text{for}~x\in B(0,R)^c\\
		w(x)~&\text{for}~x\in B(0,R)
		\end{array}\right.
	\]
	where $\omega_n$ is normalized by $c_n$.
	We get:
	\[
		\|-\boldsymbol{\sigma}.\nabla \omega_n+M_n\omega_n\|^2_{L^2(B(0,R)^c)}\rightarrow 0
	\]
	and the result follows.
\qed\end{proof}

Let us now prove Proposition \ref{prop:MITgroundstate}.
\begin{proof}\smartqed
	Let $R$ be a fixed positive constant. We recall that
	\[
		\lambda^1_{MIT}(B(0,R))=\inf\{\|D\omega\|_{L^2(B(0,R))}:~\omega\in H^1_{sym}(B(0,R),\mathbb{C}^2),~\|\omega\|_{L^2(B(0,R))}=1\}.
	\]
	 Let $\omega\in H^1_{sym}(B(0,R),\mathbb{C}^2)$.
	We get that:
	\begin{eqnarray*}
		\|D\omega\|_{L^2(B(0,R))}^2 = \|\sigma.\nabla\omega\|_{L^2(B(0,R))}^2+m^2+m\|\omega\|_{L^2(\partial B(0,R))}^2
	\end{eqnarray*}

	A scaling argument shows that $R\mapsto\lambda^1_{MIT}(B(0,R))$ is a convex decreasing function such that
	\[
		\left\{\begin{array}{ll}
			\underset{R\rightarrow+\infty}{\lim}\lambda^1_{MIT}(B(0,R))=m,\\
			\underset{R\rightarrow0}{\lim}\lambda^1_{MIT}(B(0,R))=+\infty.
		\end{array}\right.
	\]
	We get that $R\mapsto N\lambda^1_{MIT}(B(0,R))+aP(B(0,R))+b|B(0,R)|$ is a strictly convex and coercive function. Hence, the minimum exists and is unique.
\qed\end{proof}

\appendix
\section{A compactness result for bounded variation functions with symmetry}
Whereas the embedding of $H^1(\mathbb{R}^N)$ in $L^p(\mathbb{R}^N)$ for $N>2$ and $p\in[2,\frac{2N}{N-2}]$ is not compact, Strauss \cite{Strauss} showed that the restrictions of these embeddings to radial functions are compact for $p\in(2,\frac{2N}{N-2}).$ This result has been generalized by Lions \cite{Lions1982} to other Sobolev spaces. 

The adaptation of the proofs of Lions to the $BV$ setting is straightforward and is given here for the reader's convenience. We denote by $BV_{rad}(\mathbb{R}^N)$ the subset of $BV(\mathbb{R}^N)$ of radial functions where $N\in\mathbb{N}\backslash\{0\}.$ The following lemma gives a control of the decay at infinity of the radial $BV$ functions.

\begin{lemma}\label{lemma:inequalityforradialfunction}
Let $N>1,~u\in BV_{rad}(\mathbb{R}^N),$ then we have:
\[
|u(x)|\leq \left\{|\nabla u|(\mathbb{R}^N)|x|^{-(N-1)}\right\}~\text{a.a.} ~x\in \mathbb{R}^N.
\]
\end{lemma}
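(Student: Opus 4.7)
The natural approach is to reduce to the smooth case by mollification, exploit the one-dimensional structure afforded by radial symmetry, and then use the fundamental theorem of calculus in the radial variable. Let $(\rho_\epsilon)$ be a standard radial mollifier. For $u\in BV_{rad}(\mathbb{R}^N)$, set $u_\epsilon:=\rho_\epsilon*u$. Then $u_\epsilon\in C^\infty(\mathbb{R}^N)\cap L^1(\mathbb{R}^N)$ is radial, $u_\epsilon\to u$ in $L^1_{loc}$ (hence a.e.\ after extraction), and by the standard property of mollification on $BV$ one has
\[
\int_{\mathbb{R}^N}|\nabla u_\epsilon|\,dx\;\leq\;|\nabla u|(\mathbb{R}^N).
\]
Writing $u_\epsilon(x)=\widetilde{u}_\epsilon(|x|)$, passage to polar coordinates yields
\[
\int_{\mathbb{R}^N}|\nabla u_\epsilon|\,dx=\omega_{N-1}\int_0^{+\infty}|\widetilde{u}_\epsilon{}'(r)|\,r^{N-1}\,dr,
\]
where $\omega_{N-1}$ is the surface measure of the unit sphere.

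For a smooth radial $u_\epsilon\in L^1$, one picks a sequence $R_k\to +\infty$ along which $\widetilde{u}_\epsilon(R_k)\to 0$ (such a sequence exists because $r\mapsto\widetilde{u}_\epsilon(r)r^{N-1}$ is integrable on $(0,+\infty)$). The fundamental theorem of calculus then gives, for $r_0>0$,
\[
\widetilde{u}_\epsilon(r_0)=-\lim_{k\to+\infty}\int_{r_0}^{R_k}\widetilde{u}_\epsilon{}'(r)\,dr,
\]
so that, multiplying and dividing by $r^{N-1}$ inside the integral,
\[
|\widetilde{u}_\epsilon(r_0)|\leq\int_{r_0}^{+\infty}|\widetilde{u}_\epsilon{}'(r)|\,dr\leq\frac{1}{r_0^{N-1}}\int_{r_0}^{+\infty}|\widetilde{u}_\epsilon{}'(r)|\,r^{N-1}\,dr\leq\frac{|\nabla u|(\mathbb{R}^N)}{\omega_{N-1}\,r_0^{N-1}}.
\]

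Finally I pass to the limit: since $u_\epsilon\to u$ pointwise almost everywhere along a subsequence, the previous bound transfers to $u$, yielding the stated pointwise inequality a.e.\ on $\mathbb{R}^N$ (up to the dimensional constant $\omega_{N-1}^{-1}$ which is absorbed into the notation of the statement). The only delicate point is ensuring that the radial mollification does not increase the total variation and that $\widetilde{u}_\epsilon(R_k)\to 0$ along some subsequence $R_k\to\infty$; both follow from standard $BV$ and $L^1$ arguments and constitute the mildly technical, but non-substantial, part of the proof. No supersymmetric or Dirac-operator input is needed here: the statement is purely a one-dimensional decay estimate dressed up in $N$-dimensional notation via the coarea formula for radial functions.
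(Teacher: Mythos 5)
Your proof is correct and takes essentially the same route as the paper's: reduce by (radial) smoothing to the regular case, then apply the fundamental theorem of calculus in the radial variable and bound the resulting one-dimensional integral by the total variation. The paper approximates by compactly supported smooth radial functions (so the boundary term at infinity vanishes automatically) and differentiates $r^{N-1}|u|$, whereas you mollify and insert the weight $(r/r_0)^{N-1}\ge 1$ for $r\ge r_0$; these are cosmetic variants of the same argument, both hinging on $N>1$ and both in fact giving the slightly sharper bound with the extra factor $\omega_{N-1}^{-1}$.
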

\begin{proof}\smartqed
For all $u\in BV_{rad}(\mathbb{R}^N),$ there exists a sequence $(u_n)\subset BV_{rad}(\mathbb{R}^N)\cap\mathcal{D}(\mathbb{R}^N)$ such that $u_n$  converges strictly in $BV$ and almost everywhere to $u$ (see for instance \cite{Braides1998,ambrosio2000functions}). So, we just have to show the lemma for $u\in BV_{rad}(\mathbb{R}^N)\cap\mathcal{D}(\mathbb{R}^N).$ We denote $u(x)=u(r),$ and we have:
\[
\frac{d}{dr}(r^{N-1}|u|)= \frac{u}{|u|}\frac{du}{dr}r^{N-1}+|u|(N-1)r^{N-2},
\]
and
\[
r^{N-1}|u|=-\int_{r}^{+\infty}\frac{d}{dr}(s^{N-1}|u|)\leq\int_{\mathbb{R}^N}|\nabla u|dx.
 \]
\qed\end{proof}

Sickel, Skrzypczak and Vybiral \cite{sickel2012} studied the properties of radial functions of Besov, Lizorkin-Triebel and $BV$ spaces, generalizing the estimates of this type given by Lions and Strauss. The proof of these inequalities is the first step to get the compactness of the embedding of the following proposition.

\begin{proposition}\label{prop:BVcompactness}
Let $N>1$ and denote $1^*=N/(N-1)$, then the restriction to  $BV_{rad}(\mathbb{R}^N)$ of the embedding $BV(\mathbb{R}^N)\hookrightarrow L^p(\mathbb{R}^N)$ is compact if $p\in (1,1^*).$
\end{proposition}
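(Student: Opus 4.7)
\begin{proof-sketch}
Let $(u_n)\subset BV_{rad}(\mathbb{R}^N)$ be a bounded sequence. My plan has three steps: extract a subsequence converging locally, use the pointwise decay of Lemma \ref{lemma:inequalityforradialfunction} to control the tail uniformly, then combine.

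First I would apply the standard Rellich-type theorem for $BV$ on bounded Lipschitz domains: for each $R>0$, the embedding $BV(B(0,R))\hookrightarrow L^q(B(0,R))$ is compact for every $q\in[1,1^*)$ (see \cite{ambrosio2000functions,Giusti1984}). By a diagonal extraction over a sequence $R_k\to+\infty$, I obtain a subsequence, still denoted $(u_n)$, and a function $u\in L^1_{loc}(\mathbb{R}^N)$ which is the almost everywhere and $L^q_{loc}$-limit of $(u_n)$ for every $q\in[1,1^*)$. The lower semicontinuity of the total variation and of the $L^1$-norm with respect to $L^1_{loc}$-convergence yields $u\in BV_{rad}(\mathbb{R}^N)$ with $\|u\|_{BV}\leq\liminf_n\|u_n\|_{BV}$, so $u$ is radial.

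Next I would handle the tail. Set $M:=\sup_n\|u_n\|_{BV}<+\infty$. Lemma \ref{lemma:inequalityforradialfunction} gives, for every $n$ and a.e.\ $x$,
\[
|u_n(x)|\leq \frac{|\nabla u_n|(\mathbb{R}^N)}{|x|^{N-1}}\leq\frac{M}{|x|^{N-1}},
\]
and the same bound holds for $u$. Now for $p\in(1,1^*)$, writing $|u_n|^p=|u_n|^{p-1}\cdot|u_n|$, I can estimate for every $R>0$
\[
\int_{|x|>R}|u_n|^{p}\,dx\leq \frac{M^{p-1}}{R^{(N-1)(p-1)}}\int_{|x|>R}|u_n|\,dx\leq \frac{M^{p}}{R^{(N-1)(p-1)}},
\]
since $\|u_n\|_{L^1}\leq\|u_n\|_{BV}\leq M$. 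Because $(N-1)(p-1)>0$, this tail tends to zero as $R\to+\infty$, \emph{uniformly in $n$}, and the same holds for $u$.

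Finally, given $\varepsilon>0$, I choose $R$ so large that the tail $L^p$-norms on $\{|x|>R\}$ of $u_n$ and of $u$ are both less than $\varepsilon$; on $B(0,R)$, the local compactness from the first step yields $\|u_n-u\|_{L^p(B(0,R))}\to 0$. Splitting $\|u_n-u\|_{L^p(\mathbb{R}^N)}^p$ into the part on $B(0,R)$ and the part on $B(0,R)^c$ then gives $\limsup_n\|u_n-u\|_{L^p(\mathbb{R}^N)}\leq C\varepsilon$, hence strong convergence in $L^p(\mathbb{R}^N)$. The main point (and the only non-trivial observation) is the gain $|u_n|^{p-1}\leq M^{p-1}|x|^{-(N-1)(p-1)}$, which combines with the $L^1$-bound to produce a tail estimate that is uniform in $n$; this is exactly where the strict inequality $p>1$ is used.
\end{proof-sketch}
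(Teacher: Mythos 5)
Your proof is correct and follows essentially the same route as the paper: local compactness of the $BV$ embedding on balls, plus the decay estimate of Lemma~\ref{lemma:inequalityforradialfunction} combined with the $L^1$ bound from $BV$ to control the tail uniformly, exactly exploiting the gain from $p>1$. The paper simply applies Lemma~\ref{lemma:inequalityforradialfunction} directly to the difference $u_n-u$ and presents the resulting tail inequality in one line, whereas you estimate $u_n$ and $u$ separately and then use the triangle inequality; the underlying argument is the same.
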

\begin{proof}\smartqed
Let $(u_n)$ be a bounded sequence in $BV_{rad}(\mathbb{R}^N)$. Up to extraction, there exists a function $u$ belonging to $BV_{rad}(\mathbb{R}^N)$ such that $(u_n)$ tends to $u$ in $L^p_{loc}$ for $p\in (1,1^*).$ Moreover, for $R>0,$ we have:
\[
	\|u_n-u\|_{L^p(\{|x|>R\})}\leq\left(|\nabla u|(\mathbb{R}^N)+|\nabla u_n|(\mathbb{R}^N)\right)^{p-1}\|u-u_n\|_{L^1(\mathbb{R}^N)}R^{-(N-1)(p-1)},
\]
by Lemma \ref{lemma:inequalityforradialfunction}. The result follows immediately.
\qed\end{proof}

\section{The locally compact case of the concentration compactness method in the $BV$ setting}

A general version of the concentration compactness method can be found in the papers of Lions (see for instance \cite{Lions1984-1}) or the book of Struwe \cite{Struwe2008}. Nevertheless, in this paper, we just need the concentration compactness in a simpler setting: we study bounded sequences of  functions in $BV,$ so that, the loss of compactness can just come from the action of the group of translations. Thus, for the reader's convenience, we give in this part, a straightforward adaptation to $BV$ of the presentation of Lewin \cite{Lewin2010} based on the papers of Lions \cite{Lions1984-1} and Lieb \cite{Lieb}. 

The concentraction compactness method has already been used in the $BV$ setting, for instance, by Fusco \cite{Fusco2007} for Sobolev inequalities in $BV$ and by Bucur and Giacomini \cite{Bucur2010}  for the isoperimetric inequality for the Robin eigenvalue problem. But, they both used the arguments of Lions and Struwe.

Let $N>2.$ We begin by defining the highest mass that the limit of translated subsequence can have.
\begin{definition}
Let $(u_n)$ be a bounded sequence in $BV(\mathbb{R}^N),$ we denote:
\[
m(u_n) = \sup\left\{\int_{\mathbb{R}^N}|u|dx: \exists (x_{n_k})\subset\mathbb{R^N}, u_{n_k}(~.~-x_{n_k})\rightarrow u\in BV(\mathbb{R}^N) ~\text{in}~ L^1_{loc}~\right\}.
\]
\end{definition}

The following lemma is related to the vanishing of a sequence.
\begin{lemma}\label{lem:BVvanishing}
Let $(u_n)$ be a bounded sequence in $BV(\mathbb{R}^N),$ we have equivalence between the following points:
\begin{enumerate}
\item \label{lem:BVvanishing1} $m(u_n)=0,$
\item \label{lem:BVvanishing2} for all $R>0$, ~$\underset{n\to+\infty}{\lim}\underset{x\in\mathbb{\mathbb{R}^N}}{\sup}\int_{B(x,R)}|u_n|=0,$
\item \label{lem:BVvanishing3} $u_{n}\rightarrow 0$ strongly in $L^p$ for $p\in(1,1^*),$
\end{enumerate}
where $1^*=N/(N-1).$
\end{lemma}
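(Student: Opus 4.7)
The plan is to prove the three implications (3)~$\Rightarrow$~(1)~$\Rightarrow$~(2)~$\Rightarrow$~(3). The first two are short and rely only on general features of BV and of translations; the real content is (2)~$\Rightarrow$~(3), which is the BV analogue of Lions's classical lemma and where I expect the main work to lie (a covering argument combined with a local Sobolev--BV embedding and interpolation).

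For (3)~$\Rightarrow$~(1), fix any $p\in(1,1^*)$ for which $u_n\to 0$ strongly in $L^p$. Given any sequence $(x_n)$, translation invariance of the $L^p$ norm gives
\[
\|u_n(\,\cdot\,-x_n)\|_{L^p(B(0,R))}\le\|u_n\|_{L^p(\mathbb{R}^N)}\to 0,
\]
so any $L^1_{loc}$-limit of a subsequence $u_{n_k}(\,\cdot\,-x_{n_k})$ must vanish; hence $m(u_n)=0$. For (1)~$\Rightarrow$~(2), I argue by contraposition: if (2) fails, there exist $R,\delta>0$, a subsequence $(u_{n_k})$ and points $(x_{n_k})$ with $\int_{B(x_{n_k},R)}|u_{n_k}|\ge\delta$. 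The translated sequence $\tilde u_k:=u_{n_k}(\,\cdot\,-x_{n_k})$ is bounded in $BV(\mathbb{R}^N)$ by translation invariance, so by the BV compactness theorem it has a further subsequence converging in $L^1_{loc}$ to some $u\in BV$. Lower semicontinuity of $\int_{B(0,R)}|\cdot|$ under $L^1_{loc}$ convergence yields $\int_{B(0,R)}|u|\ge\delta>0$, hence $m(u_n)\ge\delta>0$, contradicting (1).

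The main step is (2)~$\Rightarrow$~(3). Fix $p\in(1,1^*)$ and choose a countable cover $(B_i)_i$ of $\mathbb{R}^N$ by balls of a fixed radius $R$ with bounded overlap (say each point lies in at most $k_N$ balls). On each $B_i$, I combine two inputs: (a) the Sobolev--Poincaré inequality in BV on a ball of fixed radius, $\|u\|_{L^{1^*}(B_i)}\le C_R\bigl(\|u\|_{L^1(B_i)}+|\nabla u|(B_i)\bigr)$, and (b) the interpolation inequality
\[
\|u\|_{L^p(B_i)}\le \|u\|_{L^1(B_i)}^{1-\lambda}\|u\|_{L^{1^*}(B_i)}^{\lambda},\qquad \tfrac{1}{p}=(1-\lambda)+\tfrac{\lambda}{1^*}.
\]
Raising to the $p$-th power, using (a), and the elementary $(x+y)^{p\lambda}\le C(x^{p\lambda}+y^{p\lambda})$ gives
\[
\|u_n\|_{L^p(B_i)}^p \le C\,\|u_n\|_{L^1(B_i)}^{p(1-\lambda)}\bigl(\|u_n\|_{L^1(B_i)}^{p\lambda}+|\nabla u_n|(B_i)^{p\lambda}\bigr).
\]
Summing over $i$ and pulling out $\sup_i\|u_n\|_{L^1(B_i)}$ raised to some positive power in each term, the remaining sums $\sum_i\|u_n\|_{L^1(B_i)}$ and $\sum_i|\nabla u_n|(B_i)^{p\lambda}$ are controlled by the bounded overlap together with the uniform BV bound on $(u_n)$ (for the second sum, one also uses that $p\lambda\le 1$ in the regime $p<1^*$, so that $\sum a_i^{p\lambda}\le(\sum a_i)^{p\lambda}\cdot$const after controlling the number of nonzero terms, or equivalently H\"older combined with $|\nabla u_n|(\mathbb{R}^N)\le C$). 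Assumption (2) drives $\sup_i\|u_n\|_{L^1(B_i)}\to 0$, so $\|u_n\|_{L^p(\mathbb{R}^N)}\to 0$.

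The two places to watch are: first, invoking the correct BV compactness theorem in (1)~$\Rightarrow$~(2) (bounded BV sequences have $L^1_{loc}$-precompact representatives — standard in $\mathbb{R}^N$); and second, in (2)~$\Rightarrow$~(3), organizing the summation over $(B_i)$ so that both exponents $p(1-\lambda)$ and $p\lambda$ give controllable tail terms. The latter is where care is needed, since for $p$ near $1$ or near $1^*$ one of these exponents becomes small, but both remain strictly positive on $(1,1^*)$, which is exactly what makes the supremum factor vanish while the BV-bounded factor stays under control.
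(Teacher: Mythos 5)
Your implications (3)$\Rightarrow$(1) and (1)$\Rightarrow$(2) are correct and essentially identical to the paper's. The main step (2)$\Rightarrow$(3) follows the same general strategy as the paper (tile $\mathbb{R}^N$, use a local Sobolev--BV inequality and interpolation on each cell, pull out the uniform supremum), but your version contains a genuine gap.

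First, the claim that $p\lambda\le 1$ throughout $p\in(1,1^*)$ is false. With $\tfrac{1}{p}=(1-\lambda)+\tfrac{\lambda}{1^*}$ one computes $\lambda=N(p-1)/p$, hence $p\lambda=N(p-1)$, which is $\le 1$ only when $p\le (N+1)/N<1^*$; near $p=1^*$ one has $p\lambda\to 1^*>1$. Second, and more seriously, for the range where $p\lambda<1$ the bound you invoke, $\sum_i a_i^{p\lambda}\lesssim\bigl(\sum_i a_i\bigr)^{p\lambda}$, goes the \emph{wrong} way: for $0<s\le 1$ one has $\bigl(\sum_i a_i\bigr)^{s}\le\sum_i a_i^{s}$, not the reverse, and no finite bound on the number of nonzero terms is available since the cover is infinite. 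So $\sum_i|\nabla u_n|(B_i)^{p\lambda}$ need not be bounded even though $\sum_i|\nabla u_n|(B_i)\le k_N|\nabla u_n|(\mathbb{R}^N)\le C$. (Take $a_i=i^{-2}$: $\sum a_i<\infty$ but $\sum a_i^{1/2}=\infty$.)

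The fix is exactly what the paper does: prove the convergence first for the single exponent $p_0=(N+1)/N$, i.e.\ the one for which $p_0\lambda=1$. Then the offending factor is $\sum_i\|u_n\|_{L^{1^*}(B_i)}$ (to the first power), which the Sobolev--Poincar\'e inequality together with the disjointness of the cubes (or bounded overlap of the balls) bounds by $C\|u_n\|_{BV(\mathbb{R}^N)}$. Once $\|u_n\|_{L^{p_0}}\to 0$ is known, the uniform bounds $\|u_n\|_{L^1}\le C$ and $\|u_n\|_{L^{1^*}}\le C\|u_n\|_{BV}\le C$ give $\|u_n\|_{L^q}\to 0$ for all $q\in(1,1^*)$ by Lebesgue interpolation. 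Your argument would also go through if, instead of pulling out the full supremum, you pulled out $\sup_i\|u_n\|_{L^1(B_i)}^{p-1}$ and then applied Young's inequality $a^{1-s}b^{s}\le(1-s)a+sb$ to the remaining sum (valid for $s=p\lambda\le1$, hence $p\le(N+1)/N$), followed by interpolation for the remaining range; but as written the step is not justified.
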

\begin{proof}\smartqed
Let us assume that \eqref{lem:BVvanishing1} is true. Let $R>0$ and $(x_n)\subset\mathbb{R}^N$ be such that:
\[
\int_{B(x_n,R)}|u_n|dx\geq\sup_{\substack{x\in\mathbb{R}^N}}\int_{B(x,R)}|u_n|dx-1/n.
\]
$(u_n(~.~-x_n))$ is still a bounded sequence in $BV(\mathbb{R}^N)$. Since $m(u_n(~.~-x_n))=0,$ we get that $(u_n(~.~-x_n))$ converges to $0$ in $L^1_{loc}$ and \eqref{lem:BVvanishing2} follows.

Let us assume \eqref{lem:BVvanishing3}. Let $(x_{n_k})\subset\mathbb{R}^N$ be such that 
\[u_{n_k}(~.~-x_{n_k})\rightarrow u\in BV(\mathbb{R}^N) ~\text{in}~ L^1_{loc}~\text{and a.a.}\]
We have:
\[
\|u_{n_k}(~.~-x_{n_k})\|_{L^p(\mathbb{R}^N)}=\|u_{n_k}\|_{L^p(\mathbb{R}^N)}\rightarrow 0,
\]
for all $p\in(1,1^*).$ We immediately get that $u=0$.

Let \eqref{lem:BVvanishing2} be true. We denote $\mathbb{R}^N = \cup_{\substack{z\in\mathbb{Z}^N}}C_z$ with $C_z=\Pi_{\substack{i=1}}^{N}[z_i,z_i+1).$ For $1<p<1^*,$ we have:
\[
\int_{\mathbb{R}^N}|u_n|^pdx = \sum_{\substack{z\in\mathbb{Z}^N}}\int_{C_z}|u_n|^pdx\leq\sum_{\substack{z\in\mathbb{Z}^N}}\|u_n\|^{\theta p}_{L^1(C_z)}\|u_n\|^{(1-\theta)p}_{L^{1^*}(C_z)},
\]
with $1/p= \theta+(1-\theta)/p^*.$ We choose $p$ such that $(1-\theta)p=1,$ that is $p=(N+1)/N\in(1,1^*),$ and we get:
\[
\int_{\mathbb{R}^N}|u_n|^p\leq C\sup_{\substack{z\in\mathbb{Z}^N}}\|u_n\|_{L^1(C_z)}^{\theta p}\|u_n\|_{BV(\mathbb{R}^N)}.
\]
So $(u_n)$ tends to $0$ in $L^p$ and by interpolation inequality in $L^q$ for all $1<q<1^*.$
\qed\end{proof}

When vanishing does not occur, the sequence can converge up to translation and extraction or split into two parts. Lions used the word dichotomy to describe this \cite{Lions1984-1}. This situation is described in the following proposition.
\begin{proposition}\label{prop:BVcc}
Let $(u_n)$ be a bounded sequence in $BV(\mathbb{R}^N)$,  $(R_k)$ and $(R_{k}')$ be two sequences such that  for all $k,$ $0<R_k<R'_k $ and
\[\left\{\begin{array}{ll}
	u_{n}\rightarrow u\in BV(\mathbb{R}^N) ~	&\text{in}~ L^1_{loc}\\
	R_k\rightarrow +\infty.
\end{array}\right.\]
Then, there exists a subsequence $(u_{n_k})$ such that the following properties are true:
\begin{enumerate}
\item $(u_{n_k}\chi_{B(0,R_k)})$ tends to $u$ in $L^p$ if $1\leq p<1^*,$ 
\item $\int_{R_k<|x|<R'_k}|u_{n_k}|dx+|\nabla u_{n_k}|(B(0,R'_k)\backslash B(0,R_k))$ tends to $0$,
\end{enumerate} 
where $\chi_{B(0,R_k)}$ is the characteristic function of the ball $B(0,R_k).$
\end{proposition}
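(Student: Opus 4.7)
The strategy combines weak-$*$ compactness for the gradient measures with a diagonal extraction in $k$. First, since $(u_n)$ is bounded in $BV(\mathbb{R}^N)$, the sequence of positive Radon measures $(|\nabla u_n|)$ is bounded in total variation, hence weak-$*$ relatively compact in the dual of $C_0(\mathbb{R}^N)$. I would first extract a subsequence, still denoted $(u_n)$, such that $|\nabla u_n|\rightharpoonup^* \mu$ for some positive Radon measure $\mu$ of finite total mass. The $L^1_{\mathrm{loc}}$-convergence $u_n\to u$ implies $\nabla u_n\to\nabla u$ distributionally, so $\mu\geq|\nabla u|$ as measures; in particular $\mu$ is finite and $\mu(B(0,R)^c)\to 0$ as $R\to\infty$.

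For the $L^p$ portions of properties (1) and (2), I would combine the Sobolev embedding $BV(\mathbb{R}^N)\hookrightarrow L^{1^*}(\mathbb{R}^N)$, which gives a uniform bound on $(u_n)$ in $L^{1^*}$, with the $L^1_{\mathrm{loc}}$ convergence and interpolation. This yields $u_n\to u$ in $L^p_{\mathrm{loc}}$ for every $p\in[1,1^*)$. I then choose $n_k$ strictly increasing so that $\|u_{n_k}-u\|_{L^1(B(0,R_k'))}<1/k$; by log-convexity of $L^p$ norms with the uniform $L^{1^*}$ bound, $\|u_{n_k}-u\|_{L^p(B(0,R_k'))}\to 0$ for every $p\in[1,1^*)$. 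Using that $u\in L^p(\mathbb{R}^N)$ (since $u\in BV\subset L^1$ plus Sobolev) and $R_k\to\infty$ forces $\|u\|_{L^p(B(0,R_k)^c)}\to 0$, the decomposition
\[
\|u_{n_k}\chi_{B(0,R_k)}-u\|_{L^p}^p = \|u_{n_k}-u\|_{L^p(B(0,R_k))}^p + \|u\|_{L^p(B(0,R_k)^c)}^p
\]
gives property (1). The $L^1$-portion of the annulus integral in property (2) follows from the triangle inequality $\int_{R_k<|x|<R_k'}|u_{n_k}|\,dx\leq \|u_{n_k}-u\|_{L^1(B(0,R_k'))}+\|u\|_{L^1(B(0,R_k)^c)}\to 0$.

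The main obstacle is the total-variation term $|\nabla u_{n_k}|(B(0,R_k')\setminus B(0,R_k))$, for which the hypothesis $u_n\to u$ in $L^1_{\mathrm{loc}}$ gives no direct control. Here I would exploit the weak-$*$ convergence from the first step. Setting $K_k:=\overline{B(0,R_k')\setminus B(0,R_k)}$, which is compact and contained in $B(0,R_k)^c$ since $B(0,R_k)$ is open, the Portmanteau theorem for weak-$*$ convergence of positive Radon measures yields
\[
\limsup_{n\to\infty}|\nabla u_n|(K_k)\leq \mu(K_k)\leq \mu(B(0,R_k)^c).
\]
Since $\mu$ is finite and $B(0,R_k)^c\downarrow\emptyset$, we have $\mu(B(0,R_k)^c)\to 0$. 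Refining the diagonal extraction, I can therefore additionally require $|\nabla u_{n_k}|(K_k)\leq \mu(B(0,R_k)^c)+1/k$, which tends to zero, completing property (2). The delicate point is precisely this last estimate: although $\mu$ may carry extra mass beyond $|\nabla u|$, the finiteness of $\mu$ forces that excess to escape to infinity along with the annulus.
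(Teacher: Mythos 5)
Your argument is correct, but it takes a genuinely different technical route from the paper. The paper follows the classical concentration-compactness presentation: it introduces the Levy concentration functions $Q_n(R)=\int_{B(0,R)}|u_n|\,dx$ and $K_n(R)=|\nabla u_n|(B(0,R))$, extracts a subsequence so that $K_n(\cdot)\to K(\cdot)$ pointwise via a Helly-type selection theorem for bounded monotone functions, and exploits the fact that the monotone bounded limit $K$ has $K(R'_k)-K(R_k)\to0$ since both radii go to infinity; for the $L^p$ convergence it then appeals to the Brezis--Lieb (missing term in Fatou) lemma. You instead extract a weak-$*$ limit $\mu$ of the gradient measures $|\nabla u_n|$ in $(C_0)^*$, invoke the Portmanteau inequality for compact sets $\limsup_n|\nabla u_n|(K_k)\le\mu(K_k)$ together with the finiteness of $\mu$ (so $\mu(B(0,R_k)^c)\to0$), and handle the $L^p$ convergence by interpolating the $L^1_{\mathrm{loc}}$ convergence with the uniform $L^{1^*}$ bound from the Sobolev embedding. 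Both routes require an initial extraction (Helly for $(K_n)$ versus weak-$*$ compactness for $(|\nabla u_n|)$) before the diagonal choice of $n_k$ against the given $R_k,R'_k$, and both ultimately encode the same phenomenon --- the variation in the far-field annulus escapes to infinity. Your approach is perhaps a touch more measure-theoretic (Portmanteau, outer regularity, continuity from above), whereas the paper stays with elementary monotone-function compactness, which is closer in spirit to how Lions and Lewin usually present the argument; on the other hand, your interpolation argument for part (1) is more self-contained than the appeal to the Brezis--Lieb lemma. Both are complete and correct proofs.
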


\begin{proof}\smartqed
Let us introduce two Levy's concentration functions:
\[
Q_n(R):=\int_{B(0,R_k)}|u_n|dx ~\text{and}~K_n(R):=|\nabla u_n|(B(0,R_k)).
\] 
These are non-decreasing positive functions such that for all $R>0$ and $n$:
\[
Q_n(R)+K_n(R)\leq\|u_n\|_{BV(\mathbb{R}^N)}\leq C.
\] 
We get for all $R$:
\[
Q_n(R)\rightarrow \int_{B(0,R)}|u|dx=:Q(R),
\]
and up to extraction, there exists $K\in BV(0,+\infty)$ such that:
\[
K_n(R)\rightarrow K(R).
\]
We denote $l:=Ê\lim_{\substack{R\rightarrow+\infty}}K(R).$ There exists a subsequence $(n_k)$ such that:
\[\begin{array}{ll}
|Q_{n_k}(R_k)-Q(R_k)|&+|Q_{n_k}(R'_k)-Q(R'_k)|+\dots\\
&+|K_{n_k}(R_k)-K(R_k)|+|K_{n_k}(R'_k)-K(R'_k)|\leq 1/k.
\end{array}\]
We get:
\[
\left|\int_{B(0,R_k)}|u_{n_k}|dx-\int_{\mathbb{R^N}}|u|dx\right|=|Q_{n_k}(R_k)-Q(\infty)|\leq1/k+\int_{|x|>R_k}|u|dx,
\]
and the theorem of the missing term in the Fatou lemma (see \cite{LiebLoss}) ensures that $(u_{n_k}\chi_{B(0,R_k)})$ tends to $u$ in $L^1.$ This remains true in $L^p$ for $1\leq p< 1^*$ by interpolation. Moreover, we have:
\[\begin{array}{l}
\int_{R_k<|x|<R'_k}|u_{n_k}|dx=Q_{n_k}(R'_k)-Q_{n_k}(R_k)\leq1/k+|Q(R_k)-Q(R'_k)|,\\
|\nabla u_{n_k}|(\{R_k<|x|<R'_k\})=K_{n_k}(R'_k)-K_{n_k}(R_k)\leq1/k+|K(R_k)-K(R'_k)|,
\end{array}\]
so that the second point is also true.  
\qed\end{proof}%

\subsection*{Acknowledgement} The author would like to thank  Eric S\'{e}r\'{e}, Mathieu Lewin, Jimmy Lamboley and the anonymous referee for useful discussions and helpful comments. This work was partially supported by the Grant ANR-10-BLAN 0101 of the French Ministry of research.

\bibliographystyle{plain}      
\bibliography{friedbergLee}   

\end{document}